\documentclass{article}
\pdfoutput=1

\usepackage{graphicx} %
\usepackage{amsmath}
\usepackage{amsthm}
\usepackage{amssymb}   
\usepackage{mathrsfs}  
\usepackage{enumitem}  
\usepackage{fullpage}
\usepackage[colorlinks,
            linkcolor=red,
            anchorcolor=blue,
            citecolor=green
            ]{hyperref}
\usepackage{chngcntr}  
\numberwithin{equation}{section}
            
\usepackage{mathtools}

\usepackage{algorithm}
\usepackage{algorithmic}
\usepackage[numbers]{natbib}

\usepackage{microtype}
\usepackage{graphicx}
\usepackage{subfigure}
\usepackage{booktabs} %
\usepackage{makecell}

\usepackage{hyperref}

\newcommand{\bx}{\mathbf{x}}
\newcommand{\by}{\mathbf{y}}
\newcommand{\bz}{\mathbf{z}}
\newcommand{\ba}{\mathbf{a}}
\newcommand{\bb}{\mathbf{b}}

\newcommand{\bu}{\mathbf{u}}

\newcommand{\bg}{\mathbf{g}}

\newcommand{\gtil}{\widetilde{G}}
\newcommand{\btil}{\widetilde{B}}
\newcommand{\ghat}{\widehat{G}}
\newcommand{\res}{\mathrm{res}}
\newcommand{\out}{\text{out}}
\newcommand{\bsf}{\mathsf{B}}

\newcommand{\E}{\mathbb{E}}

\renewcommand{\phi}{\varphi}

\newcommand{\good}{\mathsf{Good}}
\newcommand{\bad}{\mathsf{Bad}}

\DeclareFontFamily{OT1}{pzc}{}
\DeclareFontShape{OT1}{pzc}{m}{it}{<-> s * [1.200] pzcmi7t}{}
\DeclareMathAlphabet{\mathpzc}{OT1}{pzc}{m}{it}

\DeclareMathOperator{\dist}{dist}

\DeclareMathOperator{\prox}{prox}

\DeclareMathOperator{\gra}{gra}
\newcommand{\id}{\mathrm{Id}}

\usepackage{xcolor,colortbl}

\hypersetup{
  linkcolor  = blue,
  citecolor  = teal,
  colorlinks = true,
}
\makeatletter
\newcommand\blfootnote[1]{%
  \begingroup
  \renewcommand{\@makefntext}[1]{\noindent\makebox[1.8em][r]#1}
  \renewcommand\thefootnote{}\footnote{#1}%
  \addtocounter{footnote}{-1}%
  \endgroup
}
\makeatother

\usepackage[capitalize,noabbrev]{cleveref}

\theoremstyle{plain}
\newtheorem{theorem}{Theorem}[section]

\newtheorem{lemma}[theorem]{Lemma}

\theoremstyle{definition}

\newtheorem{assumption}[theorem]{Assumption}
\theoremstyle{remark}
\newtheorem{remark}[theorem]{Remark}
\theoremstyle{fact}
\newtheorem{fact}[theorem]{Fact}

\usepackage[textsize=tiny]{todonotes}
\usepackage[bottom=1in, top=1in, left=1in, right=1in]{geometry}

\title{Solving Stochastic Variational Inequalities \\ without the Bounded Variance Assumption}
\author{Ahmet Alacaoglu\footnote{Department of Mathematics, University of British Columbia. \url{alacaoglu@math.ubc.ca}} 
\and Jun-Hyun Kim\footnote{University of British Columbia. \url{junhyun@student.ubc.ca}}}
\date{}

\begin{document}

\maketitle

\begin{abstract}
  We analyze algorithms for solving stochastic variational inequalities (VI) without the bounded variance or bounded domain assumptions, where our main focus is min-max optimization with possibly unbounded constraint sets. We focus on two classes of problems: monotone VIs; and structured nonmonotone VIs that admit a solution to the \emph{weak Minty VI}. The latter assumption allows us to solve structured nonconvex-nonconcave min-max problems. For both classes of VIs, to make the expected residual norm less than $\varepsilon$, we show an oracle complexity of $\widetilde{O}(\varepsilon^{-4})$, which is the best-known for constrained VIs. In our setting, this complexity had been obtained with the bounded variance assumption in the literature, which is not even satisfied for bilinear min-max problems with an unbounded domain. We obtain this complexity for stochastic oracles whose variance can grow as fast as the squared norm of the optimization variable.
\end{abstract}

\section{Introduction}
In this work, we focus on stochastic variational inequalities (SVI) where the aim is to
\begin{equation}\label{eq: svi_prob}
\text{find~} \bz^\star \text{~s.t.~} \langle G(\bz^\star), \bz - \bz^\star  \rangle + r(\bz) - r(\bz^\star) \geq 0~~~\forall \bz,
\end{equation}
where $r\colon\mathbb{R}^m \to \mathbb{R}\cup \{ +\infty \}$ is a proper, convex and closed function and $G\colon\mathbb{R}^m\to \mathbb{R}^m$ is an operator. When $r$ is equal to the indicator function of a convex and closed set $C\subset \mathbb{R}^m$, \eqref{eq: svi_prob} reduces to a more well-studied SVI problem with set constraints.
In the stochastic case, we assume that we have an \emph{unbiased} oracle $\gtil$ such that
\begin{equation}\label{eq: unbiased}
\mathbb{E}[\gtil(\bz)] =  G(\bz).
\end{equation}
One common application of SVI is stochastic min-max optimization, formulated as,
\begin{equation}\label{eq: minmax_prob}
\min_{\bx\in \mathbb{R}^d} \max_{\by\in \mathbb{R}^n} f(\bx, \by) + h_1(\bx) - h_2(\by),
\end{equation}
where $h_1, h_2$ are given \emph{regularizers}. Taking $h_1, h_2$ as indicator functions results in constrained min-max optimization.
This problem maps to \eqref{eq: svi_prob} with $\bz=\binom{\bx}{\by}$,
\begin{equation*}
G(\bz)=\binom{\nabla_\bx f(\bx, \by)}{-\nabla_\by f(\bx, \by)} \text{~and~} r(\bz) = h_1(\bx) + h_2(\by).
\end{equation*}
The problem \eqref{eq: minmax_prob} gained significant interest in machine learning recently due to applications in adversarial and robust learning, as well as generative adversarial networks \citep {duchi2021learning, madry2018towards, goodfellow2014generative}. Moreover, \eqref{eq: minmax_prob} is a classical framework for solving constrained optimization problems \citep{bertsekas2014constrained}. For example, given a nonlinear programming problem
\begin{equation}\label{eq: nonlp}
\min_{\bx \in X} p(\bx) \text{~subject to~} q(\bx) \leq 0,
\end{equation} 
the standard approach is to use the Lagrangian duality framework to reformulate this problem as
\begin{equation}\label{eq: lagran_nonlp}
\min_{\bx \in X}\max_{\by \geq 0} p(\bx) + \langle \by, q(\bx) \rangle.
\end{equation} 
It is now easy to see that this problem corresponds to \eqref{eq: minmax_prob}, which itself is a special case of \eqref{eq: svi_prob}.
In addition to min-max optimization, SVI is also a common framework to model  problems arising in game theory, see for example \citep{mertikopoulos:tel-02428077}.\\[2mm]
\textbf{Variance assumptions.} In our stochastic setup, a common assumption in addition to \eqref{eq: unbiased} is that of bounded variance \citep{nemirovski2009robust,robbins1951stochastic}. In particular, $\gtil$ satisfies this assumption if the following holds for a finite $\sigma$:
\begin{equation}\label{eq: bdd_vr}\tag{BV}
\E \| \gtil(\bz) - G(\bz)\|^2 \leq \sigma.
\end{equation}
Despite being standard, it is well-known that this is a restrictive assumption. In particular, let us consider a simple bilinear min-max problem, in view of \eqref{eq: minmax_prob}. In this case, the operator $G$ from \eqref{eq: svi_prob} is linear, and hence \eqref{eq: bdd_vr} fails unless the domains of $h_1, h_2$ are bounded. Boundedness of the domains of $h_1, h_2$ is unfortunately unrealistic, which, for example, can be seen by considering \eqref{eq: nonlp} with linear $p, q$ (which indeed gives us the classical linear programming problem), where the resulting  min-max problem \eqref{eq: lagran_nonlp} has unbounded domains.
We also refer to \cite{iusem2017extragradient} for a discussion about this assumption in our setting.

In this work, we focus on algorithms and convergence analyses for solving SVI under the weaker assumption
\begin{equation}\label{eq: bg_def}
\E \| \gtil(\bz) - G(\bz)\|^2 \leq B^2 \| \bz-\bz^\star \|^2 + \sigma^2,
\end{equation}
and its \emph{equivalent} variant\footnote{The equivalence is trivial to see by using Young's inequality on the right-hand side and adjusting the definitions of constants $B^2, \sigma^2$. See Appendix \ref{app: prelim} (Fact~\ref{lem: bg_equiv}) for details.}
\begin{equation}\label{eq: bg_def2}
\E \| \gtil(\bz) - G(\bz)\|^2 \leq B^2 \| \bz-\bz_0 \|^2 + \sigma^2,
\end{equation}
which are used for example in \cite{gladyshev1965stochastic,iusem2017extragradient,choudhury2023single,alacaoglu2025towards,kotsalis2022simpleii}.
It is simple to see that this assumption is trivially satisfied when $G$ is linear, which corresponds to $f$ in \eqref{eq: minmax_prob} being bilinear. To our knowledge, this is currently the most relaxed variance assumption under which one gets optimal convergence rate and complexity guarantees even for convex minimization, see for example \citep{alacaoglu2025towards,neu2024dealing}.
In particular, this assumption implies the other relaxations of \eqref{eq: bdd_vr}, such as the ones appearing in \cite{khaled2023unified,khaled2022better}.\\[2mm]
\textbf{Notation. }
The subdifferential of the convex function $r$ in \eqref{eq: svi_prob} is denoted as $\partial r$. With this, for notational convenience, we write \eqref{eq: svi_prob} as an \emph{inclusion} problem:
\begin{equation}\label{eq: inc_prob}
\text{find~} \bz^\star \text{~such that~} 0 \in (G+\partial r)\bz^\star.
\end{equation}
Classical references containing the background for this formulation include \cite{bauschke2017convex, rockafellar1976monotone}. Recall the definition of the proximal operator: $\prox_r(\bx)=\arg\min_\bu r(\bu) + \frac{1}{2} \| \bu-\bx\|^2$. We show results on $\bz^{out}$ selected uniformly at random from the iterates.

\textbf{Residual.} Despite the ubiquity of SVI and the well-known limitations of the bounded variance assumption \eqref{eq: bdd_vr}, analysis of algorithms for SVI under \eqref{eq: bg_def}, especially for nonmonotone problems, remained mostly unexplored. This is relevant especially in the case when we do not have bounded domains (since a bounded domain would reduce \eqref{eq: bg_def} to \eqref{eq: bdd_vr}). Unbounded domains are common in min-max optimization, as mentioned earlier. In this setting, we will focus on complexity guarantees on the \emph{residual}, given as
\begin{equation}\label{eq: res_def}
\res(\bz_k):=\dist(0, (G+\partial r)\bz_k) = \min_{\bu \in (G+\partial r)\bz_k} \| \bu\|,
\end{equation}
which is the generalization of the gradient (or operator) norm optimality measure (which is for unconstrained problems). Indeed, one can observe that when $r\equiv 0$, this reduces to $\|G(\bz_k)\|$. For the min-max case, this is nothing but $\sqrt{\| \nabla_\bx f(\bx_k, \by_k) \|^2 + \| \nabla_\by f(\bx_k, \by_k) \|^2}$.

The reason for our focus on the residual is the following: When we have nonconvex-nonconcave problems, the duality gap cannot be used. Even with convex-concave problems, in the case of unbounded domains, the duality gap, which is commonly used for VIs (see \cite{nemirovski2009robust,facchinei2003finite}), is not applicable, since it would require taking a maximum over the domain of $r$ which is unbounded. 

A common workaround is the \emph{so-called} restricted duality gap \citep{nesterov2007dual}. The restricted gap is also not completely satisfactory because for the restricted gap to be an optimality measure, one needs the knowledge of the norm of the iterates that the algorithm generates \citep{nesterov2007dual}. This is neither known in advance, nor easily controllable in the stochastic case since stochastic algorithms generate sequences that are not uniformly bounded. \\[2mm]
\textbf{Role of convexity and monotonicity.}
Let us consider the case of convex-concave min-max optimization, which is the problem we have when $f(\cdot, \by)$ is convex and $f(\bx, \cdot)$ is concave. This leads to a monotone SVI, that is, the operator $G$ defined by the gradients of $f$ is a monotone operator:
\begin{equation*}
\langle G(\bx) - G(\by), \bx-\by \rangle \geq 0.
\end{equation*}
To go beyond the convex-concave case, we will consider a common assumption from the literature, which requires the existence of a solution to the ($\rho\geq 0$)-weak Minty Variational Inequality (MVI), that is,
\begin{align}
\langle \bu, \bx - \bx^\star \rangle \geq -\rho \| \bu\|^2, \text{~where~} 
(\bx, \bu) \in \gra (G+\partial r) = \{ (\bx, \bu): \bu \in (G+\partial r)\bx \}.\label{eq: weakminty}\tag{wMVI}
\end{align} 
When $r\equiv 0$, this assumption was proposed by \cite{diakonikolas2021efficient} which generalized the comonotonicity assumption of \cite{combettes2004proximal, bauschke2021generalized} (see also \cite{pethick2022escaping} for the constrained version) which is shown to hold for the so-called \emph{interaction dominant} min-max problems \citep{grimmer2023landscape} and others \citep{pethick2022escaping}. For brevity, we sometimes refer to this as the \emph{weak MVI assumption} in the sequel. This defines a class of \emph{nonmonotone} problems since monotonicity of $G$ is not required. The level of nonmonotonicity (or, nonconvex-nonconcavity for the special case of min-max problems in \eqref{eq: minmax_prob}) is set by the parameter $\rho\geq0$.

When $\rho=0$, this reduces to the well-known assumption of the existence of a solution to the Minty VI \citep{facchinei2003finite}, which is also sometimes referred to as the \emph{coherence}, see, e.g., \cite{mertikopoulos2019optimistic}. For policy optimization in reinforcement learning, a variation of this with $\rho=0$ holds \citep{lan2023policy}. Some applications, including those with $\rho > 0$ are included in \citep{pethick2022escaping, alacaoglu2024revisiting,lee2021fast}.

The assumption  \eqref{eq: weakminty} is among the weakest-known requirement under which complexity results have been shown for nonconvex-nonconcave problems. Some alternative assumptions require Polyak-\L{}ojasiewicz (P\L{}) or Kurdyka-\L{}ojasiewicz (K\L{})-type properties to hold for the dual variable, see, e.g., \cite{li2025nonsmooth}. The relationship between the latter class of problems and problems satisfying \eqref{eq: weakminty} is not well-understood and the algorithm development between these two classes of problems have been largely independent of each other. We focus on analyzing algorithms under \eqref{eq: weakminty}.

\subsection{Assumptions}
For all our results except \Cref{sec: vr}, we require a standard Lipschitzness assumption for the operator $G$, which is defined as
\begin{equation*}
\| G(\bx) - G(\by) \| \leq L \|\bx-\by\|.
\end{equation*}
We now collect our assumptions used in the sequel.
\begin{assumption}\label{asp: 1}
Let $G$ be $L$-Lipschitz and let $G+\partial r$ satisfy \eqref{eq: weakminty} with a solution $\bz^\star$.
\end{assumption}
In \Cref{sec: vr}, we will use the stronger \emph{expected Lipschitzness} assumption that we discuss more in the sequel.
\begin{assumption}\label{asp: 2}
We can get unbiased samples $\gtil(\bx, \xi)$ and $\gtil(\by, \xi)$ with the same random seed $\xi$. Let $G$ be $L$-expected Lipschitz, that is:
\begin{equation*}
\E_{\xi} \| \gtil(\bx, \xi) - \gtil(\by, \xi) \|^2 \leq L_{\mathrm{exp}}\| \bx-\by\|^2,
\end{equation*}
where $\E [\gtil(\bx, \xi)] = G(\bx)$.
\end{assumption}
We finally formalize our assumptions concerning $G$.
\begin{assumption}\label{asp: 3}
At each iteration, we receive an i.i.d. oracle $\gtil$ such that
\begin{align*}
\E[\gtil(\bz)] = G(\bz) \text{~~~and~~~} 
\E \| \gtil(\bz) - G(\bz)\|^2 \leq B^2 \| \bz-\bz_0\|^2 + \sigma^2.
\end{align*}
\end{assumption}
Clearly, this assumption can also be restated to hold only at $\bz=\bz_t$. As mentioned before, this remains the weakest variance assumption under which optimal complexity guarantees are shown for convex minimization or monotone VIs \citep{alacaoglu2025towards}. In view of the last two assumptions, let us point out that we overload the notation $\gtil$ and use it without the argument $\xi$ when the context is suitable, that is, we often use the notation $\gtil(\bx_k) := \gtil(\bx_k, \xi_k)$.

\subsection{Contributions and Comparisons}
In the sequel, we first state the algorithms which are either existing in the literature or are simple modifications over the existing algorithms. Our main contribution is \emph{the analysis of these methods without the bounded variance or bounded domain assumptions}. We next highlight our main complexity results (by which we mean the number of stochastic first-order oracles ---\emph{sfo}--- used by an algorithm) for obtaining an output $\bz$ for which we have $\E[\res(\bz)]\leq\varepsilon$ (see \eqref{eq: res_def}).
\begin{itemize}
\item We show, in \Cref{sec: mb_main}, that for $\rho < \frac{1}{12L}$, a forward-backward-forward algorithm with mini-batching achieves the best-known stochastic oracle complexity $\widetilde{O}(\varepsilon^{-4})$.

\item We show, in \Cref{sec: mlmc}, that for $\rho < 1/L$ (which is the tightest-known upper bound for the nonmonotonicity parameter $\rho$), an inexact fixed-point algorithm equipped with multilevel Monte Carlo (MLMC) estimator achieves the best-known (expected) complexity $\widetilde{O}(\varepsilon^{-4})$.

\item We show, in \Cref{sec: vr}, that for $\rho < \frac{1}{16L}$ (see Thm. \ref{th: vr} for the details), a variance reduced forward-backward-forward method with Halpern anchoring achieves the best-known complexity $\widetilde{O}(\varepsilon^{-4})$. This method is single-loop and only uses $3$ stochastic oracles for $G$ at every iteration, not requiring any large mini-batch sizes.

\item In \Cref{sec: numer}, we test the numerical performance of our algorithms and illustrate two main points: \emph{(i)} as predicted by theory, \Cref{alg:weakmvi_stoc} converges for a wider range of $\rho$ compared to earlier works, for a problem where even the \emph{deterministic} extragradient algorithm, without noise, diverges \citep{gorbunov2023convergence}, \emph{(ii)} introduction of Halpern anchoring in \Cref{alg: var_red_fbf} results in a more robust behavior with respect to tuning of the initial step size compared to the benchmark  method in \citep{pethick2023solving}.

\end{itemize}

To our knowledge, we provide the first complexity results where nonmonotonicity in view of \eqref{eq: weakminty} can be tolerated for constrained min-max problems without bounded variance. \Cref{tab:constraints} contains a comparison between our results and the existing results for SVI without bounded variance.\emph{Our results match the complexity results known with bounded variance for Halpern-based or variance reduced methods, see \cite{lee2021fast,pethick2023solving}.}
\begin{table*}[t!]
\centering
\begin{tabular}{l l l l l l l}
\toprule
&\makecell{\textbf{Constraint}} & \makecell{\textbf{Need to} \\ \textbf{know$^\dagger$}} & \makecell{\textbf{Range}\\\textbf{of $\rho$}} & \makecell{\textbf{Complexity}} & \textbf{Lipschitz} & \makecell{\textbf{Single loop \&} \\ \textbf{MB$^*$-free}} \\
\midrule
\cite{choudhury2023single} & $\times$ & \makecell{$L, B, K, \sigma_\star^2,$ \\ $\| \bz_0-\bz^\star\|^2$ } & $ \rho < \frac{1}{2L}$ & $O(\varepsilon^{-4})$ & Asp. \ref{asp: 1} & $\times$ (large MB) \\[3mm]
\cite{kotsalis2022simpleii} & $\checkmark$ & $L, B, K$ & $\rho = 0$ & $O(\varepsilon^{-4})$ & Asp. \ref{asp: 1} & $\times$ (large MB) \\[3mm]
\cite{iusem2017extragradient} & $\checkmark$ & $L, B$ & $\rho = 0$ & $\widetilde{O}(\varepsilon^{-4})$ & Asp. \ref{asp: 1} & $\times$ (large MB) \\[3mm]
\cite{alacaoglu2025towards} & $\checkmark$ & $L, B$ & $\rho = 0$& $\widetilde{O}(\varepsilon^{-4})$ & Asp. \ref{asp: 2} & $\checkmark$\\[3mm]
Thm. \ref{th: mb}&$\checkmark$ & $L, B$ & \makecell{$\rho < \frac{1}{12L}$}& $\widetilde{O}(\varepsilon^{-4})$ & Asp. \ref{asp: 1} & $\times$ (incr. MB)\\[3mm]
Thm. \ref{th: mlmc} &$\checkmark$ & $L, B$ & $\rho < \frac{1}{L}$ & $\widetilde{O}(\varepsilon^{-4})$ & Asp. \ref{asp: 1} & $\times$ (loops) \\[3mm]
Thm. \ref{th: vr}&$\checkmark$ & $L, B$ & \makecell{$\rho <\frac{1}{16L}^\ddagger$} & $\widetilde{O}(\varepsilon^{-4})$ & Asp. \ref{asp: 2} & $\checkmark$\\
\bottomrule
\end{tabular}
\caption{Existing results without bounded variance assumption. $^*$MB: mini-batch. $^\dagger$Constants that algorithms need to set parameters. $^\ddagger$ The upper bound of $\rho$ in this case converges to $\frac{1}{16L}$ as a step size parameter get smaller, see \Cref{th: vr_supp} for details.}
\label{tab:constraints}
\end{table*}
\paragraph{Discussion about \Cref{tab:constraints}.} In the nonmonotone case ($\rho > 0$ in \Cref{tab:constraints}), our main contributions are two-fold, \emph{(i)} we provide complexity results for constrained SVI without bounded variance assumptions \emph{(ii)} even in the unconstrained case, we improve the existing result of \cite{choudhury2023single} because we show the results with the best-known range on $\rho$ and also without the knowledge of uncomputable quantities about the solution, that were required in \citep[Thm. 4.5]{choudhury2023single} for setting the mini-batch size.

In the monotone case, which is implied by the results with $\rho=0$ in \Cref{tab:constraints}, our contributions are the following: compared to \cite{kotsalis2022simpleii,iusem2017extragradient}, we provide guarantees without large mini-batch sizes.
 Compared to \cite{alacaoglu2025towards}, we provide guarantees under \Cref{asp: 1} which is weaker than \Cref{asp: 2} which requires a multi-point access to the oracle with a stronger Lipschitzness assumption. 

A simple example of a $G$ that satisfies \Cref{asp: 1} but not \Cref{asp: 2} (see~\cite{alacaoglu2024revisiting}) is $G(x) = G_1(x) + G_2(x)$ where $G_1(x) = x^2$ and $G_2(x) = -x^2$ and $\gtil$ is selected uniformly at random between $G_1, G_2$. However, \Cref{asp: 2} remains needed for single-loop algorithms even with bounded variance \cite{pethick2023solving}.

As we will discuss further in the sequel, three main results we prove are complementary: that is, they each extend the state-of-the-art in different directions and none of the proposed methods uniformly improve over the others. We refer to \Cref{tab:constraints} for a summary. The proofs are deferred to the appendices.

\section{Results with Minibatching}\label{sec: mb_main}
We start with the most straightforward approach one may take to address our problem: an algorithm with large mini-batches. On a high level, let us remark that when the mini-batch size becomes large enough, the algorithm behaves more and more like a \emph{full-gradient} algorithm. As a warm-up, we start with analyzing such an algorithm. In fact, this simple approach has complementary advantages to the other approaches we consider in the sequel. In particular, we require the weakest set of assumptions in this result and unlike our results in \Cref{sec: mlmc}, the number of oracles used at each iteration is \emph{deterministic} rather than \emph{expected}. We discuss this further in \Cref{sec: mlmc}.
\subsection{Algorithmic Ideas}
An idea that we use throughout the paper is the forward-backward-forward (FBF) algorithm of \cite{tseng2000modified}, which iterates as
\begin{align*}
\mathsf{z}_{k+1/2} &= \prox_{\eta_k r}(\mathsf{z}_k - \eta_k G(\mathsf{z}_k)) \\
\mathsf{z}_{k+1} &= \mathsf{z}_{k+1/2} - \eta_k \left( G(\mathsf{z}_{k+1/2}) - G(\mathsf{z}_k) \right).
\end{align*}
One can easily extend this method to the stochastic case:
\begin{equation}\label{eq: fbf_stoc}
\begin{aligned}
\bz_{k+1/2} &= \prox_{\eta_k r}(\bz_k - \eta_k \ghat(\bz_k)) \\
\bz_{k+1} &= \bz_{k+1/2} - \eta_k \left( \ghat(\bz_{k+1/2}) - \ghat(\bz_k) \right),
\end{aligned}
\end{equation}
where 
\begin{align}\label{eq: fbf_mb}
\ghat(\bz_k) &= \frac{1}{b_k} \sum_{i=1}^{b_k} \gtil(\bz_k, \xi_k^{i}) \text{~and~} \\	\ghat(\bz_{k+1/2}) &= \frac{1}{b_k} \sum_{i=1}^{b_k} \gtil(\bz_{k+1/2}, \xi_{k+1/2}^{i}),
\end{align}
for i.i.d. samples $\xi_k^i$ and $\xi_{k+1/2}^i$ for $i=1,\dots,b_k$.

This algorithmic construction is certainly not new, see, for example \citep{bohm2022two} who analyzed a similar stochastic FBF  under bounded variance, for complexity results on the expected gap function. Another related work by \cite{iusem2017extragradient} analyzed an extragradient method \'a la \cite{korpelevich1976extragradient} under \Cref{asp: 3} with mini-batching, in the case where $\rho=0$. 
\subsection{Complexity Analysis}\label{subsec: mb_complex}
We start with our main complexity result of this section. See \Cref{th: mb_supp} for a precise statement.
\begin{theorem}\label{th: mb}
Let Assumptions \ref{asp: 1} and \ref{asp: 3} hold and suppose that $\rho < \frac{1}{12L}$.
For the algorithm in \eqref{eq: fbf_stoc} with gradient estimators computed as \eqref{eq: fbf_mb} with $b_k = \Theta(k\log(k+1))$ and $\eta_k = \Theta(1/L)$ (where precise parameters are given in Thm. \ref{th: mb_supp}), we have that
\begin{align*}
\E [\res(\bz^{\out})] \leq \varepsilon \text{~with sfo complexity~} \widetilde{O}(\varepsilon^{-4}),
\end{align*}
where $\bz^{\out}$ is generated by selecting an index $\hat k$ uniformly at random after running $K$ iterations and letting $\bz^\out = \bz_{\hat{k}+1/2}$.
\end{theorem}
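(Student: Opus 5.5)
The plan is to run the standard deterministic FBF descent argument for weak MVI problems, keep the mini-batch errors as additive terms, and then use the growing batch sizes to make those terms summable even though the variance grows with $\|\bz-\bz_0\|^2$.

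\textbf{Error terms.} Write $e_k = \ghat(\bz_k)-G(\bz_k)$ and $e_{k+1/2} = \ghat(\bz_{k+1/2})-G(\bz_{k+1/2})$. By \Cref{asp: 3} and independence of the samples in \eqref{eq: fbf_mb},
\[
\E_k\|e_k\|^2 \le \frac{B^2\|\bz_k-\bz_0\|^2+\sigma^2}{b_k},
\]
and similarly for $e_{k+1/2}$ conditioned on $\bz_{k+1/2}$.

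\textbf{One-step inequality.} From the prox step, the vector
\[
\bu_k := \frac{1}{\eta}(\bz_k-\bz_{k+1/2}) - \ghat(\bz_k) + G(\bz_{k+1/2})
\]
lies in $(G+\partial r)\bz_{k+1/2}$. Also $\bz_{k+1} = \bz_k - \eta\bu_k - \eta e_{k+1/2}$.

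Expanding $\|\bz_{k+1}-\bz^\star\|^2$ and applying \eqref{eq: weakminty} to $(\bz_{k+1/2},\bu_k)$ gives
\[
\langle \bu_k, \bz_{k+1/2}-\bz^\star\rangle \ge -\rho\|\bu_k\|^2 .
\]
The term $\langle e_{k+1/2}, \bz_{k+1/2}-\bz^\star\rangle$ has zero conditional mean, because $\bz_{k+1/2}$ is fixed before the second batch is drawn. Lipschitzness gives
\[
\|\bu_k\| \le \Big(\tfrac1\eta+L\Big)\|\bz_k-\bz_{k+1/2}\| + \|e_k\| .
\]

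Combining these with Young's inequality, and taking $\eta = c/L$ with $c$ small and $\rho<1/(12L)$ so that the $\rho\|\bu_k\|^2$ loss is absorbed by the $\|\bz_k-\bz_{k+1/2}\|^2$ gain, I expect a bound of the form
\[
\E\|\bz_{k+1}-\bz^\star\|^2 \le \E\|\bz_k-\bz^\star\|^2 - c_1\eta^2\,\E\|\bu_k\|^2 + c_2\eta^2\,\E\big(\|e_k\|^2+\|e_{k+1/2}\|^2\big).
\]
Since $\res(\bz_{k+1/2})\le\|\bu_k\|$, this controls the residual at the output point.

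\textbf{Main obstacle: the noise is not bounded a priori.} The error term $\E\|e_k\|^2$ involves $\|\bz_k-\bz_0\|^2 \le 2\|\bz_k-\bz^\star\|^2 + 2\|\bz_0-\bz^\star\|^2$, so the noise feeds back into the distance. The same is true of $\|\bz_{k+1/2}-\bz_0\|^2$, which I bound by $\|\bz_k-\bz_0\|^2$ plus the step term $\|\bz_k-\bz_{k+1/2}\|^2$, and the latter is absorbed for $b_k$ large. Setting $D_k = \E\|\bz_k-\bz^\star\|^2$, this yields
\[
D_{k+1} \le \Big(1+\frac{c_3\eta^2B^2}{b_k}\Big)D_k - c_1\eta^2\,\E\|\bu_k\|^2 + \frac{c_4\eta^2\big(B^2D_0+\sigma^2\big)}{b_k}.
\]
With $b_k=\Theta(k\log(k+1))$ we have $\sum_k 1/b_k<\infty$ (this is why the log factor is needed). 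The product $\prod_k(1+c_3\eta^2B^2/b_k)$ is therefore bounded by a constant depending on $B/L$, and a discrete Gronwall/Robbins–Siegmund argument gives $\sup_k D_k\le C\,(D_0+\sigma^2/L^2)$.

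\textbf{Complexity count.} Summing the one-step inequality over $k=0,\dots,K-1$ then gives
\[
\frac1K\sum_{k<K}\E\|\bu_k\|^2 = O\!\left(\frac{L^2 D_0+\sigma^2}{K}\right).
\]
By Jensen, $\E[\res(\bz^\out)] = O(K^{-1/2})$, so $K=O(\varepsilon^{-2})$ iterations suffice. The oracle count is $\sum_{k\le K} 2b_k = O(K^2\log K) = \widetilde O(\varepsilon^{-4})$.
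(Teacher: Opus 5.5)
Your architecture is the paper's (Lemma~\ref{lem: app1} and Theorem~\ref{th: mb_supp}). You use the same element $\bu_k\in(G+\partial r)\bz_{k+1/2}$ in \eqref{eq: weakminty}, and the same split of $\|\bz_{k+1/2}-\bz_0\|^2$ with the step part absorbed by a large constant in $b_k$. You then use a Robbins--Siegmund step to get $\sup_k\E\|\bz_k-\bz^\star\|^2<\infty$, followed by telescoping and Jensen.

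\textbf{The gap.} The step that handles the unbounded variance is false as written. For $b_k=\Theta(k\log(k+1))$ one has $\sum_k 1/b_k=\infty$, because $\sum_{k\le K}\frac{1}{k\log k}\asymp\log\log K$ by the integral test. So neither the boundedness of $\prod_k(1+c_3\eta^2B^2/b_k)$ nor the summability of the additive term $c_4\eta^2(B^2D_0+\sigma^2)/b_k$ follows. Your uniform bound $\sup_kD_k\le C(D_0+\sigma^2/L^2)$ is therefore not established.

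This is why the precise parameters in Theorem~\ref{th: mb_supp} take $b_k=\bar b\,(k+1)\log^2(k+3)$ with $\bar b\propto B^2/(L^2\delta)$. Then $\sum_k\frac{1}{(k+1)\log^2(k+3)}<2$, your Gronwall/Robbins--Siegmund step goes through verbatim, and the oracle count becomes $O(K^2\log^2K)$, still $\widetilde O(\varepsilon^{-4})$.

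If you want to keep a single logarithm, you must give up the uniform bound and track growth instead. You have $\prod_{k<K}(1+c_3\eta^2B^2/b_k)\le\exp\bigl(c_3\eta^2B^2\sum_{k<K}1/b_k\bigr)=(\log K)^{O(B^2/(L^2\bar b))}$. So $D_k$ grows polylogarithmically, and you get $\frac1K\sum_{k<K}\E\|\bu_k\|^2=O(\mathrm{polylog}(K)/K)$. That still gives $\widetilde O(\varepsilon^{-4})$, but by a different argument from the one you wrote.

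\textbf{A smaller point on the step size.} Taking $\eta=c/L$ with $c$ small goes the wrong way. The weak-MVI loss $2\rho\eta\|\bu_k\|^2$ contains $\frac{2\rho}{\eta}(1+\eta L)^2\|\bz_k-\bz_{k+1/2}\|^2$, which blows up as $\eta\to0$. Absorbing it into the gain requires roughly $\frac{2\rho}{\eta}(1+\eta L)^2+\eta^2L^2<1$.

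For $\rho$ close to $1/(12L)$ this holds only for $\eta L$ in a narrow window around $0.4$; the paper uses $\eta=1/(\sqrt6L)$. It also holds only if Young's inequality puts weight $1+\epsilon$ on the step term and pushes the remainder onto $\|\ghat(\bz_k)-G(\bz_k)\|^2$, which the batch size then controls. The symmetric split $\|a+b\|^2\le2\|a\|^2+2\|b\|^2$ already fails once $\rho\ge1/(16L)$.
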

\begin{remark}
The main limitation of this scheme is that it requires increasing mini-batch sizes and the upper bound for $\rho$ is suboptimal (which we did not optimize). However, its strength lies in the fact that the number of stochastic oracles used at each iteration is deterministic. Compared to \citep[Thm. 4.5]{choudhury2023single}, we can handle constrained problems and our batch sizes do not require any knowledge about the solution\footnote{Parameter $b_k$, given explicitly in Thm. \ref{th: mb_supp} depends on $L, \rho, B$ whereas the mini-batch size in \citep[Thm. 4.5]{choudhury2023single} also depends on uncomputable $\|\bx_0-\bx^\star\|^2$ and $\mathbb{E}\|\nabla f_i(\bx^\star)\|^2$.}. Compared to \cite{kotsalis2022simpleii, iusem2017extragradient}, we can handle $\rho > 0$.
\end{remark}
The main idea of this proof is similar to \cite{kotsalis2022simpleii} and \cite{iusem2017extragradient} with the exception that our analysis can tolerate a nonzero $\rho$. In particular, by using the specific form of the mini-batch size, one can first show that the iterates stay bounded in expectation:
\begin{equation*}
\E \| \bz_k - \bz^\star \|^2 \leq R^2,
\end{equation*}
for an explicit $R$ provided in the Appendix \ref{app: proofs2}.

Then, one can use this bound on \eqref{eq: bg_def} to further upper bound the variance with a term depending on $R$. After this, the main analysis of FBF goes through. The main reason that we can tolerate a nonzero $\rho$ is the following chain of identities that follow from the definitions of $\bz_{k+1/2}$, $\bz_{k+1}$: 
\begin{align*}
&\bz_{k+1/2} = \arg\min_{\bz} r(\bz) + \frac{1}{2\eta_k} \| \bz-(\bz_k-\eta_k \ghat(\bz_k)) \|^2 \\
 &\iff \bz_{k+1/2} + \eta_k \partial r(\bz_{k+1/2}) \ni \bz_k - \eta_k \ghat(\bz_k) \\
 &\iff  G(\bz_{k+1/2}) + \partial r(\bz_{k+1/2}) \ni \eta_k^{-1}(\bz_k - \bz_{k+1}) + G(\bz_{k+1/2}) - \ghat(\bz_{k+1/2}).
\end{align*}
Then, in view of the assumption \eqref{eq: weakminty}, we have
\begin{align*}
\langle \eta_k^{-1}(\bz_k - \bz_{k+1}), \bz_{k+1/2} - \bz^\star \rangle &+\langle G(\bz_{k+1/2}) - \ghat(\bz_{k+1/2}), \bz_{k+1/2} - \bz^\star \rangle \\
&\quad \geq - \rho \| \eta_k^{-1}(\bz_k - \bz_{k+1}) + G(\bz_{k+1/2}) - \ghat(\bz_{k+1/2})\|^2,
\end{align*}
as $(\bz_{k+1/2}, \eta_k^{-1}(\bz_k - \bz_{k+1}) + G(\bz_{k+1/2}) - \ghat(\bz_{k+1/2})) \in \gra(G+\partial r)$.

Analyzing FBF in the standard way from this,
one has
\begin{align}
\E \| \bz_{k+1} - \bz^\star\|^2 &\leq \E \| \bz_{k} - \bz^\star\|^2+ \frac{2\rho}{\eta_k} \E\| \bz_k - \bz_{k+1} + \eta_k(G(\bz_{k+1/2}) - \ghat(\bz_{k+1/2})) \|^2 \notag \\
&\quad + \E\| \bz_{k+1} - \bz_{k+1/2}\|^2 - \E\| \bz_k-\bz_{k+1/2}\|^2. \label{eq: ajk4}
\end{align}
It is also easy to see by Young's inequality that 
\begin{align*}
 &\frac{\rho}{\eta_k} \| \bz_k - \bz_{k+1} + \eta_k(G(\bz_{k+1/2}) - \ghat(\bz_{k+1/2})) \|^2 \\
 & = O\left( \frac{\rho}{\eta_k}\left( 
    \| \bz_k - \bz_{k+1/2}\|^2 + \| \bz_{k+1/2} - \bz_{k+1}\|^2 + \eta_k^2\| \ghat(\bz_{k+1/2}) - G(\bz_{k+1/2})\|^2  \right) \right),
\end{align*}
and 
\begin{align*}
&\|\bz_{k+1} - \bz_{k+1/2}\|^2 \\
&= O(\eta_k^2(L^2 \|\bz_k - \bz_{k+1/2}\|^2+ \|\ghat(\bz_k)-G(\bz_k)\|^2 + \|\ghat(\bz_{k+1/2})-G(\bz_{k+1/2})\|^2)).
\end{align*}
Due to the last two bounds, with sufficiently small $\rho$, $\eta_k$ and large enough $b_k$ in \eqref{eq: ajk4}, one can cancel the error terms coming from nonmonotonicity and also coming from $\|\bz_{k+1} - \bz_{k+1/2}\|^2$.

\section{Results with MLMC}\label{sec: mlmc}
\begin{algorithm*}[t]
\caption{Inexact KM iteration (see \cite{alacaoglu2024revisiting})}
\begin{algorithmic}
    \STATE {\bfseries Input:} Parameters $\eta, N_k, M_k$, $\alpha=1-\frac{\rho}{\eta}$, $\alpha_k = \frac{\alpha}{\sqrt{k+2}\log(k+3)}$,  $\bz_0$, subroutine $\texttt{MLMC{-}FBF}$ given in Algorithm \ref{alg:fbf_mlmc} \\
    \vspace{.2cm}
    \FOR{$k = 0, 1, 2,\ldots, K-1 $}
        \STATE $\widetilde{J}^{(m)}_{\eta(G+\partial r)}(\bz_k) = \texttt{MLMC{-}FBF}\left(\bz_k, N_k, \eta \partial r, \id + \eta \gtil,1+\eta L\right)$ independently $m=1,\dots, M_k$
        \STATE $\widetilde{J}_{\eta(G+\partial r)}(\bz_k) = \frac{1}{M_k}\sum_{i=1}^{M_k} \widetilde{J}_{\eta(G+\partial r)}^{(i)}(\bz_k)$
        \STATE $\bz_{k+1} = (1-\alpha_k)\bz_k + \alpha_k \widetilde{J}_{\eta(G+\partial r)}(\bz_k)$
        \ENDFOR
      \end{algorithmic}
\label{alg:weakmvi_stoc}
\end{algorithm*}

\begin{algorithm}[!t]
\caption{$\texttt{MLMC{-}FBF}(\bx_0, N, A, B, L_B)$ (see \cite{alacaoglu2024revisiting})}
\begin{algorithmic}
    \STATE {\bfseries Input:} Initial iterate $\bx_0$, subsolver $\texttt{FBF}$ from \Cref{alg:fbf_stoc} \\
    \vspace{.2cm}
    \STATE Define $\by^i = \texttt{FBF}(\bx_0, 2^i, \widetilde{B}, A, L_B)$ for any $i\geq 0$. Draw $I\sim \mathrm{Geom}(1/2)$\\
    \vspace{.2cm}
    \STATE \textbf{Output:} $\by^{\out} = \by^0 + 2^I(\by^I - \by^{I-1})$ if $2^I \leq N$, otherwise $\by^{\text{out}} = \by^0$.
      \end{algorithmic}
\label{alg:fbf_mlmc}
\end{algorithm}
\begin{algorithm}[!t]
\caption{$\texttt{FBF}(z_0, T, A, \widetilde{B}_{\mathrm{in}}, L_B)$ from \citep{tseng2000modified} -- Stochastic}
\begin{algorithmic}
    \STATE {\bfseries Input:} Initial iterate $\bx_0$, $\widetilde{B}(\cdot) = \widetilde{B}_{\mathrm{in}}(\cdot) - \bx_0$ \\
    \vspace{.2cm}
    \FOR{$t = 0, 1, 2,\ldots, T-1 $}
        \STATE $\bx_{t+1/2} = \prox_{\tau_t r}(\bx_t - \tau_t \widetilde{B}(\bx_t)) $ 
        \STATE $\bx_{t+1} = \bx_{t+1/2} +\tau_t \widetilde{B}(\bx_t) - \tau_t \widetilde{B}(\bx_{t+1/2})$
        \ENDFOR
      \end{algorithmic}
\label{alg:fbf_stoc}
\end{algorithm}
As mentioned earlier, the parameter $\rho$ determines the level of nonmonotonicity  (or nonconvex-nonconcavity for min-max problems) in view of \eqref{eq: weakminty}. The largest-known upper bound for $\rho$ that can be tolerated by first-order methods is recently established by \citet{alacaoglu2024revisiting} who showed the best-known first-order complexity results under $\rho < 1/L$. This work only focused on analyzing their method under the bounded variance assumption. In this section, we show how to generalize their analysis under \Cref{asp: 3} which relaxes the bounded variance assumption.

\subsection{Algorithmic Ideas}
To describe the algorithmic ideas, let us recall the definition of nonexpansiveness, which asks for the operator $T\colon \mathbb{R}^m \to \mathbb{R}^m$ to satisfy $\|T\bx-T\by \| \leq \| \bx-\by\|$. 
We next recall the resolvent operator, which is a generalization of the proximal operator. In particular, we define the resolvent of $G+\partial r$ as
\begin{equation}\label{eq: def_resolvent}
J_{\eta(G+\partial r)} = (\id+\eta(G+\partial r))^{-1},
\end{equation}
where we used $\id$ to denote the identity operator.

In the special case of min-max problems, we have
\begin{align}
J_{\eta(G+\partial r)}(\bz_k) &= \arg\min_\bx \max_\by f(\bx, \by) + h_1(\bx)-h_2(\by) + \frac{1}{2\eta} \| \bx-\bx_k\|^2 - \frac{1}{2\eta} \| \by-\by_k\|^2,\label{eq: def_prox}
\end{align}
which is a strongly convex-strongly concave optimization problem. To make this connection explicit, we will refer to the estimation of approximate solutions of \eqref{eq: def_resolvent} as the \emph{proximal subproblem}.  Approximate solutions to \eqref{eq: def_resolvent} or \eqref{eq: def_prox} can be found efficiently with stochastic gradients of $f$ or stochastic evaluations of $G$ \cite{kotsalis2022simpleii}.

Then, the algorithm in \cite{alacaoglu2024revisiting} proceeds by applying the classical inexact Krasnoselskii-Mann (KM) iteration to the \emph{conically} quasi-nonexpansive operator (in view of \cite{bauschke2021generalized}) 
$J_{\eta(G+\partial r)}$,
where this property of $J_{\eta(G+\partial r)}$ under \eqref{eq: weakminty} follows from the developments in \cite{bauschke2021generalized}, see \cite{alacaoglu2024revisiting} for the details.

As motivated in \cite{alacaoglu2024revisiting}, to get the best complexity with this scheme, one needs a strong control over the bias of the estimation of $J_{\eta(G+\partial r)}$. Multilevel Monte-Carlo technique is a natural choice because it helps trade-off the bias and variance of the estimator. The primitive used in MLMC is an algorithm that can solve the strongly monotone proximal subproblem (see \eqref{eq: def_resolvent}, \eqref{eq: def_prox}) with an optimal complexity. This is indeed where bounded variance was required in the work of \cite{alacaoglu2024revisiting}. Hence we have precisely the same algorithm, but we show how it can be analyzed under more general \Cref{asp: 3}.

\subsection{Complexity Analysis}
The main complexity result in this section is summarized in the following theorem. See \Cref{th: mlmc_supp} for a detailed restatement.
\begin{theorem}\label{th: mlmc}
Let Assumptions \ref{asp: 1}, \ref{asp: 3} hold and suppose that $\rho < 1/L$. Then, for Alg. \ref{alg:weakmvi_stoc} with $\eta \leq \frac{1}{L}$ and $\alpha_k = \frac{\alpha}{\sqrt{k+2}\log(k+3)}$ (where the expressions for $N_k, M_k$ are given in App. \ref{app: proofs3}), we can generate $\bz^\out$ such that
\begin{equation*}
\E [\res(\bz^{\out})] \leq \varepsilon \text{~with expected sfo complexity~} \widetilde{O}(\varepsilon^{-4}),
\end{equation*}
where $\bz_{\hat k}$ is selected uniformly at random after running the algorithm for $K$ iterations and $\bz^{\out}$ is generated by applying one step of \eqref{eq: fbf_stoc} for problem \eqref{eq: def_prox}, starting from $\bz_{\hat k}$.
\end{theorem}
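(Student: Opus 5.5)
Let $J:=J_{\eta(G+\partial r)}$ and $\widetilde{J}:=\widetilde J_{\eta(G+\partial r)}$. The plan follows the bounded-variance analysis of \cite{alacaoglu2024revisiting}. The only place bounded variance enters there is the inner solver for the strongly monotone subproblem \eqref{eq: def_prox}, so I would isolate that step.

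\textbf{Step 1 (inner solver without bounded variance).} The operator $F_k := \id - \bz_k + \eta(G+\partial r)$ is $(1-\eta\rho/\ldots)$-type strongly monotone; more precisely, $\id+\eta G$ is $(1-\eta L)$-monotone-plus-Lipschitz. Its forward part $\id-\bz_k+\eta G$ is $(1+\eta L)$-Lipschitz. Its stochastic oracle $\bx-\bz_k+\eta\gtil(\bx)$ has variance at most $\eta^2(B^2\|\bx-\bz_0\|^2+\sigma^2)$.

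For stochastic FBF (\Cref{alg:fbf_stoc}) with decreasing steps $\tau_t=\Theta(1/(\mu(t+t_0)))$, I would prove
\[
\E_k\|\bx_T-J\bz_k\|^2\lesssim \frac{\|\bz_k-J\bz_k\|^2+\eta^2\sigma^2+\eta^2B^2\|J\bz_k-\bz_0\|^2}{T}.
\]
To do so, I bound $\|\bx_t-\bz_0\|^2\le 2\|\bx_t-J\bz_k\|^2+2\|J\bz_k-\bz_0\|^2$. The first term is absorbed into the contraction provided $\tau_t\eta^2B^2$ is small relative to $\mu\tau_t$. This is ensured by a burn-in $t_0$ depending on $\eta B/\mu$, which is where knowledge of $B$ is needed. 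This yields the standard $O(1/T)$ rate with an effective variance
\[
\sigma_k^2:=\eta^2\bigl(\sigma^2+B^2\|J\bz_k-\bz_0\|^2\bigr),
\]
which is now state-dependent.

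\textbf{Step 2 (MLMC bias/variance).} Plugging Step 1 into the standard MLMC computation (\Cref{alg:fbf_mlmc}), I would establish three bounds:
\begin{itemize}
\item bias $\|\E_k\widetilde J-J\bz_k\|^2\lesssim \dfrac{\|\bz_k-J\bz_k\|^2+\sigma_k^2}{N_k}$;
\item variance $\E_k\|\widetilde J-J\bz_k\|^2\lesssim \dfrac{\log N_k\,(\|\bz_k-J\bz_k\|^2+\sigma_k^2)}{M_k}$;
\item expected cost $O(M_k\log N_k)$.
\end{itemize}

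\textbf{Step 3 (outer KM recursion and boundedness).} Conic quasi-nonexpansiveness of $J$ under \eqref{eq: weakminty} with $\eta\le 1/L$ and $\rho<\eta$ gives
\[
\|(1-\alpha)\bz+\alpha J\bz-\bz^\star\|^2\le\|\bz-\bz^\star\|^2-\alpha(1-\alpha)\|\bz-J\bz\|^2 .
\]
Using $\alpha_k\le\alpha$, I expand $\|\bz_{k+1}-\bz^\star\|^2$. The bias enters through a cross term of order $\alpha_k\|\bz_k-\bz^\star\|\cdot\text{bias}$, and the variance enters as $\alpha_k^2\cdot\text{var}$.

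Next I use $\|J\bz_k-\bz_0\|\le \|J\bz_k-\bz^\star\|+\|\bz_0-\bz^\star\|$, together with $\|J\bz_k-\bz^\star\|\lesssim\|\bz_k-\bz^\star\|$ (from conic quasi-nonexpansiveness) and $\|\bz_k-J\bz_k\|\lesssim\|\bz_k-\bz^\star\|$. With these, every error term is of the form
\[
c_k\bigl(\E\|\bz_k-\bz^\star\|^2+\|\bz_0-\bz^\star\|^2+\sigma^2\bigr),
\]
with $c_k\approx \alpha_k/\sqrt{N_k}+\alpha_k^2\log N_k/M_k$. I then choose $N_k\approx k^2$ polylog and $M_k=O(1)$ or polylog, so that $\sum_k c_k<\infty$. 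Here the choice $\alpha_k\propto 1/(\sqrt{k}\log k)$ makes $\sum\alpha_k^2<\infty$. A discrete Gronwall / Robbins–Siegmund argument then gives $\sup_k\E\|\bz_k-\bz^\star\|^2\le R^2$ with explicit $R$.

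This step is the main obstacle: the state-dependent variance multiplies the iterate distance, so boundedness must be shown before the rate. The cross term from the bias must be controlled by Young's inequality without losing the descent term.

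\textbf{Step 4 (rate and output).} Summing the recursion gives
\[
\sum_k\alpha_k\,\E\|\bz_k-J\bz_k\|^2\lesssim R^2 .
\]
Because $\alpha_k$ is decreasing, a uniform random index (or weighting) gives $\E\|\bz_{\hat k}-J\bz_{\hat k}\|^2=\widetilde O(1/\sqrt K)$.

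For the output, one step of \eqref{eq: fbf_stoc} on \eqref{eq: def_prox} from $\bz_{\hat k}$, with a mini-batch of size $\widetilde O(\varepsilon^{-2})$, produces $\bz^\out$ with $\E[\res(\bz^\out)]\lesssim \eta^{-1}\E\|\bz_{\hat k}-J\bz_{\hat k}\|+\text{noise}$. This uses the inclusion identity displayed in \Cref{sec: mb_main}, $L$-Lipschitzness, and the variance bound with $R$. Taking $K=\widetilde O(\varepsilon^{-4})$ with polylog inner cost per iteration gives expected sfo complexity $\widetilde O(\varepsilon^{-4})$.
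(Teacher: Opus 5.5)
Your proposal is correct and takes essentially the paper's route. The ingredients are the same: the inner FBF rate under Assumption~\ref{asp: 3} for the strongly monotone resolvent subproblem (Lemma~\ref{lem: str_mon}), the MLMC bias/variance bounds, the inexact KM estimate of Lemma~\ref{eq: lem_resid} together with a boundedness argument for $\E\|\bz_k-\bz^\star\|^2$, and a final mini-batched forward--backward step converting $\|(\id-J)\bz_{\hat k}\|$ into the residual. You are in fact more explicit than the paper about the state-dependent effective variance $\eta^2(\sigma^2+B^2\|J\bz_k-\bz_0\|^2)$, which Lemma~\ref{lem: mlmc2} simply writes as $\sigma^2$.

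There are two small slips to fix, neither of which affects the conclusion:
\begin{enumerate}
\item A KM step of size $\lambda$ only yields the descent $\lambda(2\alpha-\lambda)\|\bz-J\bz\|^2$, i.e.\ $\alpha^2$ at $\lambda=\alpha$. Your $\alpha(1-\alpha)$ is false for $\alpha<1/2$, for instance on the rotation example where \eqref{eq: weakminty} holds with equality.
\item $M_k=O(1)$ does not give $\sum_k c_k<\infty$, because $\alpha_k^2\log N_k\sim 1/(k\log k)$. Take $M_k\propto\log N_k$, as in \eqref{eq: sar4}.
\end{enumerate}
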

\begin{remark}
From a theoretical point-of-view, this theorem gives us the strongest result because we obtain the best-known complexity results under \eqref{eq: weakminty} with the best-known range on $\rho$ without bounded variance. On the other hand, a theoretical limitation of this approach is that the number of stochastic oracle calls at each iteration is random, causing the final complexity result to be on the \emph{expected} number of oracle calls (which is shared by all approaches relying on MLMC). A practical drawback is that the algorithm is not single-loop.
\end{remark}
We start with a lemma from \cite{alacaoglu2024revisiting} which only uses \Cref{asp: 1} and gives a bound on how close $\bz_k$ is to be a solution of \eqref{eq: inc_prob} since a fixed-point of the resolvent $J_{\eta(G+\partial r)}$ is a solution of \eqref{eq: inc_prob}.

\begin{lemma}(See \citep[Lemma C.8]{alacaoglu2024revisiting})\label{eq: lem_resid}
Let \Cref{asp: 1} hold. We then have
\begin{align*}
& \sum_{k=0}^{K-1} \alpha_k \E \| (\id - J_{\eta(G+\partial r)})(\bz_k)\|^2 \\&= O\Big( \|\bz_0 - \bz^\star\|^2 \\
&\quad+  \E\sum_{k=0}^{K-1} \Big[\alpha_k^2 \|J_{\eta(G+\partial r)}(\bz_k) - \widetilde{J}_{\eta(G+\partial r)}(\bz_k)\|^2 + \alpha_k \|\bz_k-\bz^\star\| \|J_{\eta(G+\partial r)}(\bz_k) - \E_k[\widetilde{J}_{\eta(G+\partial r)}(\bz_k)]\|\Big]\Big).
\end{align*}
\end{lemma}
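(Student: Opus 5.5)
The plan is to treat the iteration as an inexact Krasnoselskii--Mann scheme applied to the conically quasi-nonexpansive operator $T := J_{\eta(G+\partial r)}$, and to run the usual Fej\'er-type argument with respect to $\bz^\star$. Write $\widetilde{T}_k := \widetilde{J}_{\eta(G+\partial r)}(\bz_k)$, and split the error into a bias and a noise part: $e_k := T\bz_k - \widetilde{T}_k = b_k + w_k$, where $b_k := T\bz_k - \E_k[\widetilde{T}_k]$ and $w_k := \E_k[\widetilde{T}_k] - \widetilde{T}_k$, so that $\E_k[w_k]=0$.

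\textbf{Step 1 (conic quasi-nonexpansiveness).} We need the property that, under \eqref{eq: weakminty} with $\rho<\eta$ (which holds since $\alpha = 1-\rho/\eta > 0$), the operator $T$ is conically quasi-nonexpansive. Concretely, for some $\alpha$-dependent constant $c>0$,
\[
\|T\bz-\bz^\star\|^2 \le \|\bz-\bz^\star\|^2 - c\|(\id-T)\bz\|^2 .
\]
To derive it, set $\bu=\eta^{-1}(\bz-T\bz)$. Then $\bu\in(G+\partial r)(T\bz)$, so \eqref{eq: weakminty} gives
\[
\langle \bz-T\bz,\, T\bz-\bz^\star\rangle \ge -\tfrac{\rho}{\eta}\|\bz-T\bz\|^2 .
\]
Expanding $\|\bz-\bz^\star\|^2 = \|\bz-T\bz\|^2 + \|T\bz-\bz^\star\|^2 + 2\langle \bz-T\bz, T\bz-\bz^\star\rangle$ then yields $c = 1-2\rho/\eta$. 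This is positive only when $\rho<\eta/2$, which is not enough. To cover the full range we instead use the relaxed operator $(1-\alpha)\id+\alpha T$, which is nonexpansive toward $\bz^\star$ in the conic sense of \cite{bauschke2021generalized}, with a residual coefficient proportional to $\alpha$. Since $\alpha_k\le\alpha$, the KM step $\bz_{k+1}=(1-\alpha_k)\bz_k+\alpha_k\widetilde{T}_k$ is a relaxation of that operator.

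\textbf{Step 2 (one-step inequality).} Write $\bz_{k+1} = (1-\alpha_k)\bz_k + \alpha_k T\bz_k - \alpha_k e_k$ and expand
\[
\|\bz_{k+1}-\bz^\star\|^2 = \|\bz_k-\bz^\star-\alpha_k(\id-T)\bz_k - \alpha_k e_k\|^2 .
\]
The exact part satisfies
\[
\|\bz_k-\bz^\star-\alpha_k(\id-T)\bz_k\|^2 \le \|\bz_k-\bz^\star\|^2 - \alpha_k(\alpha-\alpha_k)\,c'\,\|(\id-T)\bz_k\|^2 ,
\]
using Step 1 and $\alpha_k \le \alpha/\sqrt2$. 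For the remaining terms we take conditional expectation $\E_k$. The cross term with $w_k$ vanishes; the cross term with $b_k$ is at most $2\alpha_k\|b_k\|\,\|\bz_k-\bz^\star-\alpha_k(\id-T)\bz_k\|$. The quadratic term is $\alpha_k^2\E_k\|e_k\|^2$.

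\textbf{Step 3 (controlling the bias cross term).} This is the step I expect to be the main obstacle, because the coupling $\|\bz_k-\alpha_k(\id-T)\bz_k-\bz^\star\|$ has to be reduced to $\|\bz_k-\bz^\star\|$ without losing the descent term. I would use $\|\bz_k-\bz^\star-\alpha_k(\id-T)\bz_k\| \le \|\bz_k-\bz^\star\|$ (which follows from the previous display) or, alternatively, $\|(\id-T)\bz_k\|\lesssim\|\bz_k-\bz^\star\|$ (by Lipschitzness of $T$ around its fixed point, which holds for $\eta L<1$). Either way the cross term is absorbed into $O(\alpha_k\|\bz_k-\bz^\star\|\,\|b_k\|)$.

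\textbf{Step 4 (telescoping).} Take total expectations and sum over $k=0,\dots,K-1$. Since $\alpha-\alpha_k\ge c''\alpha$, dividing by the constant gives the claimed bound: the $\alpha_k^2$ term matches the mean-squared error term and the bias term matches the product term in the statement.
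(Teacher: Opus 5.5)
Your proposal is correct and is essentially the standard inexact Krasnoselskii--Mann argument behind \citep[Lemma C.8]{alacaoglu2024revisiting}, which the paper cites without reproving. The weak MVI inequality you derive in Step 1 gives $\langle (\id-J_{\eta(G+\partial r)})\bz,\bz-\bz^\star\rangle\ge\alpha\|(\id-J_{\eta(G+\partial r)})\bz\|^2$, so $(1-\alpha)\id+\alpha J_{\eta(G+\partial r)}$ is quasi-nonexpansive toward $\bz^\star$, and a KM step with $\alpha_k\le c\,\alpha$, $c<1$, then yields your Step 2 descent. Your remaining steps are the same as in that argument: the bias/noise split, the bound $\|\bz_k-\bz^\star-\alpha_k(\id-J_{\eta(G+\partial r)})\bz_k\|\le\|\bz_k-\bz^\star\|$ for the bias cross term, and telescoping, with the resulting $1/\alpha$ factor absorbed into the $O(\cdot)$.
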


The main message of this lemma is that the error in form \emph{bias}, that is $ \|J_{\eta(G+\partial r)}(\bz_k) - \E_k[\widetilde{J}_{\eta(G+\partial r)}(\bz_k)]\|$ is only multiplied by $\alpha_k$ whereas the error in form \emph{variance} $\E \|J_{\eta(G+\partial r)}(\bz_k) - \widetilde{J}_{\eta(G+\partial r)}(\bz_k)\|^2$ is multiplied by $\alpha_k^2$. Since $\alpha_k$ is small, this means that the analysis has the former error as the bottleneck.

This is where MLMC comes into play and the primitive in this estimator is a subsolver for the proximal subproblem. As mentioned before, this is the only place where \cite{alacaoglu2024revisiting} needed the bounded variance assumption. We next show that the bounded variance is in fact not needed: by a slightly different choice of step size, one can incorporate \Cref{asp: 3}. Similar results appeared in \cite{kotsalis2022simpleii,wang2015incremental} for different algorithms. We provide our analysis for simplicity and to be self-contained.
\begin{lemma}\label{eq: siv4}
Let Assumptions \ref{asp: 1}, \ref{asp: 3} hold. Let $\tau_t = \Theta\left( \frac{1}{t\mu+L^2/\mu} \right)$. We have for the output of Alg. \ref{alg:fbf_stoc} that
\begin{equation*}
\E \| \bx_T - J_{\eta(G+\partial r)}(\bz_k) \|^2 = O(T^{-1}).
\end{equation*}
\end{lemma}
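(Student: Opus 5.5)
We view the proximal subproblem as finding a zero of $A + \widetilde B$ with $A=\eta\partial r$ and $B(\bx) = \bx - \bz_k + \eta G(\bx)$. The sub-operator $B$ is $\mu$-strongly monotone with $\mu = 1-\eta\rho$-type constant. More precisely, we use that $\id+\eta G$ restricted to the resolvent problem is strongly monotone for $\eta\le 1/L$ in the sense needed for FBF, e.g.\ via $\langle B\bx - B\by,\bx-\by\rangle \ge (1-\eta L)\|\bx-\by\|^2$ when $G$ is only Lipschitz. We take $\mu$ to be this constant and $L_B = 1+\eta L$ its Lipschitz constant. Its unique zero is $\bx^\star := J_{\eta(G+\partial r)}(\bz_k)$. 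The stochastic oracle is $\widetilde B(\bx) = \bx - \bz_k + \eta\gtil(\bx)$, and by \Cref{asp: 3} its variance is at most $\eta^2(B^2\|\bx-\bz_0\|^2+\sigma^2)$. Applying Young's inequality gives a bound of the form $2\eta^2B^2\|\bx-\bx^\star\|^2 + \sigma_k^2$ with $\sigma_k^2 := \eta^2(2B^2\|\bx^\star-\bz_0\|^2+\sigma^2)$. This is the key reduction: the variance grows at most quadratically in the distance to the subproblem solution.

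\textbf{One-step inequality.} Next we derive a one-step FBF inequality conditioned on $\bx_t$. Using the prox characterization of $\bx_{t+1/2}$ as in the chain of identities in \Cref{sec: mb_main}, together with strong monotonicity of $A+B$ at $(\bx_{t+1/2},\bx^\star)$, the standard Tseng analysis gives
\begin{align*}
\|\bx_{t+1}-\bx^\star\|^2 &\le \|\bx_t-\bx^\star\|^2 - 2\tau_t\mu\|\bx_{t+1/2}-\bx^\star\|^2 - \|\bx_t-\bx_{t+1/2}\|^2 + \|\bx_{t+1}-\bx_{t+1/2}\|^2 \\
&\quad + 2\tau_t\langle B(\bx_{t+1/2})-\widetilde B(\bx_{t+1/2}),\bx_{t+1/2}-\bx^\star\rangle .
\end{align*}
The inner-product term has zero conditional expectation, because the sample at $\bx_{t+1/2}$ is fresh.
\begin{itemize}
\item We bound $\|\bx_{t+1}-\bx_{t+1/2}\|^2 \le 3\tau_t^2\big(L_B^2\|\bx_t-\bx_{t+1/2}\|^2 + \|\text{noise}_t\|^2+\|\text{noise}_{t+1/2}\|^2\big)$.
\item We bound the noise variances by $2\eta^2B^2\|\cdot-\bx^\star\|^2+\sigma_k^2$, evaluated at $\bx_t$ and $\bx_{t+1/2}$.
\item We write $\|\bx_t-\bx^\star\|^2\le 2\|\bx_t-\bx_{t+1/2}\|^2+2\|\bx_{t+1/2}-\bx^\star\|^2$.
\end{itemize}
With $\tau_t \le c\,\mu/L_B^2$ for a small constant $c$, and assuming $\eta^2B^2 \lesssim L_B^2$ (absorbed into $L_B$), all $\|\bx_t-\bx_{t+1/2}\|^2$ terms cancel. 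The $B$-dependent terms are then $O(\tau_t^2 \eta^2B^2)\|\bx_{t+1/2}-\bx^\star\|^2$, which is dominated by $\tau_t\mu$. Finally, converting $\|\bx_{t+1/2}-\bx^\star\|^2$ back to $\|\bx_t-\bx^\star\|^2$ using the retained $-\|\bx_t-\bx_{t+1/2}\|^2$ slack yields the recursion
\[
\E\|\bx_{t+1}-\bx^\star\|^2 \le (1-c'\tau_t\mu)\,\E\|\bx_t-\bx^\star\|^2 + C\tau_t^2\sigma_k^2 .
\]

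\textbf{Unrolling and main obstacle.} With $\tau_t = \Theta(1/(t\mu + L_B^2/\mu))$, which is exactly the stated schedule, the standard induction (e.g.\ Chung's lemma style) gives
\[
\E\|\bx_T-\bx^\star\|^2 = O\!\left(\frac{(L_B^2/\mu^2)\,\|\bx_0-\bx^\star\|^2 + \sigma_k^2/\mu^2}{T + L_B^2/\mu^2}\right) = O(T^{-1}),
\]
where the constant depends on $\|\bz_k - \bx^\star\|$ and $\|\bx^\star-\bz_0\|$, since here $\bx_0=\bz_k$. The main obstacle is the variance term, which is proportional to $\|\bx_{t+1/2}-\bx^\star\|^2$ rather than a constant. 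It must be absorbed by the strong-monotonicity contraction, which forces the step size cap $\tau_t\lesssim\mu/(L_B^2+\eta^2B^2)$. This cap is why the schedule starts at $\mu/L^2$ rather than the usual $1/L$. We expect the careful balancing of the Young constants there to be the delicate part. The constant's dependence on $\|\bz_k-\bz^\star\|$ is then handled in the outer analysis, where it is multiplied by $\alpha_k$ as in \Cref{eq: lem_resid}.
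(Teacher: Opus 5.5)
Your proposal is correct and follows essentially the same route as the paper's proof of the detailed version, Lemma~\ref{lem: str_mon}. Both arguments use the same steps: a one-step Tseng inequality from strong monotonicity; a Young/Lipschitz bound on $\|\bx_{t+1}-\bx_{t+1/2}\|^2$ in which the state-dependent variance is split around $\bx^\star$ and absorbed into the $\tau_t\mu$ contraction via a cap $\tau_t\lesssim \mu/M^2$, with $M=\max(L_{\bsf},B)$; cancellation of the $\|\bx_t-\bx_{t+1/2}\|^2$ terms; and an induction on a bound of the form $C/(T+\kappa)$. The only bookkeeping difference is that the paper keeps the contraction on $\|\bx_{t+1}-\bx^\star\|^2$ on the left-hand side instead of converting to $\|\bx_t-\bx^\star\|^2$, which is immaterial.

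You should state two conditions explicitly. First, the hidden constant in $\tau_t=\theta/(\mu(t+\kappa))$ must satisfy $c'\theta>1$; otherwise the Chung-type induction gives a slower rate than $O(T^{-1})$. The paper fixes $\theta=4$ and $\kappa=144M^2/\mu^2$, which yields the contraction $\frac{t+3+\kappa}{t+5+\kappa}$. Second, $\mu=1-\eta L>0$ requires $\eta<1/L$ strictly (the "$1-\eta\rho$-type" constant you first mention is not right).
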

This lemma tells us how close $\bx_T$ is to being a solution of the proximal subproblem, see \eqref{eq: def_resolvent} and \eqref{eq: def_prox}. The main idea is extremely simple and can be found in standard textbooks for the minimization case, see \citep[Section 5.4.3]{wright2022optimization}. In particular, once we have strong convexity (or strong convexity-strong concavity), handling the additional error terms coming from \Cref{asp: 3} is straightforward, because the negative term that the strong convexity gives can be used to cancel these. 

Let us provide the sketch of the argument here. Without the explicit constants, the main recursion for the stochastic FBF under bounded variance is the following
\begin{equation*}
\E\| \bz_{t+1} - \bz^\star \|^2 \leq (1-\tau_t\mu)\E\| \bz_{t} - \bz^\star \|^2 + O(\tau_t^2),
\end{equation*}
which one can use to get a rate $O(1/t)$, by induction. Under \Cref{asp: 3}, the recursion becomes
\begin{equation*}
\E\| \bz_{t+1} - \bz^\star \|^2 \leq (1-\tau_t\mu+\tau_t^2B^2)\E\| \bz_{t} - \bz^\star \|^2 + O(\tau_t^2).
\end{equation*}
Focusing on the coefficient of the first term on the right-hand side, the new error term scales as $\tau_t^2$. This can be absorbed in the term $-\tau_t\mu$ since for small $\tau_t$, we have $\tau_t^2 < \tau_t$. Hence, even though our problem is, strictly speaking, more general, the resulting recursion is the same as SGD and the same ideas as \citep[Section 5.4.3]{wright2022optimization} can be used.

Equipped with this result, which gives us the desired behavior from the subsolver under \Cref{asp: 3}, the analysis follows the same steps as \cite{alacaoglu2024revisiting}.

\section{Variance Reduction}\label{sec: vr}
\begin{algorithm}[H]
   \caption{Variance reduced FBF with Halpern anchoring}
   \label{alg:example}
\begin{algorithmic}
   \STATE {\bfseries Input:} Initial iterate $\bz_0$ and parameter $\rho \geq 0$.
   \FOR{$k=0, \dots$}
   \STATE $\bar \bz_k = \beta_k \bz_0 + (1-\beta_k) \bz_k$
   \STATE $\bz_{k+1/2} = \prox_{\gamma_k r}(\bar\bz_k - \gamma_k \bg_k )$
   \STATE Set $\widetilde{G}_{k+1/2} = \widetilde G(\bz_{k+1/2}, \xi_{k+1/2})$
   \STATE $\bz_{k+1} = \bar\bz_k - \tau_k (\bar \bz_k - \bz_{k+1/2}- \gamma_k \bg_k + \gamma_k \widetilde{G}_{k+1/2})$
   \STATE $\bg_{k+1} = \widetilde{G}(\bz_{k+1}, \xi_{k+1}) + (1-\alpha_k)(\bg_{k} - \widetilde{G}(\bz_{k}, \xi_{k+1}))$
   \ENDFOR
\end{algorithmic}
\label{alg: var_red_fbf}
\end{algorithm}

In this section, we design a single-loop algorithm that does not rely on large mini-batch sizes, that only uses $3$ unbiased samples of $G$ at every iteration. 

\subsection{Algorithmic Ideas}
The three main components of this algorithm are  \emph{(i)} variance reduction via STORM estimator \cite{cutkosky2019momentum}, \emph{(ii)} the FBF method \cite{tseng2000modified} and \emph{(iii)} the Halpern anchoring \citep{halpern1967fixed,yoon2021accelerated}. This algorithm and our analysis in this section are combining the ideas from \citep{pethick2023solving, alacaoglu2025towards} to combine the best-of-both-worlds in each case. Compared to the former work, we show guarantees without bounded variance and compared to the latter, our results can accommodate $\rho >0$, all with the same complexity. 
An earlier work on the FBF variant used in Alg. \ref{alg: var_red_fbf} is \citep{giselsson2021nonlinear}.

In particular, when $\beta_k\equiv 0$ in Alg. \ref{alg: var_red_fbf}, one can see that this algorithm reduces to the one in \cite{pethick2023solving}. When $\tau = 1$, one can also notice the similarity between this algorithm and the classical FBF algorithm of \cite{tseng2000modified}. Indeed, FBF is the same as Alg. \ref{alg: var_red_fbf} when $\beta_k\equiv 0, \tau=1$ and when the estimators are replaced with the full gradients.

As recently observed by \cite{neu2024dealing} for convex-concave min-max problems, including a nonzero $\beta_k$ is important to handle \Cref{asp: 3}. With the selection of $\beta_k = \Theta(1/k)$, the anchoring in the algorithm corresponds to that of the classical Halpern iteration \citep{halpern1967fixed,yoon2021accelerated}.

\subsection{Complexity Analysis}
We state the main result and then the main proof ideas. The complete proof is rather technical and is provided in App. \ref{app: proofs4}. See \Cref{th: vr_supp} for a precise statement and Remark \ref{eq: svx3} for further discussion about the parameters.
\begin{theorem}\label{th: vr}
Let Assumptions \ref{asp: 1}, \ref{asp: 2}, \ref{asp: 3} hold and let $\rho \leq f(\bar\tau)$ for a function $f$ where $f(\bar\tau) \to 1/(16L)$ as $\bar\tau \to 0$ (the precise form of $f(\bar\tau)$ appears in \Cref{th: vr_supp}). Then, for the output of Alg. \ref{alg: var_red_fbf} with $\beta_k = \Theta(1/k)$, $\gamma_k=\Theta(1/L_{\mathrm{exp}})$, $\tau_k = \Theta(1/\sqrt{k})$, we have
\begin{equation*}
\E [\res(\bz^{\out})] \leq \varepsilon \text{~with sfo complexity~} \widetilde{O}(\varepsilon^{-4}),
\end{equation*}
where $\Pr(\hat k=k) = \frac{\tau_k(k+3)}{\sum_{i=0}^{K-1} \tau_i(i+3)}$ and $\bz^{\out} = \bz_{\hat k+1/2}$.
\end{theorem}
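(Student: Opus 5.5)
Our plan is a potential-function analysis combining three ingredients: the Halpern-weighted energy argument used for anchored FBF, the STORM error recursion of \cite{cutkosky2019momentum,pethick2023solving}, and the growth control under \Cref{asp: 3} from \cite{alacaoglu2025towards,neu2024dealing}.

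\textbf{Step 1: an inclusion for the half-iterate.} We set $\mathbf{e}_k := \bg_k - G(\bz_k)$. From the prox step, with $\bar\bz_k$ in place of $\bz_k$ in the identity chain of \Cref{sec: mb_main}, we get
\[
\bu_k := \gamma_k^{-1}(\bar\bz_k - \bz_{k+1/2}) - \bg_k + G(\bz_{k+1/2}) \in (G+\partial r)\bz_{k+1/2}.
\]
The update can then be written as
\[
\bz_{k+1} = \bar\bz_k - \tau_k\gamma_k\bigl(\bu_k + \widetilde G_{k+1/2} - G(\bz_{k+1/2})\bigr).
\]
So $\bz_{k+1}$ is a damped step along a stochastic element of $(G+\partial r)\bz_{k+1/2}$. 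Since $\res(\bz_{k+1/2}) \le \|\bu_k\|$, it suffices to bound a weighted sum of $\E\|\bu_k\|^2$. Writing $\bu_k = \gamma_k^{-1}(\bar\bz_k-\bz_{k+1/2}) - \mathbf{e}_k + (G(\bz_{k+1/2})-G(\bz_k))$ and using Lipschitzness, $\|\bu_k\|^2$ is controlled by $\gamma_k^{-2}\|\bar\bz_k-\bz_{k+1/2}\|^2$, $\|\mathbf{e}_k\|^2$ and $\|\bz_k - \bar\bz_k\|^2 = \beta_k^2\|\bz_k-\bz_0\|^2$.

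\textbf{Step 2: one-step distance inequality.} We expand $\|\bz_{k+1}-\bz^\star\|^2$ around $\bar\bz_k$ and use \eqref{eq: weakminty} with the pair $(\bz_{k+1/2},\bu_k)$ to obtain $\langle \bu_k, \bz_{k+1/2}-\bz^\star\rangle \ge -\rho\|\bu_k\|^2$. The cross term with the fresh noise $\widetilde G_{k+1/2}-G(\bz_{k+1/2})$ has zero conditional mean, and its second moment is bounded by $B^2\|\bz_{k+1/2}-\bz_0\|^2+\sigma^2$. Convexity gives $\|\bar\bz_k-\bz^\star\|^2\le(1-\beta_k)\|\bz_k-\bz^\star\|^2+\beta_k\|\bz_0-\bz^\star\|^2$.

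Combining these, we aim for an inequality in which
\begin{itemize}
\item $-c\,\tau_k\gamma_k^2\|\bu_k\|^2$ appears with $c>0$, which requires $\rho \lesssim \gamma_k(1-O(\tau_k))$ and $\gamma_k = \Theta(1/L)$; this is where $\rho \le f(\bar\tau)\to 1/(16L)$ arises;
\item the errors $\tau_k\gamma_k^2\|\mathbf{e}_k\|^2$ and $\tau_k^2\gamma_k^2\bigl(B^2\|\bz_{k+1/2}-\bz_0\|^2+\sigma^2\bigr)$ appear as additive terms.
\end{itemize}

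\textbf{Step 3: STORM recursion.} Using \Cref{asp: 2} and the same-seed evaluation, the error satisfies
\[
\E\|\mathbf{e}_{k+1}\|^2 \le (1-\alpha_k)^2\E\|\mathbf{e}_k\|^2 + 2L_{\mathrm{exp}}\E\|\bz_{k+1}-\bz_k\|^2 + 2\alpha_k^2\bigl(B^2\E\|\bz_{k+1}-\bz_0\|^2+\sigma^2\bigr).
\]
Here $\|\bz_{k+1}-\bz_k\|^2 = O(\beta_k^2\|\bz_k-\bz_0\|^2+\tau_k^2\gamma_k^2\|\bu_k\|^2+\text{noise})$. We choose $\alpha_k = \Theta(\tau_k^2) = \Theta(1/k)$ and add a multiple $\lambda_k\E\|\mathbf{e}_k\|^2$ to the potential with $\lambda_k \asymp \tau_k\gamma_k^2/\alpha_k$. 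Then the $L_{\mathrm{exp}}\tau_k^2\gamma_k^2\|\bu_k\|^2$ contribution is absorbed by the negative $\|\bu_k\|^2$ term, because $\lambda_k\tau_k^2 = O(\tau_k)$ with a small constant. Ensuring that this constant is small is what forces $\gamma_k=\Theta(1/L_{\mathrm{exp}})$.

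\textbf{Step 4: unrolling and output.} We multiply the potential inequality by $(k+2)(k+3)$, the standard Halpern weights for $\beta_k = 1/(k+2)$, and telescope. This yields
\[
\sum_k \tau_k(k+3)\,\E\|\bu_k\|^2 \lesssim \|\bz_0-\bz^\star\|^2 + \sum_k (k+3)\bigl[\tau_k^2 + \lambda_k\alpha_k^2\bigr]\bigl(B^2 D_k+\sigma^2\bigr),
\]
where $D_k := \max_{j\le k}\E\|\bz_j-\bz_0\|^2$.

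\textbf{Main obstacle: bounding $D_k$.} As in \cite{neu2024dealing,alacaoglu2025towards}, the anchoring keeps the iterates bounded. The unweighted potential satisfies
\[
\Phi_{k+1} \le (1-\beta_k)\Phi_k + \beta_k\|\bz_0-\bz^\star\|^2 + O\bigl((\tau_k^2+\lambda_k\alpha_k^2)(B^2\Phi_k+\sigma^2)\bigr).
\]
Since $\tau_k^2 B^2 = \Theta(\bar\tau^2 B^2/k)$ is smaller than $\beta_k = \Theta(1/k)$ once $\bar\tau$ is small relative to $1/B$, induction gives $\Phi_k \le C(\|\bz_0-\bz^\star\|^2+\sigma^2/B^2)$, and hence $D_k$ is bounded by a constant. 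The delicate point is matching the constants in $\tau_k$ and $\beta_k$; if they do not match, we fall back on a logarithmic-growth bound, which only costs the $\log$ factor in $\widetilde O$.

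With $D_k$ bounded, the right-hand side of Step 4 is $O(\sum_k 1/\sqrt{k}) = O(\sqrt K)$, while $\sum_k \tau_k(k+3) = \Theta(K^{3/2})$. Sampling $\hat k$ with the stated probabilities and applying Jensen's inequality gives
\[
\E[\res(\bz^{\out})] \le \bigl(\E\|\bu_{\hat k}\|^2\bigr)^{1/2} = \widetilde O(K^{-1/2}).
\]
With $3$ oracle calls per iteration, this yields the $\widetilde O(\varepsilon^{-4})$ sfo complexity.
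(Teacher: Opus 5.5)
The step that fails is the choice $\alpha_k=\Theta(\tau_k^2)=\Theta(1/k)$ in Step 3. It makes your weight $\lambda_k\asymp\tau_k\gamma_k^2/\alpha_k\asymp\gamma_k^2/\tau_k$ grow like $\sqrt{k}$. You then account only for the $\bu_k$-part of $2\lambda_{k+1}L_{\mathrm{exp}}\E\|\bz_{k+1}-\bz_k\|^2$. But $\bz_{k+1}-\bz_k=\beta_k(\bz_0-\bz_k)-\tau_k\gamma_k\bigl(\bu_k+\widetilde G_{k+1/2}-G(\bz_{k+1/2})\bigr)$, so this term also contains $\lambda_{k+1}L_{\mathrm{exp}}\tau_k^2\gamma_k^2\bigl(\sigma^2+B^2\E\|\bz_{k+1/2}-\bz_0\|^2\bigr)$. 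That quantity is of order $\tau_k$, the same order as your good term $c\,\tau_k\gamma_k^2\E\|\bu_k\|^2$, yet it does not involve $\bu_k$, so nothing absorbs it. It is also missing from your Step 4 tally, which only carries $\tau_k^2+\lambda_k\alpha_k^2$.

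Weighted by $k+3$, its $\sigma^2$ part sums to $\Theta(K^{3/2})$, the same order as $\sum_k\tau_k(k+3)$. So the weighted average of $\E\|\bu_k\|^2$ is only bounded by a constant. This is not slack in the estimate: with $\alpha_k\asymp\tau_k^2$, the STORM recursion averages increments of variance up to order $L_{\mathrm{exp}}\tau_j^2\gamma_j^2\sigma^2\asymp 1/j$ over a window of length $1/\alpha_k\asymp k$. Hence $\E\|\bg_k-G(\bz_k)\|^2$ need not decay at all. Its $B^2$ part puts a coefficient of order $1/\sqrt{k}$ in front of $\Phi_k$ in your boundedness induction, which beats the contraction $\beta_k=\Theta(1/k)$, so the bound on $D_k$ fails too.

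The remedy is $\alpha_k\asymp\tau_k$, as in the paper ($\alpha_k=2/\sqrt{k+3}$, $\tau_k=\bar\tau/\sqrt{k+3}$). Then the STORM weight is a constant ($6\bar\tau(1-\beta_k)/L^2$ in the paper), and all of these contributions are $O(\tau_k^2+\alpha_k^2)=O(1/k)$. That is comparable to $\beta_k$ with a small constant once $\bar\tau\lesssim L^2/B^2$.

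With that fix, your route is a reasonable variant of the paper's. You extract $-\tau_k\gamma_k^2\|\bu_k\|^2$ directly and prove a separate uniform bound on the iterates. The paper never bounds the iterates separately. It keeps the negative anchoring terms $-\frac{\beta_k(1-\tau_k)}{1+\tau_k}\E\|\bz_0-\bz_{k+1}\|^2$ and $-\frac{2\tau_k\beta_k}{1+\tau_k}\E\|\bz_0-\bz_{k+1/2}\|^2$ and cancels every $B^2$ term against them in place. In your expansion the analogue is the $-\beta_k(1-\beta_k)\|\bz_k-\bz_0\|^2$ that your convexity step drops. The paper also keeps a $\Theta(1)$ negative $\E\|\bz_{k+1}-\bz_k\|^2$ from the FBF identity, which absorbs the STORM increment outright.

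Step 4 also needs repair:
\begin{itemize}
\item The factor $1-\beta_k$ telescopes with linear weights ($k+3$ for $\beta_k=1/(k+3)$, matching the sampling law $\propto\tau_k(k+3)$). The weights $(k+2)(k+3)$ leave an extra $(k+1)\Phi_k$ at every step.
\item With linear weights, the anchor contributes $K\|\bz_0-\bz^\star\|^2$ and $\sum_k(k+3)\tau_k^2=\Theta(K)$. So the right-hand side is $O(K)$, not $O(\sqrt K)$.
\item Hence the weighted average of $\E\|\bu_k\|^2$ is $O(K^{-1/2})$ and $\E[\res(\bz^{\out})]=O(K^{-1/4})$, which is exactly what gives $K=O(\varepsilon^{-4})$. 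The rate $\widetilde O(K^{-1/2})$ you state would give $O(\varepsilon^{-2})$ and contradicts your own conclusion.
\end{itemize}
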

\begin{remark}
This result gives us the simplest algorithmic construction compared to earlier sections. We neither need large mini-batch sizes nor inner loops in this method. The cost is the need for the slightly stronger oracle and the Lipschitzness assumption given in \Cref{asp: 2}, which, in fact, is common for variance reduction \cite{arjevani2023lower,pethick2023solving}. Another limitation compared to Sec. \ref{sec: mlmc} is that  the upper bound for $\rho$ is suboptimal (which is not optimized). 
\end{remark}

\paragraph{Proof sketch. } One critical property of this FBF-based method in \Cref{alg: var_red_fbf} (compared to the algorithm considered in \cite{alacaoglu2025towards}) is the following property (similar to \Cref{subsec: mb_complex}):
Notice that $\frac{1}{\gamma_k}(\bar\bz_k - \bz_{k+1/2}) -  \bg_k + G(\bz_{k+1/2})\in (G+\partial r)(\bz_{k+1/2})$ and hence using \eqref{eq: weakminty}:
\begin{align*}
\gamma_k^{-1}\langle \bar\bz_k - \bz_{k+1/2} - \gamma_k \bg_k, \bz_{k+1/2} - \bz^\star \rangle &+ \langle G(\bz_{k+1/2}), \bz_{k+1/2} - \bz^\star \rangle \\
&\quad\geq -\rho \| \gamma_k^{-1}(\bar\bz_k - \bz_{k+1/2}) - \bg_k +  G(\bz_{k+1/2}) \|^2.
\end{align*}
On the other hand, analyzing the inner products on the left-hand side gives a recursion of the form
\begin{align*}
\E \| \bz^\star -\bz_{k+1}\|^2 \leq (1-\beta_k)\E \| \bz^\star -\bz_{k}\|^2  + \beta_k \| \bz^\star - \bz_0\|^2 -\good_k + \bad_k + O(\tau_k^2)
\end{align*}
where
\begin{align*}
    \good_k &= \Theta(\tau_k)\E \| \bz_k-\bz_{k+1/2}\|^2 + \Theta(\beta_k) \E\|\bz_0-\bz_{k+1}\|^2  + \Theta(\tau_k\beta_k) \E \| \bz_0-\bz_{k+1/2}\|^2 + \Theta(1)\E\|\bz_k-\bz_{k+1}\|^2
    \end{align*}
    and
    \begin{align*}
    \;\;\bad_k  &= \Theta(\tau_k)\E\|\bg_k - G(\bz_k)\|^2 + \Theta\left(\frac{\rho\tau_k}{\gamma_k}\right) \E \| \bar\bz_k - \bz_{k+1/2} - \gamma_k(\bg_k - G(\bz_{k+1/2}))\|^2.
\end{align*}
On a high level, one can see 
\begin{align*}&\frac{\rho\tau_k}{\gamma_k} \E \| \bar\bz_k - \bz_{k+1/2} - \gamma_k(\bg_k - G(\bz_{k+1/2}))\|^2 \\
&\leq \Theta\left( \frac{\rho\tau_k}{\gamma_k} +\rho\tau_k\gamma_kL^2 \right) \E \| \bz_k-\bz_{k+1/2}\|^2 + \Theta\left( \frac{\rho\tau_k\beta_k}{\gamma_k} \right) \E \| \bz_0-\bz_{k+1/2}\|^2 + \Theta\left( \rho\tau_k\gamma_k \right) \E \| \bg_k - G(\bz_k)\|^2
\end{align*}
By selecting the parameters accordingly, when $\rho$ is small enough (see Thm. \ref{th: vr}) and the orders of the parameters are $\tau_k=\Theta(1/\sqrt{k})$, $\beta_k = \Theta(1/k)$, $\gamma_k = \Theta(1/L)$, the error terms can be cancelled.

The final error will be due to the variance error $\mathbb{E}\|\bg_k - G(\bz_k)\|^2$. The control for this comes from the standard bounds on STORM estimator \citep{cutkosky2019momentum}, see Thm. \ref{lem: vr_storm_lem}. Routine calculations help us finish the proof.

\section{Numerical Results}\label{sec: numer}
Our numerical experiments evaluate robustness across three benchmarks. (\emph{Left}) We reproduce the counter-example from \citep[Thm. 4.3]{gorbunov2023convergence} with operator $F(x) = LAx$ where $A$ is a rotation matrix. We set $\rho=\tfrac{1}{2L}$, a regime in which EG is known to lose stability on this instance.
In this setting, EG diverges as expected, whereas Alg. \ref{alg:weakmvi_stoc} converges. (\emph{Middle/Right}) Here, we compare Alg. \ref{alg: var_red_fbf} with \citep[Alg. 1]{pethick2023solving} on the unconstrained quadratic Example~2 of \cite{pethick2023solving} (see \eqref{eq: uq_prob}) with noisy oracle: the middle figure uses noise with Student's $t$-distribution with parameter $\nu=2$ and the right figure uses Laplace noise. For each step size $\gamma$ on a fixed grid, we run 7 independent seeds per method and plot the mean of the results. As $\gamma$ decreases, Alg. \ref{alg: var_red_fbf} remains stable and convergent over a broad range, whereas \citep[Alg. 1]{pethick2023solving} diverges for small $\gamma$, demonstrating stronger robustness of Alg. \ref{alg: var_red_fbf} to noisy oracles. See App. \ref{app: exper} for details.

\begin{figure*}[ht!]
  \centering
  \subfigure{%
    \includegraphics[width=0.32\linewidth]{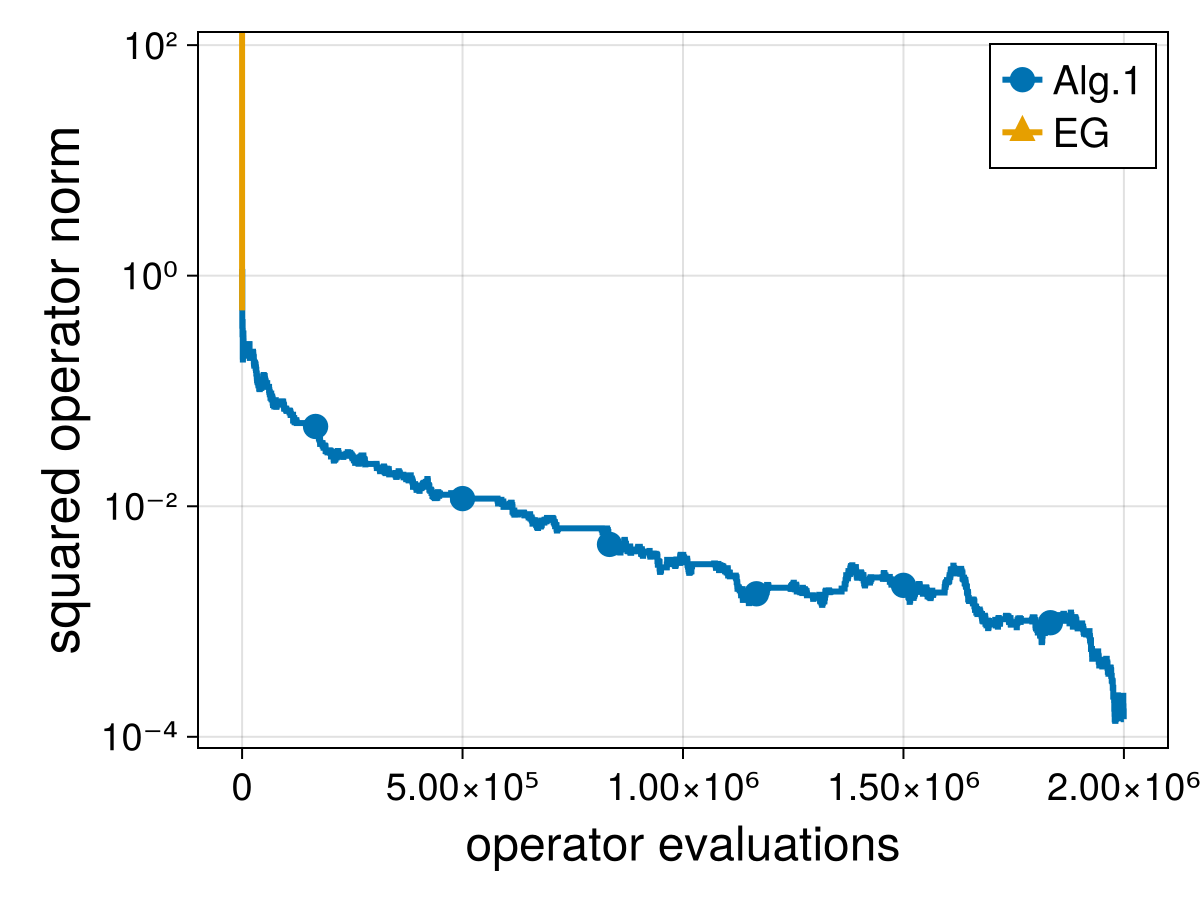}
  }\hfill
  \subfigure{%
    \includegraphics[width=0.32\linewidth]{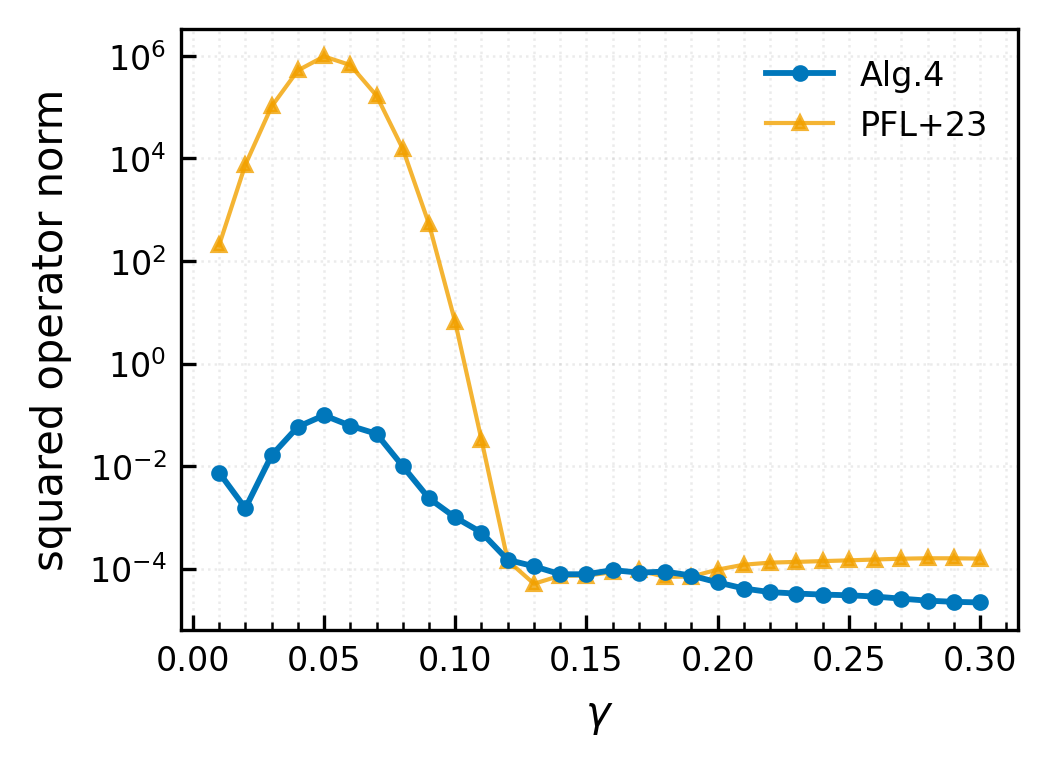}
  }\hfill
  \subfigure{%
    \includegraphics[width=0.32\linewidth]{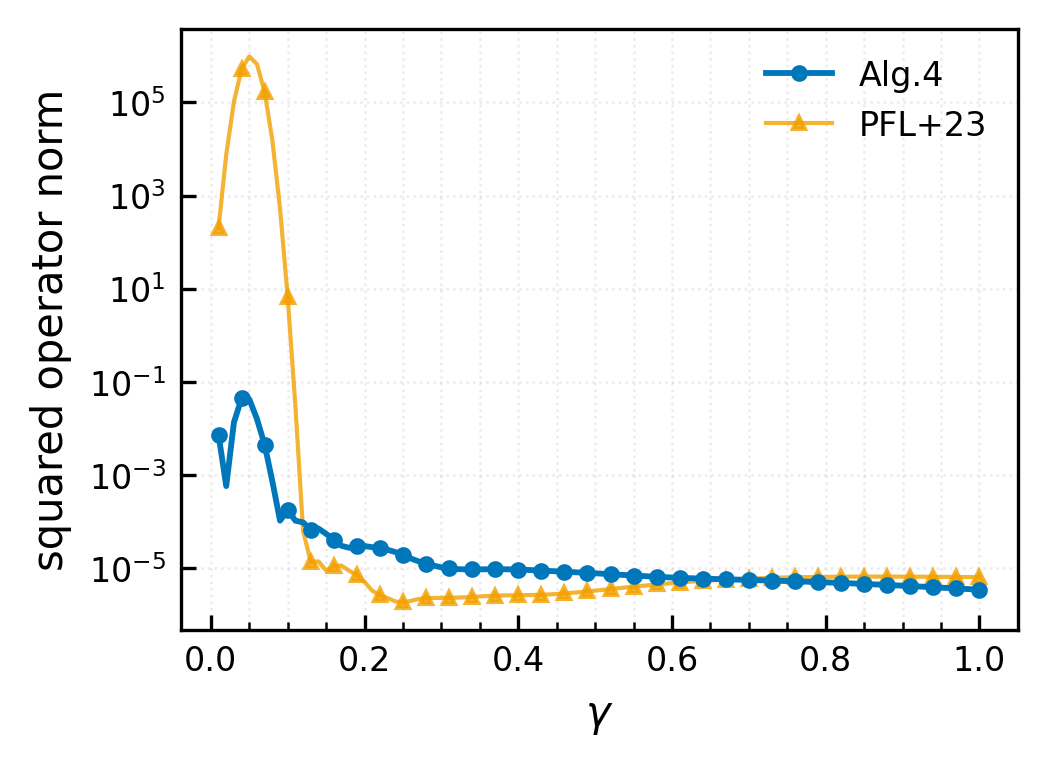}
  }
  \caption{\small{Left: Alg. \ref{alg:weakmvi_stoc} and EG for the counter-example problem (see \eqref{eq: def_rot}). Middle and right: Alg. \ref{alg: var_red_fbf} and the algorithm of \cite{pethick2023solving} for the unconstrained problem in \eqref{eq: uq_prob} with operator noise distributed as Student's t and Laplace-distribution. 
  } 
  } 
  \label{fig:lax-gamma-png}
\end{figure*}

\section{Conclusions}
We provided three different algorithms that all achieve complexity $\widetilde{O}(\varepsilon^{-4})$ for solving nonmonotone stochastic variational inequalities without uniformly bounded variance (which is the best-known complexity, up-to $\log$ terms, even for constrained monotone SVIs with bounded variance). These algorithms provide distinct advantages compared to each other and hence are complementary. An open question is to design and analyze a unified algorithm that can achieve the \emph{best-of-three-worlds}: a single-loop algorithm with the best range for $\rho$ and without large mini-batches. Moreover, for problems satisfying \eqref{eq: weakminty}, it is still unknown whether it is possible to improve $O(\varepsilon^{-4})$ for getting $\mathbb{E}[\res(\bz)]\leq \varepsilon$, even with bounded variance and no constraints.

\section*{Acknowledgments}
Ahmet Alacaoglu acknowledges the support of the Natural Sciences and Engineering Research Council of Canada (NSERC), [funding reference number RGPIN-2025-06634].

\bibliographystyle{plainnat}
\bibliography{lit}

\begin{thebibliography}{44}
\providecommand{\natexlab}[1]{#1}
\providecommand{\url}[1]{\texttt{#1}}
\expandafter\ifx\csname urlstyle\endcsname\relax
  \providecommand{\doi}[1]{doi: #1}\else
  \providecommand{\doi}{doi: \begingroup \urlstyle{rm}\Url}\fi

\bibitem[Alacaoglu et~al.(2023)Alacaoglu, B{\"o}hm, and
  Malitsky]{alacaoglu2023beyond}
Ahmet Alacaoglu, Axel B{\"o}hm, and Yura Malitsky.
\newblock Beyond the golden ratio for variational inequality algorithms.
\newblock \emph{Journal of Machine Learning Research}, 24\penalty0
  (172):\penalty0 1--33, 2023.

\bibitem[Alacaoglu et~al.(2024)Alacaoglu, Kim, and
  Wright]{alacaoglu2024revisiting}
Ahmet Alacaoglu, Donghwan Kim, and Stephen Wright.
\newblock Revisiting inexact fixed-point iterations for min-max problems:
  Stochasticity and structured nonconvexity.
\newblock In \emph{International Conference on Machine Learning}, pages
  840--878. PMLR, 2024.

\bibitem[Alacaoglu et~al.(2025)Alacaoglu, Malitsky, and
  Wright]{alacaoglu2025towards}
Ahmet Alacaoglu, Yura Malitsky, and Stephen~J Wright.
\newblock Towards weaker variance assumptions for stochastic optimization.
\newblock \emph{arXiv:2504.09951}, 2025.

\bibitem[Arjevani et~al.(2023)Arjevani, Carmon, Duchi, Foster, Srebro, and
  Woodworth]{arjevani2023lower}
Yossi Arjevani, Yair Carmon, John~C Duchi, Dylan~J Foster, Nathan Srebro, and
  Blake Woodworth.
\newblock Lower bounds for non-convex stochastic optimization.
\newblock \emph{Mathematical Programming}, 199\penalty0 (1-2):\penalty0
  165--214, 2023.

\bibitem[Asi et~al.(2021)Asi, Carmon, Jambulapati, Jin, and
  Sidford]{asi2021stochastic}
Hilal Asi, Yair Carmon, Arun Jambulapati, Yujia Jin, and Aaron Sidford.
\newblock Stochastic bias-reduced gradient methods.
\newblock \emph{Advances in Neural Information Processing Systems},
  34:\penalty0 10810--10822, 2021.

\bibitem[Bauschke and Combettes(2017)]{bauschke2017convex}
Heinz~H Bauschke and Patrick~L Combettes.
\newblock Convex analysis and monotone operator theory in hilbert spaces.
\newblock \emph{CMS Books in Mathematics}, 2017.

\bibitem[Bauschke et~al.(2021)Bauschke, Moursi, and
  Wang]{bauschke2021generalized}
Heinz~H Bauschke, Walaa~M Moursi, and Xianfu Wang.
\newblock Generalized monotone operators and their averaged resolvents.
\newblock \emph{Mathematical Programming}, 189:\penalty0 55--74, 2021.

\bibitem[Bertsekas(2014)]{bertsekas2014constrained}
Dimitri~P Bertsekas.
\newblock \emph{Constrained optimization and Lagrange multiplier methods}.
\newblock Academic press, 2014.

\bibitem[B{\"o}hm et~al.(2022)B{\"o}hm, Sedlmayer, Csetnek, and
  Bot]{bohm2022two}
Axel B{\"o}hm, Michael Sedlmayer, Erno~Robert Csetnek, and Radu~Ioan Bot.
\newblock Two steps at a time---taking gan training in stride with tseng's
  method.
\newblock \emph{SIAM Journal on Mathematics of Data Science}, 4\penalty0
  (2):\penalty0 750--771, 2022.

\bibitem[Choudhury et~al.(2023)Choudhury, Gorbunov, and
  Loizou]{choudhury2023single}
Sayantan Choudhury, Eduard Gorbunov, and Nicolas Loizou.
\newblock Single-call stochastic extragradient methods for structured
  non-monotone variational inequalities: Improved analysis under weaker
  conditions.
\newblock \emph{Advances in Neural Information Processing Systems},
  36:\penalty0 64918--64956, 2023.

\bibitem[Combettes and Pennanen(2004)]{combettes2004proximal}
Patrick~L Combettes and Teemu Pennanen.
\newblock Proximal methods for cohypomonotone operators.
\newblock \emph{SIAM journal on control and optimization}, 43\penalty0
  (2):\penalty0 731--742, 2004.

\bibitem[Cutkosky and Orabona(2019)]{cutkosky2019momentum}
Ashok Cutkosky and Francesco Orabona.
\newblock Momentum-based variance reduction in non-convex {SGD}.
\newblock \emph{Advances in neural information processing systems}, 32, 2019.

\bibitem[Diakonikolas et~al.(2021)Diakonikolas, Daskalakis, and
  Jordan]{diakonikolas2021efficient}
Jelena Diakonikolas, Constantinos Daskalakis, and Michael~I Jordan.
\newblock Efficient methods for structured nonconvex-nonconcave min-max
  optimization.
\newblock In \emph{International Conference on Artificial Intelligence and
  Statistics}, pages 2746--2754. PMLR, 2021.

\bibitem[Duchi and Namkoong(2021)]{duchi2021learning}
John~C Duchi and Hongseok Namkoong.
\newblock Learning models with uniform performance via distributionally robust
  optimization.
\newblock \emph{The Annals of Statistics}, 49\penalty0 (3):\penalty0
  1378--1406, 2021.

\bibitem[Facchinei and Pang(2003)]{facchinei2003finite}
Francisco Facchinei and Jong-Shi Pang.
\newblock \emph{Finite-dimensional variational inequalities and complementarity
  problems}.
\newblock Springer, 2003.

\bibitem[Giselsson(2021)]{giselsson2021nonlinear}
Pontus Giselsson.
\newblock Nonlinear forward-backward splitting with projection correction.
\newblock \emph{SIAM Journal on Optimization}, 31\penalty0 (3):\penalty0
  2199--2226, 2021.

\bibitem[Gladyshev(1965)]{gladyshev1965stochastic}
EG~Gladyshev.
\newblock On stochastic approximation.
\newblock \emph{Theory of Probability \& Its Applications}, 10\penalty0
  (2):\penalty0 275--278, 1965.

\bibitem[Goodfellow et~al.(2014)Goodfellow, Pouget-Abadie, Mirza, Xu,
  Warde-Farley, Ozair, Courville, and Bengio]{goodfellow2014generative}
Ian~J Goodfellow, Jean Pouget-Abadie, Mehdi Mirza, Bing Xu, David Warde-Farley,
  Sherjil Ozair, Aaron Courville, and Yoshua Bengio.
\newblock Generative adversarial nets.
\newblock \emph{Advances in neural information processing systems}, 27, 2014.

\bibitem[Gorbunov et~al.(2023)Gorbunov, Taylor, Horv{\'a}th, and
  Gidel]{gorbunov2023convergence}
Eduard Gorbunov, Adrien Taylor, Samuel Horv{\'a}th, and Gauthier Gidel.
\newblock Convergence of proximal point and extragradient-based methods beyond
  monotonicity: the case of negative comonotonicity.
\newblock In \emph{International Conference on Machine Learning}, pages
  11614--11641. PMLR, 2023.

\bibitem[Grimmer et~al.(2023)Grimmer, Lu, Worah, and
  Mirrokni]{grimmer2023landscape}
Benjamin Grimmer, Haihao Lu, Pratik Worah, and Vahab Mirrokni.
\newblock The landscape of the proximal point method for nonconvex--nonconcave
  minimax optimization.
\newblock \emph{Mathematical Programming}, 201\penalty0 (1-2):\penalty0
  373--407, 2023.

\bibitem[Halpern(1967)]{halpern1967fixed}
Benjamin Halpern.
\newblock Fixed points of nonexpanding maps.
\newblock \emph{Bulletin of the American Mathematical Society}, 73\penalty0
  (6):\penalty0 957--961, 1967.

\bibitem[Iusem et~al.(2017)Iusem, Jofre, Oliveira, and
  Thompson]{iusem2017extragradient}
AN~Iusem, A~Jofre, RI~Oliveira, and P~Thompson.
\newblock Extragradient method with variance reduction for stochastic
  variational inequalities.
\newblock \emph{SIAM Journal on Optimization}, 27\penalty0 (2):\penalty0
  686--724, 2017.

\bibitem[Khaled and Richt{\'a}rik(2023)]{khaled2022better}
Ahmed Khaled and Peter Richt{\'a}rik.
\newblock Better theory for {SGD} in the nonconvex world.
\newblock \emph{Transactions on Machine Learning Research}, 2023.

\bibitem[Khaled et~al.(2023)Khaled, Sebbouh, Loizou, Gower, and
  Richt{\'a}rik]{khaled2023unified}
Ahmed Khaled, Othmane Sebbouh, Nicolas Loizou, Robert~M Gower, and Peter
  Richt{\'a}rik.
\newblock Unified analysis of stochastic gradient methods for composite convex
  and smooth optimization.
\newblock \emph{Journal of Optimization Theory and Applications}, 199\penalty0
  (2):\penalty0 499--540, 2023.

\bibitem[Korpelevich(1976)]{korpelevich1976extragradient}
Galina~M Korpelevich.
\newblock The extragradient method for finding saddle points and other
  problems.
\newblock \emph{Matecon}, 12:\penalty0 747--756, 1976.

\bibitem[Kotsalis et~al.(2022)Kotsalis, Lan, and Li]{kotsalis2022simpleii}
Georgios Kotsalis, Guanghui Lan, and Tianjiao Li.
\newblock Simple and optimal methods for stochastic variational inequalities,
  ii: Markovian noise and policy evaluation in reinforcement learning.
\newblock \emph{SIAM Journal on Optimization}, 32\penalty0 (2):\penalty0
  1120--1155, 2022.

\bibitem[Lan(2020)]{lan2020first}
Guanghui Lan.
\newblock \emph{First-order and stochastic optimization methods for machine
  learning}, volume~1.
\newblock Springer, 2020.

\bibitem[Lan(2023)]{lan2023policy}
Guanghui Lan.
\newblock Policy mirror descent for reinforcement learning: Linear convergence,
  new sampling complexity, and generalized problem classes.
\newblock \emph{Mathematical programming}, 198\penalty0 (1):\penalty0
  1059--1106, 2023.

\bibitem[Lee and Kim(2021)]{lee2021fast}
Sucheol Lee and Donghwan Kim.
\newblock Fast extra gradient methods for smooth structured
  nonconvex-nonconcave minimax problems.
\newblock \emph{Advances in Neural Information Processing Systems},
  34:\penalty0 22588--22600, 2021.

\bibitem[Li et~al.(2025)Li, Zhu, and So]{li2025nonsmooth}
Jiajin Li, Linglingzhi Zhu, and Anthony Man-Cho So.
\newblock Nonsmooth nonconvex--nonconcave minimax optimization: Primal--dual
  balancing and iteration complexity analysis.
\newblock \emph{Mathematical Programming}, pages 1--51, 2025.

\bibitem[Madry et~al.(2018)Madry, Makelov, Schmidt, Tsipras, and
  Vladu]{madry2018towards}
Aleksander Madry, Aleksandar Makelov, Ludwig Schmidt, Dimitris Tsipras, and
  Adrian Vladu.
\newblock Towards deep learning models resistant to adversarial attacks.
\newblock In \emph{International Conference on Learning Representations}, 2018.

\bibitem[Mertikopoulos(2019)]{mertikopoulos:tel-02428077}
Panayotis Mertikopoulos.
\newblock \emph{{Online optimization and learning in games: Theory and
  applications}}.
\newblock Habilitation {\`a} diriger des recherches, {Grenoble 1 UGA -
  Universit{\'e} Grenoble Alpes}, 2019.
\newblock URL \url{https://inria.hal.science/tel-02428077}.

\bibitem[Mertikopoulos et~al.(2019)Mertikopoulos, Lecouat, Zenati, Foo,
  Chandrasekhar, and Piliouras]{mertikopoulos2019optimistic}
Panayotis Mertikopoulos, Bruno Lecouat, Houssam Zenati, Chuan-Sheng Foo, Vijay
  Chandrasekhar, and Georgios Piliouras.
\newblock Optimistic mirror descent in saddle-point problems: Going the extra
  (gradient) mile.
\newblock In \emph{ICLR 2019-7th International Conference on Learning
  Representations}, pages 1--23, 2019.

\bibitem[Nemirovski et~al.(2009)Nemirovski, Juditsky, Lan, and
  Shapiro]{nemirovski2009robust}
Arkadi Nemirovski, Anatoli Juditsky, Guanghui Lan, and Alexander Shapiro.
\newblock Robust stochastic approximation approach to stochastic programming.
\newblock \emph{SIAM Journal on optimization}, 19\penalty0 (4):\penalty0
  1574--1609, 2009.

\bibitem[Nesterov(2007)]{nesterov2007dual}
Yurii Nesterov.
\newblock Dual extrapolation and its applications to solving variational
  inequalities and related problems.
\newblock \emph{Mathematical Programming}, 109\penalty0 (2):\penalty0 319--344,
  2007.

\bibitem[Neu and Okolo(2024)]{neu2024dealing}
Gergely Neu and Nneka Okolo.
\newblock Dealing with unbounded gradients in stochastic saddle-point
  optimization.
\newblock In \emph{Proceedings of the 41st International Conference on Machine
  Learning}, pages 37508--37530, 2024.

\bibitem[Pethick et~al.(2022)Pethick, Latafat, Patrinos, Fercoq, and
  Cevher]{pethick2022escaping}
Thomas Pethick, Puya Latafat, Panos Patrinos, Olivier Fercoq, and Volkan
  Cevher.
\newblock Escaping limit cycles: Global convergence for constrained
  nonconvex-nonconcave minimax problems.
\newblock In \emph{International Conference on Learning Representations}, 2022.

\bibitem[Pethick et~al.(2023)Pethick, Fercoq, Latafat, Patrinos, and
  Cevher]{pethick2023solving}
Thomas Pethick, Olivier Fercoq, Puya Latafat, Panagiotis Patrinos, and Volkan
  Cevher.
\newblock Solving stochastic weak minty variational inequalities without
  increasing batch size.
\newblock In \emph{International Conference on Learning Representations}, 2023.

\bibitem[Robbins and Monro(1951)]{robbins1951stochastic}
Herbert Robbins and Sutton Monro.
\newblock A stochastic approximation method.
\newblock \emph{The annals of mathematical statistics}, pages 400--407, 1951.

\bibitem[Rockafellar(1976)]{rockafellar1976monotone}
R~Tyrrell Rockafellar.
\newblock Monotone operators and the proximal point algorithm.
\newblock \emph{SIAM journal on control and optimization}, 14\penalty0
  (5):\penalty0 877--898, 1976.

\bibitem[Tseng(2000)]{tseng2000modified}
Paul Tseng.
\newblock A modified forward-backward splitting method for maximal monotone
  mappings.
\newblock \emph{SIAM Journal on Control and Optimization}, 38\penalty0
  (2):\penalty0 431--446, 2000.

\bibitem[Wang and Bertsekas(2015)]{wang2015incremental}
Mengdi Wang and Dimitri~P Bertsekas.
\newblock Incremental constraint projection methods for variational
  inequalities.
\newblock \emph{Mathematical Programming}, 150:\penalty0 321--363, 2015.

\bibitem[Wright and Recht(2022)]{wright2022optimization}
Stephen~J Wright and Benjamin Recht.
\newblock \emph{Optimization for data analysis}.
\newblock Cambridge University Press, 2022.

\bibitem[Yoon and Ryu(2021)]{yoon2021accelerated}
TaeHo Yoon and Ernest~K Ryu.
\newblock Accelerated algorithms for smooth convex-concave minimax problems
  with ${O} (1/k^2)$ rate on squared gradient norm.
\newblock In \emph{International Conference on Machine Learning}, pages
  12098--12109. PMLR, 2021.

\end{thebibliography}

\newpage
\appendix

\onecolumn
\section{Preliminaries}\label{app: prelim}
\paragraph{Notation. } In the sequel, we use the notation $\mathbb{E}_k$ to denote the conditional expectation where we condition on the $\sigma$-algebra generated by all the iterates until $\bz_k$. The notation $\mathbb{E}_{k+1/2}$ is defined similarly. 
\begin{fact}\label{lem: bg_equiv}
    The inequalities \eqref{eq: bg_def} and \eqref{eq: bg_def2} are equivalent up to a redefinition of constants.
\end{fact}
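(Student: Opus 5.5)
The plan is to show each direction separately, using only the triangle inequality and Young's inequality $\|a+b\|^2 \le 2\|a\|^2 + 2\|b\|^2$. The point $\bz^\star$ and the initial point $\bz_0$ are both fixed. So $D := \|\bz_0 - \bz^\star\|^2$ is a deterministic finite constant, and it can be absorbed into the additive variance term.

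\emph{From \eqref{eq: bg_def} to \eqref{eq: bg_def2}.} Suppose $\E\|\gtil(\bz)-G(\bz)\|^2 \le B^2\|\bz-\bz^\star\|^2+\sigma^2$ for all $\bz$. Write $\bz-\bz^\star = (\bz-\bz_0)+(\bz_0-\bz^\star)$ and apply Young's inequality. This gives $\|\bz-\bz^\star\|^2 \le 2\|\bz-\bz_0\|^2 + 2D$. Substituting, we get
\[
\E\|\gtil(\bz)-G(\bz)\|^2 \le 2B^2\|\bz-\bz_0\|^2 + \left(\sigma^2 + 2B^2 D\right),
\]
which is \eqref{eq: bg_def2} with $B'^2 = 2B^2$ and $\sigma'^2 = \sigma^2 + 2B^2\|\bz_0-\bz^\star\|^2$.

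\emph{From \eqref{eq: bg_def2} to \eqref{eq: bg_def}.} The argument is symmetric. We decompose $\bz-\bz_0 = (\bz-\bz^\star)+(\bz^\star-\bz_0)$, which gives $\|\bz-\bz_0\|^2\le 2\|\bz-\bz^\star\|^2+2D$. Hence \eqref{eq: bg_def} holds with $B'^2=2B^2$ and $\sigma'^2=\sigma^2+2B^2\|\bz_0-\bz^\star\|^2$.

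There is no real obstacle here. The only point to note is that the redefined $\sigma'^2$ depends on the (possibly unknown) distance $\|\bz_0-\bz^\star\|$. This matters only for parameter choices that would need $\sigma$ explicitly, so we will keep the form \eqref{eq: bg_def2} (\Cref{asp: 3}) in the algorithms. Also, the bound is needed only pointwise at each query point $\bz$, so the argument applies verbatim if the assumption is stated only at iterates $\bz=\bz_t$ (conditionally on the past).
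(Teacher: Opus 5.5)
Your proof is correct and is the same argument the paper gives. Both directions split $\bz-\bz^\star=(\bz-\bz_0)+(\bz_0-\bz^\star)$ (or the reverse), apply $\|\ba+\bb\|^2\le 2\|\ba\|^2+2\|\bb\|^2$, and absorb $2B^2\|\bz_0-\bz^\star\|^2$ into the additive constant, giving exactly the paper's constants $B'^2=2B^2$ and $\sigma'^2=\sigma^2+2B^2\|\bz_0-\bz^\star\|^2$.
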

\begin{proof}
    Using $\|a+b\|^2 \leq 2\|a\|^2+2\|b\|^2$ and $\bz-\bz^\star = (\bz-\bz_0) + (\bz_0-\bz^\star)$, we have
    \begin{align*}
        \|\bz-\bz^\star\|^2 \leq 2\|\bz-\bz_0\|^2 + 2\|\bz_0-\bz^\star\|^2.
    \end{align*}
    Substituting this into \eqref{eq: bg_def} gives
    \begin{align*}
        \E \| \gtil(\bz) - G(\bz)\|^2 \leq 2B^2 \| \bz-\bz_0 \|^2 + (2B^2 \| \bz_0-\bz^\star \|^2 +\sigma^2),
    \end{align*}
    which matches \eqref{eq: bg_def2} after renaming constants since $\|\bz_0-\bz^\star\|^2$ is a constant.
    
    The reverse direction follows similarly by using $\bz-\bz_0 = (\bz-\bz^\star) + (\bz^\star-\bz_0)$.
\end{proof}

\section{Proofs for Section \ref{sec: mb_main}}\label{app: proofs2}
We start by analyzing one iteration of the algorithm given in \Cref{eq: fbf_stoc}. This is a rather standard analysis in the monotone case, see for example \cite{bohm2022two}. What we have in this lemma is a straightforward generalization that includes the \eqref{eq: weakminty} assumption instead of monotonicity.
\begin{lemma}\label{lem: app1}
Let \Cref{asp: 1} hold. Then we have
\begin{align*}
\E \| \bz^\star - \bz_{k+1} \|^2 &\leq \E \|\bz^\star - \bz_k\|^2 + \left( \frac{2(1+1/\sqrt{6})\rho}{\eta_k}+ \frac{\eta_k L^2}{29}(30\eta_k+62(1+\sqrt{6})\rho) - 1 \right) \E \| \bz_k-\bz_{k+1/2}\|^2 \\
&\quad+ \left( 186(1+\sqrt{6})\rho\eta_k + 60\eta_k^2 \right)\E \| \ghat(\bz_{k+1/2}) - G(\bz_{k+1/2})\|^2 \\
&\quad + \left( 124(1+\sqrt{6})\rho\eta_k + 60\eta_k^2 \right) \E \| \ghat(\bz_{k}) - G(\bz_{k})\|^2.
\end{align*}
\end{lemma}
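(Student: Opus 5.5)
The plan is to run Tseng's one-step FBF analysis, but to replace the monotonicity step by \eqref{eq: weakminty} applied at $\bz_{k+1/2}$. The resulting nonmonotonicity term and the term $\|\bz_{k+1}-\bz_{k+1/2}\|^2$ are then expanded into $\|\bz_k-\bz_{k+1/2}\|^2$ plus the two estimator errors, using Young's inequality with specific weights. Write $e_k=\ghat(\bz_k)-G(\bz_k)$ and $e_{k+1/2}=\ghat(\bz_{k+1/2})-G(\bz_{k+1/2})$.

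\emph{Step 1 (inclusion).} The prox step gives $\eta_k^{-1}(\bz_k-\bz_{k+1/2})-\ghat(\bz_k)\in\partial r(\bz_{k+1/2})$. Combining this with the definition of $\bz_{k+1}$, the vector $\bu_k:=\eta_k^{-1}(\bz_k-\bz_{k+1})-e_{k+1/2}$ lies in $(G+\partial r)(\bz_{k+1/2})$. Then \eqref{eq: weakminty} gives
\[
\langle \bz_k-\bz_{k+1},\bz_{k+1/2}-\bz^\star\rangle \ge \eta_k\langle e_{k+1/2},\bz_{k+1/2}-\bz^\star\rangle-\rho\eta_k\|\bu_k\|^2 .
\]

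\emph{Step 2 (distance recursion).} Use the identity
\[
\|\bz_{k+1}-\bz^\star\|^2=\|\bz_k-\bz^\star\|^2-2\langle \bz_k-\bz_{k+1},\bz_{k+1/2}-\bz^\star\rangle+\|\bz_{k+1}-\bz_{k+1/2}\|^2-\|\bz_k-\bz_{k+1/2}\|^2 .
\]
After taking expectations, the inner product with $e_{k+1/2}$ drops. This is because $\bz_{k+1/2}$ is measurable with respect to $\mathcal F_{k+1/2}$, and $\E_{k+1/2}[e_{k+1/2}]=0$ since the samples $\xi_{k+1/2}^i$ are fresh. The result is \eqref{eq: ajk4}:
\[
\E\|\bz_{k+1}-\bz^\star\|^2\le \E\|\bz_k-\bz^\star\|^2+2\rho\eta_k\E\|\bu_k\|^2+\E\|\bz_{k+1}-\bz_{k+1/2}\|^2-\E\|\bz_k-\bz_{k+1/2}\|^2 .
\]

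\emph{Step 3 (bounding the two positive terms).} From the update,
\[
\bz_{k+1}-\bz_{k+1/2}=-\eta_k\bigl(G(\bz_{k+1/2})-G(\bz_k)\bigr)-\eta_k(e_{k+1/2}-e_k),
\]
so Lipschitzness and Young's inequality give
\[
\|\bz_{k+1}-\bz_{k+1/2}\|^2\le c_1\eta_k^2L^2\|\bz_k-\bz_{k+1/2}\|^2+c_2\eta_k^2\bigl(\|e_k\|^2+\|e_{k+1/2}\|^2\bigr).
\]
Next, $\eta_k\bu_k=(\bz_k-\bz_{k+1/2})+(\bz_{k+1/2}-\bz_{k+1})-\eta_k e_{k+1/2}$. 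Split it with weights $(1+1/\sqrt6)$ on the first piece and $(1+\sqrt6)$ on the remainder, then substitute the previous bound for $\|\bz_{k+1/2}-\bz_{k+1}\|^2$. This yields
\[
2\rho\eta_k\|\bu_k\|^2\le\frac{2(1+1/\sqrt6)\rho}{\eta_k}\|\bz_k-\bz_{k+1/2}\|^2+O\bigl((1+\sqrt6)\rho\eta_k\bigr)\bigl(L^2\eta_k\|\bz_k-\bz_{k+1/2}\|^2+\|e_k\|^2+\|e_{k+1/2}\|^2\bigr).
\]
Collecting the coefficients gives the form stated in Lemma~\ref{lem: app1}.

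\emph{Main obstacle.} The mathematics is routine; the delicate part is the bookkeeping. The Young parameters must be chosen so that the exact numerical constants in the statement appear, namely $\tfrac{\eta_kL^2}{29}(30\eta_k+62(1+\sqrt6)\rho)$, $186(1+\sqrt6)$, $124(1+\sqrt6)$ and $60$. The factor $30/29$ suggests splitting $\|a+b\|^2\le(1+\tfrac1{29})\|a\|^2+30\|b\|^2$, with $a$ the Lipschitz part and $b$ the noise part. Then $\|e_{k+1/2}-e_k\|^2\le2\|e_{k+1/2}\|^2+2\|e_k\|^2$ produces the $60\eta_k^2$ coefficients. I would fix these splits first and then verify that the $\rho$-weighted coefficients match. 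Because this is an upper bound, any slack constants I obtain that are no larger are also acceptable.
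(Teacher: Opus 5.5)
Your proposal is essentially the paper's proof: the same inclusion $\bu_k\in(G+\partial r)(\bz_{k+1/2})$ and three-point identity leading to \eqref{eq: ajk4}, and the same Young splits ($1+1/\sqrt6$ versus $1+\sqrt6$ for $\eta_k\bu_k$, and $30/29$, $60$ for $\|\bz_{k+1}-\bz_{k+1/2}\|^2$). The split of the remainder that you left open is forced to be $\tfrac{31}{30}\|\bz_{k+1/2}-\bz_{k+1}\|^2+31\eta_k^2\|e_{k+1/2}\|^2$ (the paper's $c_2=1/30$), which reproduces $62/29$, $186$ and $124$ exactly; note also that the $L^2$ term inside your $O(\cdot)$ should read $\rho\eta_k L^2\|\bz_k-\bz_{k+1/2}\|^2$, without the extra factor $\eta_k$.
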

\begin{proof}
The definitions of $\bz_{k+1/2}$ and the proximal operator give us that
\begin{align*}
&\bz_{k+1/2} = \arg\min_{\bz} r(\bz) + \frac{1}{2\eta_k} \| \bz-(\bz_k-\eta_k \ghat(\bz_k)) \|^2 \\
 \iff & \frac{1}{\eta_k}(\bz_{k+1/2} - \bz_k) + \ghat(\bz_k) + \partial r(\bz_{k+1/2}) \ni 0 \\
  \iff & G(\bz_{k+1/2}) + \partial r(\bz_{k+1/2}) \ni \frac{1}{\eta_k}(\bz_k-\bz_{k+1/2}) + G(\bz_{k+1/2}) - \ghat(\bz_k) \\
 \iff & G(\bz_{k+1/2}) + \partial r(\bz_{k+1/2}) \ni \frac{1}{\eta_k}(\bz_k - \bz_{k+1}) + G(\bz_{k+1/2}) - \ghat(\bz_{k+1/2}),
\end{align*}
where the last line used the definition of $\bz_{k+1}$.

Then, in view of the assumption \eqref{eq: weakminty}, we have
\begin{align}
&\langle \eta_k^{-1}(\bz_k - \bz_{k+1}) + G(\bz_{k+1/2}) - \ghat(\bz_{k+1/2}), \bz_{k+1/2} - \bz^\star \rangle \notag \\
&\geq -\rho \| \eta_k^{-1}(\bz_k - \bz_{k+1}) + G(\bz_{k+1/2}) - \ghat(\bz_{k+1/2})\|^2,\label{eq: jdp4}
\end{align}
because $(\bz_{k+1/2}, \eta_k^{-1}(\bz_k - \bz_{k+1}) + G(\bz_{k+1/2}) - \ghat(\bz_{k+1/2})) \in \gra(G+\partial r)$.

We now continue with the standard analysis of FBF-type methods \citep{tseng2000modified}.
Let us first note
\begin{align*}
&2\langle \bz_{k} - \bz_{k+1}, \bz_{k+1/2} - \bz^\star \rangle \\
&= 2\langle \bz_{k} - \bz_{k+1}, \bz_{k+1} - \bz^\star \rangle + 2\langle \bz_{k} - \bz_{k+1},\bz_{k+1/2} - \bz_{k+1} \rangle \\
&= \| \bz_k-\bz^\star\|^2 - \| \bz_{k+1} - \bz^\star \|^2 + \| \bz_{k+1}- \bz_{k+1/2} \|^2 - \| \bz_k-\bz_{k+1/2}\|^2.
\end{align*}
Moreover, by $\bz_{k+1/2}$ being deterministic when we condition on the history up to and including $\bz_{k+1/2}$ and also the unbiasedness of $\ghat$, we have
\begin{align*}
&\E_{k+1/2} \langle G(\bz_{k+1/2}) - \ghat(\bz_{k+1/2}), \bz_{k+1/2} - \bz^\star \rangle\\
&=  \langle \E_{k+1/2}[G(\bz_{k+1/2}) - \ghat(\bz_{k+1/2})], \bz_{k+1/2} - \bz^\star \rangle \\
&= 0.
\end{align*}
We now multiply both sides of \eqref{eq: jdp4} by $2\eta_k$, take expectation, use tower property, and then use the last two estimates in \eqref{eq: jdp4} to get
\begin{align}
\E \| \bz_{k+1} - \bz^\star\|^2 &\leq \E \| \bz_{k} - \bz^\star\|^2 + \E\| \bz_{k+1} - \bz_{k+1/2}\|^2 - \E\| \bz_k-\bz_{k+1/2}\|^2 \notag \\
&\quad + \frac{2\rho}{\eta_k} \E\| \bz_k - \bz_{k+1} + \eta_k(G(\bz_{k+1/2}) - \ghat(\bz_{k+1/2})) \|^2. \label{eq: sim4}
\end{align}
Young's inequality gives
\begin{align*}
 &\| \bz_k - \bz_{k+1} + \eta_k(G(\bz_{k+1/2}) - \ghat(\bz_{k+1/2})) \|^2 \\
 &\leq C_1\| \bz_k - \bz_{k+1/2}\|^2 + C_2\| \bz_{k+1/2} - \bz_{k+1}\|^2 + C_3\eta_k^2\| \ghat(\bz_{k+1/2}) - G(\bz_{k+1/2})\|^2.
\end{align*}

where $C_1 = (1+c_1), \, C_2 = (1+\frac{1}{c_1})(1+c_2),\, C_3 = (1+\frac{1}{c_1})(1+\frac{1}{c_2})$ for any positive $c_1, c_2$.

We next use Young's inequality and Lipschitzness of $G$ to obtain
\begin{align*}
&\|\bz_{k+1} - \bz_{k+1/2}\|^2 \\
&= \eta_k^2 \|\ghat(\bz_k) - \ghat(\bz_{k+1/2})\|^2\\
&\leq \eta_k^2(D_1L^2 \|\bz_k - \bz_{k+1/2}\|^2+  D_2\|\ghat(\bz_k)-G(\bz_k)\|^2 +  D_3\|\ghat(\bz_{k+1/2})-G(\bz_{k+1/2})\|^2).
\end{align*}
where $D_1 = (1+d_1), \, D_2 = (1+\frac{1}{d_1})(1+d_2),\, D_3 = (1+\frac{1}{d_1})(1+\frac{1}{d_2})$ for any positive $d_1, d_2$.

Let us pick
\begin{align*}
c_1 = \frac{1}{\sqrt{6}}, ~~~ c_2 = \frac{1}{30}, ~~~ d_1 = \frac{1}{29} ~~~ d_2 = 1.
\end{align*}
Using these values, plugging in the last two estimates into \eqref{eq: sim4} and combining the like terms conclude the proof.
\end{proof}
Let us restate \Cref{th: mb} and the provide its proof.
\begin{theorem}(Detailed restatement of \Cref{th: mb})\label{th: mb_supp}
Let Assumptions \ref{asp: 1} and \ref{asp: 3} hold and suppose that 
\begin{align*}
\rho < \frac{72}{(360+205\sqrt{6})L} \approx \frac{1}{12L}, ~~~ \eta_k = \frac{1}{\sqrt{6}L}, ~~~ \delta = 1-(2+\frac{2}{\sqrt{6}})\frac{\rho}{\eta_k} - \frac{30}{29}\eta_kL^2(\eta_k+\frac{31(1+\sqrt{6})}{15}\rho)
\end{align*}
where $\delta>0$ by definition, and
\begin{align*}
b_k = \bar{b} (k+1)\log^2(k+3), \text{~where~} \bar{b} = \frac{(306+31\sqrt{6})B^2}{2L^2\delta}.
\end{align*}
Then, the algorithm in \eqref{eq: fbf_stoc} with gradient estimators computed as \eqref{eq: fbf_mb} and parameters as above outputs $\bz^\out$ such that
\begin{align*}
\E [\res(\bz^{\out})] \leq \varepsilon \text{~with stochastic oracle complexity~} \widetilde{O}(\varepsilon^{-4}),
\end{align*}
where $\bz^{\out} = \bz_{\hat k + 1/2}$ and $\hat k$ is selected uniformly at random from $\{ 0, \dots, K-1\}$.
\end{theorem}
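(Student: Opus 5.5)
The proof starts from the one-step inequality of \Cref{lem: app1}. With $\eta_k = \frac{1}{\sqrt{6}L}$, the coefficient of $\E\|\bz_k-\bz_{k+1/2}\|^2$ there is exactly $-\delta$ (up to the bookkeeping of the constants, which is how $\delta$ was defined), so $\delta>0$ under the stated bound on $\rho$. The two variance coefficients are $O(\eta_k^2+\rho\eta_k)=O(1/L^2)$. The mini-batch estimators are averages of $b_k$ i.i.d. unbiased samples. Conditioning on $\bz_k$ (resp. on $\bz_{k+1/2}$) and using \Cref{asp: 3} in the form \eqref{eq: bg_def} (equivalent by Fact~\ref{lem: bg_equiv}) gives
\[
\E\|\ghat(\bz_k)-G(\bz_k)\|^2\le \tfrac{1}{b_k}\big(B^2\E\|\bz_k-\bz^\star\|^2+\sigma^2\big),
\]
and the analogous bound at $\bz_{k+1/2}$. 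For the latter I write $\|\bz_{k+1/2}-\bz^\star\|^2\le 2\|\bz_k-\bz^\star\|^2+2\|\bz_k-\bz_{k+1/2}\|^2$. Substituting, I get a recursion
\[
a_{k+1}\le \Big(1+\tfrac{cB^2}{L^2 b_k}\Big)a_k-\Big(\delta-\tfrac{c'B^2}{L^2b_k}\Big)\E\|\bz_k-\bz_{k+1/2}\|^2+\tfrac{c''\sigma^2}{L^2 b_k},
\]
where $a_k=\E\|\bz_k-\bz^\star\|^2$. The choice $\bar b=\frac{(306+31\sqrt6)B^2}{2L^2\delta}$ is meant to make $\frac{c'B^2}{L^2b_k}\le \delta/2$, so that half of the negative term survives.

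The main step, and the one I expect to be the obstacle, is the uniform bound $a_k\le R^2$. Because $b_k=\bar b(k+1)\log^2(k+3)$, we have $\sum_k 1/b_k<\infty$. Dropping the negative term and unrolling gives
\[
a_k\le \prod_{j<k}\Big(1+\tfrac{cB^2}{L^2b_j}\Big)\Big(a_0+\sum_j \tfrac{c''\sigma^2}{L^2b_j}\Big)\le \exp\Big(\sum_j \tfrac{cB^2}{L^2 b_j}\Big)\Big(\|\bz_0-\bz^\star\|^2+\tfrac{C\sigma^2}{L^2\bar b}\Big)=:R^2,
\]
and the factor $B^2/\bar b=O(L^2\delta)$ makes the exponent an absolute constant. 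The care needed is in tracking the numerical constants so that this exponent and the requirement $\frac{c'B^2}{L^2 b_k}\le\delta/2$ are both compatible with the given $\bar b$. The $\log^2$ factor is what makes the series converge. Since $R$ depends on the unknown $\|\bz_0-\bz^\star\|$, it appears only in the final bound and not in the parameters.

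Next I sum the recursion over $k=0,\dots,K-1$ with $a_k\le R^2$ plugged into the variance terms:
\[
\tfrac{\delta}{2}\sum_{k<K}\E\|\bz_k-\bz_{k+1/2}\|^2\le a_0+\sum_k \tfrac{C(B^2R^2+\sigma^2)}{L^2b_k}=O(R^2).
\]
To relate this to the residual, recall from the proof of \Cref{lem: app1} that $\frac{1}{\eta_k}(\bz_k-\bz_{k+1/2})+G(\bz_{k+1/2})-\ghat(\bz_k)\in(G+\partial r)\bz_{k+1/2}$. Hence
\[
\res(\bz_{k+1/2})^2\le 3\eta_k^{-2}(1+\eta_k^2L^2)\|\bz_k-\bz_{k+1/2}\|^2+3\|\ghat(\bz_k)-G(\bz_k)\|^2,
\]
after splitting $G(\bz_{k+1/2})-G(\bz_k)$ and using $L$-Lipschitzness. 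Averaging over the uniform $\hat k$ gives $\E[\res(\bz^\out)^2]=O\big(L^2R^2/K+(B^2R^2+\sigma^2)/(\bar bK)\big)=O(1/K)$, and Jensen's inequality gives $\E[\res(\bz^\out)]=O(K^{-1/2})$. So $K=O(\varepsilon^{-2})$ iterations suffice, and the total oracle count is $\sum_{k<K}2b_k=O(\bar bK^2\log^2K)=\widetilde O(\varepsilon^{-4})$.
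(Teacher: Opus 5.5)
Your proposal is correct and follows the paper's proof essentially step for step. The steps match: the one-step bound of Lemma~\ref{lem: app1} with coefficient exactly $-\delta$, the mini-batch variance bounds absorbed through $\bar b$, a uniform bound on $\E\|\bz_k-\bz^\star\|^2$ from the summability of $1/b_k$ (you unroll with an exponential where the paper invokes \citep[Lemma 5.31]{bauschke2017convex}), telescoping, and the inclusion-based residual bound followed by Jensen. The one cosmetic difference is that centering the variance at $\bz^\star$ via Fact~\ref{lem: bg_equiv} inflates the coefficient of $\E\|\bz_k-\bz_{k+1/2}\|^2$ by a factor $4/3$ relative to the paper's direct three-term split of $\|\bz_{k+1/2}-\bz_0\|^2$. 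With the given $\bar b$, a fixed fraction of $\delta$ somewhat smaller than $\delta/2$ therefore survives, which changes nothing in the conclusion.
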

\begin{remark}
The range of $\rho$ obtained in this result is indeed rather pessimistic. We did not aim to optimize this constant and it can be improved by using, for example, the two step variant from \cite{pethick2022escaping}. That is, one can analyze the algorithm
\begin{align*}
\text{Define~} \widehat{F}(\bz_k) &:= \bz_k - \eta \ghat(\bz_k), \widehat{F}(\bz_{k+1/2}) := \bz_{k+1/2} - \eta \ghat(\bz_{k+1/2}) \\
\bz_{k+1/2} &= J_{\eta \partial r}(\widehat{F}(\bz_k)) \\
\bz_{k+1/2} &= \bz_k - \tau(\widehat{F}(\bz_k) - \widehat{F}(\bz_{k+1/2})),
\end{align*}
where $\tau$ is smaller than $\eta$ proportionally to obtain a better range for $\rho$, by using the insights from \cite{pethick2023solving,pethick2022escaping}.
In either case, the resulting range for $\rho$ will be smaller than the best-known range given in \Cref{sec: mlmc}. As a result, we skip this variant for brevity.
\end{remark}
\begin{proof}
Let us pick
\begin{align*}
\eta_k^2 = \frac{1}{6L^2},
\end{align*}
where we assume that $\rho < \frac{72}{(360+205\sqrt{6})L} \approx \frac{1}{12L}$. This implies that $\delta = 1-(2+\frac{2}{\sqrt{6}})\frac{\rho}{\eta_k} - \frac{30}{29}\eta_kL^2(\eta_k+\frac{31(1+\sqrt{6})}{15}\rho) > 0$ and then the inequality in Lemma~\ref{lem: app1} becomes
\begin{align}
\delta\E \| \bz_k - \bz_{k+1/2} \|^2 &\leq \E \| \bz^\star- \bz_k\|^2 - \E \| \bz^\star - \bz_{k+1}\|^2 \notag \\
&\quad + \left( \frac{10}{L^2} + \frac{31(6+\sqrt{6})\rho}{L} \right) \E \| \ghat(\bz_{k+1/2}) - G(\bz_{k+1/2})\|^2 \notag \\
&\quad + \left( \frac{10}{L^2} + \frac{62(6+\sqrt{6})\rho}{3L} \right) \E \| \ghat(\bz_{k}) - G(\bz_{k})\|^2.\label{eq: hjk4}
\end{align}
We next bound the terms in the last two lines. By using the fact that $\ghat$ is a mini-batch estimator that averages unbiased i.i.d. samples, we have, by standard estimations (see e.g. \citep[Eq. (3.45)]{kotsalis2022simpleii}) that
\begin{align*}
\E\| \ghat(\bz_{k+1/2}) - G(\bz_{k+1/2}) \|^2 &\leq \frac{1}{b_k} \E \| \gtil(\bz_{k+1/2}, \xi_{k+1/2}^{1}) - G(\bz_{k+1/2})\|^2  \\
&\leq \frac{1}{b_k}\left( B^2\E \| \bz_{k+1/2} - \bz_0 \|^2 + \sigma^2  \right) \\
&\leq \frac{3B^2}{b_k} \E \left[ \| \bz_{k+1/2} - \bz_k\|^2 + \| \bz_k - \bz^\star \|^2 + \| \bz^\star - \bz_0\|^2 \right] + \frac{\sigma^2}{b_k}
\end{align*}
and similarly,
\begin{align*}
\E\| \ghat(\bz_{k}) - G(\bz_{k}) \|^2 &\leq \frac{2B^2}{b_k} \E \left[ \| \bz_k - \bz^\star \|^2 + \| \bz^\star - \bz_0\|^2  \right] + \frac{\sigma^2}{b_k}.
\end{align*}
After plugging in the last two estimates into \eqref{eq: hjk4} and using 
\begin{equation*}
\bar{b} = \frac{(306+31\sqrt{6})B^2}{2L^2\delta} \text{~and~} b_k = \bar{b} (k+1)\log^2(k+3),
\end{equation*}

we obtain
\begin{align}
\frac{\delta}{2}\E \| \bz_k - \bz_{k+1/2} \|^2 &\leq \left( 1+\frac{\delta}{(k+1)\log^2(k+3)} \right) \E \| \bz^\star- \bz_k\|^2 - \E \| \bz^\star - \bz_{k+1}\|^2 \notag \\
&\quad + \frac{1}{(k+1)\log^2(k+3)}\left( \delta \| \bz^\star- \bz_0\|^2+ \frac{60\sigma^2}{L^2\bar{b}} \right).\label{eq: hjk5}
\end{align}
First, we discard the nonnegative term on the left-hand side and obtain
\begin{align}
\E \| \bz^\star - \bz_{k+1}\|^2 &\leq \left( 1+\frac{\delta}{(k+1)\log^2(k+3)} \right) \E \| \bz^\star- \bz_k\|^2  \notag \\
&\quad + \frac{1}{(k+1)\log^2(k+3)}\left( \delta \| \bz^\star- \bz_0\|^2+ \frac{60\sigma^2}{L^2\bar{b}} \right).\label{eq: df44}
\end{align}
Note how this recursion is of the same form as \citep[Lemma 5.31]{bauschke2017convex} because $\sum_{k=0}^\infty\frac{1}{(k+1)\log^2(k+3)} < 2$, hence arguing in the same way gives us that 
\begin{equation}\label{eq: zk_unif_bd}
\E \| \bz^\star - \bz_k\|^2\leq C < +\infty.
\end{equation}
for an easily computable constant $C$ that depends on the constants in \eqref{eq: df44}. Then plugging in this uniform upper bound on $\E \| \bz^\star - \bz_k\|^2$ into \eqref{eq: hjk5} gives
\begin{align}\label{eq: zk_zkhalf}
\frac{1}{K}\sum_{k=0}^{K-1} \E \| \bz_k - \bz_{k+1/2} \|^2 = O\left( \frac{1}{K}\right),
\end{align} 
since $\sum_{k=0}^\infty \frac{1}{(k+1)\log^2(k+3)} < 2$.

We finally bound the quantity $\frac{1}{K}\sum_{k=1}^K \E[\res^2(\bz_{k+1/2})]=\frac{1}{K}\sum_{k=1}^K \E[\min_{\bu \in (G+\partial r)\bz_{k+1/2}} \|\bu\|^2]$. For this,  note that
\begin{align*}
&\bz_{k+1/2} = \arg\min_{\bz} r(\bz) + \frac{1}{2\eta_k} \| \bz-(\bz_k-\eta_k \ghat(\bz_k)) \|^2 \\
 \iff &\bz_{k+1/2} + \eta_k \partial r(\bz_{k+1/2}) \ni \bz_k - \eta_k \ghat(\bz_k) \\
 \iff & G(\bz_{k+1/2}) + \partial r(\bz_{k+1/2}) \ni \eta_k^{-1}(\bz_k - \bz_{k+1/2}) + G(\bz_{k+1/2}) - \ghat(\bz_{k}),
\end{align*}
which implies by Young's inequalities that
\begin{align*}
\E[\res^2(\bz_{k+1/2})] &= \E [\min_{\bu \in (G+\partial r)\bz_{k+1/2}}  \| \bu\|^2] \leq 3(\eta_k^{-2}+L^2)\| \bz_{k}-\bz_{k+1/2}\|^2 + 3\|G(\bz_k)-\ghat(\bz_k)\|^2 \\
&\leq 3(\eta_k^{-2}+L^2)\| \bz_{k}-\bz_{k+1/2}\|^2 + \frac{3}{\bar{b}(k+1)\log^2(k+3)}\left(B^2\|\bz_k-\bz_0\|^2 + \sigma^2\right),
\end{align*}
where we used the property of the mini-batch estimator $\widehat{G}(\bz_k)$.

This gives that
\begin{align*}
&\frac{1}{K}\sum_{k=0}^{K-1}\E[\res^2(\bz_{k+1/2})] \\
& \leq \frac{1}{K}\sum_{k=0}^{K-1}\left( 3(\eta_k^{-2}+L^2)\| \bz_{k}-\bz_{k+1/2}\|^2 + \frac{3}{\bar{b}(k+1)\log^2(k+3)}\left(B^2\|\bz_k-\bz_0\|^2 + \sigma^2\right) \right).
\end{align*}
We now plug in \eqref{eq: zk_unif_bd} and \eqref{eq: zk_zkhalf} here (after applying Young's inequality) and use $\sum_{k=0}^\infty \frac{1}{(k+1)\log^2(k+3)} < +\infty$ to obtain 
\begin{align*}
\frac{1}{K}\sum_{k=0}^{K-1}\E[\res^2(\bz_{k+1/2})] = O\left( \frac{1}{K} \right).
\end{align*}
Hence, the number of iterations $K$ to make the left-hand side less than $\varepsilon$ is of the order $\varepsilon^{-2}$.
Moreover, the mini-batch sizes over $\{0, \dots, K-1\}$ are upper bounded by $b_k\leq \bar{b}K\log(K+2)$, giving the stochastic oracle complexity $\widetilde{O}(\varepsilon^{-4})$ for getting
\begin{equation*}
\E [\res^2(\bz_{\hat k + 1/2})] \leq\varepsilon^2,
\end{equation*}
where $\hat k$ is selected uniformly at random from $\{ 0,\dots, K-1\}$. Note also that we use Jensen's inequality to conclude the assertion in the theorem statement since $\E [\res(\bz_{\hat k + 1/2})] \leq \sqrt{\E[\res^2(\bz_{\hat k + 1/2})]}$ where $\bz^{\out} = \bz_{\hat k + 1/2}$.
\end{proof}
\newpage 

\section{Proofs for Section \ref{sec: mlmc}}\label{app: proofs3}
In this case, the main departure from the work of \cite{alacaoglu2024revisiting} is the realization that a bounded variance assumption is not required for their construction to go through. That is, the only place that bounded variance is needed in this paper is for the inner solver used in the MLMC estimator. This inner solver is an operator splitting algorithm applied to a strongly monotone problem.
Thanks to strong monotonicity, it is rather straightforward to handle \Cref{asp: 3} which replaces the bounded variance assumption, with a small change on the parameter choices.
Then we will see that we can use this analysis for the inner solver under \Cref{asp: 3} and obtain the same complexity guarantees as \cite{alacaoglu2024revisiting}, which also allows us to handle problems where $\rho$ has the best-known upper bound.

We start with the analysis of the inner solver. It is worth noting that different operator splitting methods are already analyzed under \Cref{asp: 3} and strong monotonicity, see for example \cite{iusem2017extragradient} and \cite{kotsalis2022simpleii}. We provide a proof for FBF under this case for being self-contained and then we show how to use this result in the construction of \cite{alacaoglu2024revisiting} for getting the final result.

In particular, in the innermost loop of our algorithm, we are solving a strongly monotone inclusion problem. Hence let us write down the abstract problem (and we will see later how to map this back to our original setting)
\begin{equation}\label{eq: aas4}
0\in (A+\mathsf{B})\bx^\star,
\end{equation}
where we have access to $\widetilde{B}$ such that $\E [\widetilde{B}(\bx)] = \mathsf{B}(\bx)$. For terms such as strong monotonicity or maximal monotonicity, we refer to the textbook \cite{bauschke2017convex}. What matters for our purposes is that our subproblem, that is, estimation of $J_{\eta(G+\partial r)}(\bz_k)$ satisfies these assumptions.
\begin{lemma}(Detailed restatement of \Cref{eq: siv4})\label{lem: str_mon}
Let $A$ in \eqref{eq: aas4} be maximally monotone and $\bsf$ be $L_\bsf$-Lipschitz and $\mu$-strongly monotone. Assume that $\E \| \btil(\bx) - \bsf(\bx)\|^2 \leq B^2 \| \bx-\bx_0\|^2 + \sigma^2$.
Let $\tau_t = \frac{4}{(t+1)\mu+144M^2/\mu}$. Then, we have for the output of \Cref{alg:fbf_stoc} that
\begin{equation*}
\E\|\bx^\star-\bx_{T}\|^2\le \frac{3\kappa\|\bx^\star-\bx_0\|^2+282\sigma^2/\mu^2}{T+\kappa},
\end{equation*}
where $\kappa = 144M^2/\mu^2$ with $M=\max(L_\bsf, B)$.
\end{lemma}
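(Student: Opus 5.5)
We follow the standard one-step analysis of stochastic FBF (as in \Cref{lem: app1}), now using strong monotonicity of $\bsf$ in place of \eqref{eq: weakminty}, and then unroll the resulting SGD-type recursion by induction.

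\emph{One-step inequality.} By the definition of $\bx_{t+1/2}$ we have $\tau_t^{-1}(\bx_t-\bx_{t+1/2})-\btil(\bx_t)\in A(\bx_{t+1/2})$ and $-\bsf(\bx^\star)\in A(\bx^\star)$. Monotonicity of $A$ together with $\mu$-strong monotonicity of $\bsf$ therefore gives
\begin{equation*}
\langle \tau_t^{-1}(\bx_t-\bx_{t+1}) + \bsf(\bx_{t+1/2})-\btil(\bx_{t+1/2}), \bx_{t+1/2}-\bx^\star\rangle \ge \mu\|\bx_{t+1/2}-\bx^\star\|^2 .
\end{equation*}
Here we used $\bx_{t+1}=\bx_{t+1/2}-\tau_t(\btil(\bx_{t+1/2})-\btil(\bx_t))$.

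Next we take $\E_{t+1/2}$, under which the noise inner product vanishes. We also use the three-point identity from \Cref{lem: app1}. This yields
\begin{equation*}
\E\|\bx_{t+1}-\bx^\star\|^2 \le \E\|\bx_t-\bx^\star\|^2 - 2\tau_t\mu\E\|\bx_{t+1/2}-\bx^\star\|^2 + \E\|\bx_{t+1}-\bx_{t+1/2}\|^2-\E\|\bx_t-\bx_{t+1/2}\|^2 .
\end{equation*}
We then bound $\|\bx_{t+1}-\bx_{t+1/2}\|^2\le 3\tau_t^2(L_\bsf^2\|\bx_t-\bx_{t+1/2}\|^2+\|\btil(\bx_t)-\bsf(\bx_t)\|^2+\|\btil(\bx_{t+1/2})-\bsf(\bx_{t+1/2})\|^2)$. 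Since $3\tau_t^2L_\bsf^2\le 1/2$ by the choice $\tau_t\le \mu/(36M^2)\le 1/(36L_\bsf)$, half of $-\|\bx_t-\bx_{t+1/2}\|^2$ survives.

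\emph{Handling the variance without bounded variance.} For the variance terms we use the assumption in the form
\begin{equation*}
\E\|\btil(\bx)-\bsf(\bx)\|^2\le 2B^2\|\bx-\bx^\star\|^2+2B^2\|\bx^\star-\bx_0\|^2+\sigma^2 ,
\end{equation*}
evaluated at $\bx=\bx_t$ and $\bx=\bx_{t+1/2}$. The term $6\tau_t^2B^2\|\bx_{t+1/2}-\bx^\star\|^2$ is absorbed into $-\tau_t\mu\|\bx_{t+1/2}-\bx^\star\|^2$ because $\tau_t\le \mu/(6B^2)$. We then lower bound the remaining $-\tau_t\mu\|\bx_{t+1/2}-\bx^\star\|^2$ by $-\tfrac{\tau_t\mu}{2}\|\bx_t-\bx^\star\|^2+\tau_t\mu\|\bx_t-\bx_{t+1/2}\|^2$, which is absorbed by the leftover $-\tfrac12\|\bx_t-\bx_{t+1/2}\|^2$ since $\tau_t\mu\le 1/4$. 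The term $6\tau_t^2B^2\|\bx_t-\bx^\star\|^2$ is absorbed into $-\tfrac{\tau_t\mu}{2}\|\bx_t-\bx^\star\|^2$ with a factor to spare, again by the $144M^2/\mu$ in the denominator of $\tau_t$. The result is the recursion
\begin{equation*}
e_{t+1}\le (1-\tfrac{\tau_t\mu}{4})e_t + c\,\tau_t^2(\sigma^2+B^2\|\bx^\star-\bx_0\|^2),
\end{equation*}
where $e_t=\E\|\bx_t-\bx^\star\|^2$.

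\emph{Induction.} With $\tau_t=4/(\mu(t+1+\kappa))$ we have $1-\tau_t\mu/4=(t+\kappa)/(t+1+\kappa)$. Multiplying by $(t+1+\kappa)$ and telescoping gives
\begin{equation*}
(T+\kappa)e_T\le \kappa e_0+\sum_t \frac{16c(\sigma^2+B^2\|\bx^\star-\bx_0\|^2)}{\mu^2(t+1+\kappa)} .
\end{equation*}
This naive sum produces a logarithm, so to get the stated clean bound we instead verify $e_t\le D/(t+\kappa)$ by induction with $D=3\kappa e_0+282\sigma^2/\mu^2$. The $B^2\|\bx^\star-\bx_0\|^2$ error is charged to the $\kappa e_0$ part, using $B^2/\mu^2\le\kappa/144$; this is what produces the factor $3$ in front of $\kappa e_0$.

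The main obstacle is the constant bookkeeping. We must show that the step-size constraints ($\tau_t\lesssim\mu/M^2$) simultaneously kill the $L_\bsf$ and $B$ terms, and that the induction step closes with exactly the constants $3$ and $282$. To close it we would use a slightly sharper contraction factor, $1-\tau_t\mu/2$ in place of $1-\tau_t\mu/4$, obtained by a tighter Young split.
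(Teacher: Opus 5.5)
Your route is the paper's: the one-step FBF inequality from strong monotonicity of $A+\bsf$, Young plus Lipschitzness on $\|\bx_{t+1}-\bx_{t+1/2}\|^2$, the relaxed variance bound split through $\bx^\star$ so the $B^2$ terms are absorbed by the strong-monotonicity gain, then an induction on $e_t\le D/(t+\kappa)$. But as written it does not prove the statement, and it fails exactly where you flag it.

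\textbf{The gap.} The recursion you actually derive has contraction $1-\tau_t\mu/4=1-\frac{1}{t+1+\kappa}$. Write a recursion as $e_{t+1}\le(1-\frac{a}{A+1})e_t+\frac{C}{(A+1)^2}$ with $A=t+\kappa$. The induction step $(1-\frac{a}{A+1})\frac{D}{A}+\frac{C}{(A+1)^2}\le\frac{D}{A+1}$ is equivalent to $CA\le(a-1)(A+1)D$. At $a=1$ this fails for every $D$ as soon as $C>0$. So your last sentence, the sharper factor ``by a tighter Young split,'' carries the whole log-free $O(1/T)$ conclusion and is only asserted. The constants $3$ and $282$ also cannot be checked until $a$ and $C$ are pinned down.

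\textbf{The repair works.} Two of your steps are wasteful.
\begin{enumerate}
\item You spend a full $\tau_t\mu$ to absorb $6\tau_t^2B^2\|\bx_{t+1/2}-\bx^\star\|^2$. But $\tau_t\le\mu/(36M^2)$ gives $6\tau_t^2B^2\le\tau_t\mu/6$.
\item Since $\tau_t\mu\le 1/36$ and $3\tau_t^2L_\bsf^2\le 3/1296$, you can use the unbalanced split $\|\bx_{t+1/2}-\bx^\star\|^2\ge\frac{1}{1+c}\|\bx_t-\bx^\star\|^2-\frac{1}{c}\|\bx_t-\bx_{t+1/2}\|^2$ with $c=1/10$. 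The coefficient of $\|\bx_t-\bx_{t+1/2}\|^2$ is then at most $-1+\frac{3}{1296}+\frac{20}{36}<0$.
\end{enumerate}
With your own variance bounds this gives
\[
e_{t+1}\le\bigl(1-\tfrac{3}{2}\tau_t\mu\bigr)e_t+\tau_t^2\bigl(12B^2\|\bx^\star-\bx_0\|^2+6\sigma^2\bigr),
\]
so $a=6$. Using $B^2/\mu^2\le\kappa/144$, you get $C=\frac{4}{3}\kappa\|\bx^\star-\bx_0\|^2+96\sigma^2/\mu^2\le 5D$ for $D=3\kappa\|\bx^\star-\bx_0\|^2+282\sigma^2/\mu^2$. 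Together with the base case $e_0\le D/\kappa$, the induction closes. Even your suggested factor $1-\tau_t\mu/2$ (i.e.\ $a=2$) would suffice, since then $C\le D$ is enough.

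\textbf{How the paper avoids the issue.} Its starting inequality puts the strong-monotonicity gain on $\bx_{t+1}$ instead of $\bx_t$. It has $(\frac{1}{2\tau_t}+\frac{\mu}{2})\E\|\bx^\star-\bx_{t+1}\|^2$ on the left and $\frac{19}{36\tau_t}$ in front of $\E\|\bx_{t+1}-\bx_{t+1/2}\|^2$. After absorbing the $B^2$ terms via $8\tau_tB^2\le\mu/4$, it obtains the contraction $\frac{t+3+\kappa}{t+5+\kappa}=1-\frac{2}{t+5+\kappa}$. That is what lets its induction with $d=1.1$ close.
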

\begin{remark}
Note that the above lemma is written for an arbitrary problem of finding $\bx^\star$ such that $0\in (A+\bsf)\bx^\star$. However, in our particular case, our subproblem (that is, finding the resolvent $J_{\eta(G+\partial r)}(\bz_k)$) is finding $\bx$ such that $0\in (\id+\eta G)\bx + \eta \partial r(\bx) - \bz_k$, hence in the notation of the above statement, we have $\bx^\star = J_{\eta(G+\partial r)}(\bz_k)$ and then the left-hand side of the statement becomes $\E\| \bx_T - J_{\eta(G+\partial r)}(\bz_k)\|^2$. The operators are mapped as $A = \eta \partial r$ and $\bsf(\cdot) = (\id+\eta G)(\cdot) - \bz_k$.
\end{remark}
\begin{proof}
For brevity, let us denote $L=L_\bsf$ in this proof. We proceed as \citep[Theorem C.1, until Equation (50)]{alacaoglu2024revisiting}. By using strong monotonicity of $G$ and monotonicity of $\partial r$, one obtains
\begin{align}
    \left( \frac{1}{2\tau_t} + \frac{\mu}{2} \right) \E \| \bx^\star - \bx_{k+1} \|^2 &\leq \frac{1}{2\tau_t} \E \| \bx^\star - \bx_k \|^2 + \frac{19}{36\tau_t} \E \| \bx_{t+1} - \bx_{t+1/2}\|^2 \notag \\
     &\quad-\frac{1}{2\tau_t} \E\|\bx_t - \bx_{t+1/2}\|^2,\label{eq: scx4}
\end{align}
since $\eta_t \mu < 1/36$.

We estimate using Young's inequalities, Lipschitzness of $\bsf$, and \Cref{asp: 3} to obtain
\begin{align*}
    &\E\| \bx_{t+1} - \bx_{t+1/2}\|^2 \leq \tau_t^2 \| \btil(\bx_t) - \btil(\bx_{t+1/2})\|^2 \\
    &\leq 3\tau_t^2 \E\left( \| \btil(\bx_t) - \bsf(\bx_t)\|^2 + \| \btil(\bx_{t+1/2}) - \bsf(\bx_{t+1/2})\|^2 + \| \bsf(\bx_t) - \bsf(\bx_{t+1/2})\|^2 \right) \\
    &\leq 3\tau_t^2  B^2 \E\| \bx_t - \bx_0\|^2 + 3\tau_t^2  B^2 \E\| \bx_{t+1/2} - \bx_0\|^2 + 6\tau_t^2 \sigma^2 + 3\tau_t^2 L^2 \E\| \bx_t-\bx_{t+1/2}\|^2\\
    &\leq 6\tau_t^2 B^2 \left(\E\| \bx_t- \bx^\star\|^2 + \| \bx_0-\bx^\star\|^2 \right) \\
    &\quad+ 9\tau_t^2 B^2 \left( \E\| \bx_{t} - \bx^\star\|^2 + \E\| \bx_t - \bx_{t+1/2}\|^2 + \| \bx_0-\bx^\star\|^2 \right) \\
    &\quad + 3\tau_t^2L^2\E\|\bx_t-\bx_{t+1/2}\|^2+ 6\tau_t^2\sigma^2.
\end{align*}
We plug this into \eqref{eq: scx4}, use $\frac{19}{36\tau_t} \times 15\tau_t^2B^2 < 8\tau_t B^2$, and the notation $M = \max(L, B)$ to get
\begin{align*}
    \left( \frac{1}{2\tau_t} + \frac{\mu}{2} \right) \E\|\bx^\star - \bx_{t+1}\|^2 &\leq \left( \frac{1}{2\tau_t} + 8\tau_t B^2 \right) \E\| \bx^\star - \bx_t\|^2 + 8\tau_tB^2 \| \bx_0-\bx^\star\|^2 \\
    &\quad+ \left( 10\tau_tM^2 - \frac{1}{2\tau_t} \right)\E\| \bx_t-\bx_{t+1/2}\|^2 + 5\tau_t \sigma^2.
\end{align*}
To simplify, let us assume that $8\tau_t B^2 \leq \frac{\mu}{4}$ and define $\gamma_t$ such that
\begin{equation}\label{eq: shd5}
\frac{1}{2\gamma_t} = \frac{1}{2\tau_t} + \frac{\mu}{4}.
\end{equation}
Then, we can equivalently write the previous recursion as
\begin{align}
    \left( \frac{1}{2\gamma_t} + \frac{\mu}{4} \right) \|\bx^\star - \bx_{t+1}\|^2 &\leq  \frac{1}{2\gamma_t} \| \bx^\star - \bx_t\|^2 + 8\tau_tB^2 \| \bx_0-\bx^\star\|^2 \notag \\
    &\quad+ \left( 10\tau_tM^2 - \frac{1}{2\eta_t} \right)\| \bx_t-\bx_{t+1/2}\|^2 + 5\tau_t \sigma^2.\label{eq: asx4}
\end{align}
Hence, the main recursion we have is very similar to \citep[Eq. (51)]{alacaoglu2024revisiting}.

Assume now
\begin{equation*}
10\tau_tM^2 \leq \frac{1}{2\tau_t} \iff 20\tau_t^2 M^2 \leq 1.
\end{equation*}
Let us now set 
\begin{equation*}
\gamma_t = \frac{4}{(t+3)\mu+144M^2/\mu}, \text{~} \frac{1}{2\gamma_t} = \frac{(t+3)\mu+144M^2/\mu}{8} \text{~and~} \frac{1}{2\gamma_t} + \frac{\mu}{4} = \frac{(t+5)\mu+144M^2/\mu}{8}.
\end{equation*}
Let us note that this also gives the following for $\tau_t$ (see \eqref{eq: shd5}):\begin{equation*}
\frac{1}{2\tau_t} = \frac{(t+1)\mu+144M^2/\mu}{8} \iff \tau_t = \frac{4}{(t+1)\mu+144M^2/\mu}.
\end{equation*}
Notice how both of the following requirements are satisfied with this choice of $\tau_t$ (by recalling that $M=\max(L, B)$):
\begin{equation*}
20\tau_t^2M^2 \leq 1\text{~~~and~~~} 8\tau_tB^2 \leq \frac{\mu}{4}.
\end{equation*}
Next, multiply \eqref{eq: asx4} by $\left(\frac{1}{2\gamma_t} + \frac{\mu}{4}\right)^{-1} = \frac{8}{(t+5)\mu+144M^2/\mu}$ and get
\begin{align*}
\|\bx^\star-\bx_{t+1}\|^2 &\leq \frac{(t+3)\mu+144M^2/\mu}{(t+5)\mu+144M^2/\mu} \| \bx^\star-\bx_t\|^2 + \frac{64\tau_tM^2}{(t+5)\mu+144M^2/\mu} \| \bx_0-\bx^\star\|^2 \\
&\quad+ \frac{64}{(t+5)\mu+144M^2/\mu}\tau_t\sigma^2.
\end{align*}
Plugging in $\tau_t$ gives
\begin{align*}
\|\bx^\star-\bx_{t+1}\|^2 &\leq \frac{(t+3)\mu+144M^2/\mu}{(t+5)\mu+144M^2/\mu} \| \bx^\star-\bx_t\|^2 \\
&\quad + \frac{256M^2}{((t+5)\mu+144M^2/\mu)((t+1)\mu+144M^2/\mu)} \| \bx_0-\bx^\star\|^2 \\
&\quad + \frac{256}{((t+5)\mu+144M^2/\mu)((t+1)\mu+144M^2/\mu)}\sigma^2.
\end{align*}
An equivalent way to write this, by letting $\kappa=144M^2/\mu^2$ is
\begin{align*}
\|\bx^\star-\bx_{t+1}\|^2 &\leq \frac{t+3+\kappa}{t+5+\kappa} \| \bx^\star-\bx_t\|^2 \\
&\quad + \frac{2\kappa}{(t+5+\kappa)(t+1+\kappa)} \| \bx_0-\bx^\star\|^2 \\
&\quad + \frac{256\sigma^2/\mu^2}{(t+5+\kappa)(t+1+\kappa)}.
\end{align*}
We now prove by induction that
\begin{equation*}
\E\|\bx^\star-\bx_{t}\|^2\le \frac{a\kappa\|\bx^\star-\bx_0\|^2+bG^2/\mu^2}{t+\kappa},
\end{equation*}
with $a=3$ and $b=282$.

The base case $t=0$ holds trivially when $a\geq 1$.
We want to show that
\begin{align*}
\| \bx^\star-\bx_{T+1}\|^2 &\leq \frac{T+3+\kappa}{T+5+\kappa} \frac{a\kappa\|\bx^\star-\bx_0\|^2 + b\sigma^2/\mu^2}{T+\kappa}  + \frac{2\kappa}{(T+5+\kappa)(T+1+\kappa)} \| \bx^\star-\bx_0\|^2 \\
&\quad + \frac{256\sigma^2/\mu^2}{(T+5+\kappa)(T+1+\kappa)}.
\end{align*}
For some constant $d > 1$, let us have $\frac{2}{a}\leq \frac{1}{d}$ and $\frac{256}{b}\leq \frac1d$, and then the previous bound implies
\begin{align*}
\| \bx^\star-\bx_{T+1}\|^2 \leq \frac{1}{T+5+\kappa} \left(\frac{T+3+\kappa}{T+\kappa} + \frac{1}{d(T+1+\kappa)}\right) \left(a\kappa\|\bx^\star-\bx_0\|^2 + b\sigma^2/\mu^2 \right)
\end{align*}
and then we wish to find $d$ such that (after setting $A = T+\kappa=T+144M^2/\mu^2$ and where $A \geq 144$)
\begin{align*}
&\frac{A+3}{A(A+5)} + \frac{1}{d(A+1)(A+5)} \leq \frac{1}{A+1} \iff d(A+1)(A+3) + A \leq dA(A+5) \\
&\iff 4A d + 3d + A \leq 5A d \iff  3d+A \leq dA \iff 3d \leq (d-1)A,
\end{align*}
which is satisfied when $d=1.1$ since this would require $3.3 \leq 0.1A \iff 33\leq A$ which is true because $A =T+\kappa \geq \kappa \geq 144$. We then use the bounds for $a, b$ as
\begin{equation*}
a \geq 2.2\text{~~~and~~~} b \geq 256\times 1.1,
\end{equation*}
which are satisfied with $a=3, b=282$.

The proof is completed.
\end{proof}

\subsection{Results for the MLMC estimator}
We now continue with the results related to the MLMC estimator, taken from \cite{alacaoglu2024revisiting}. The only difference will be the change in the complexity analysis inner solver used for estimating the resolvent (that is, the proximal subproblem). As a result, the only change in the lemmas we cite below are the constants.

For brevity, let us denote the upper bound in Lemma~\ref{lem: str_mon} as
\begin{align}\label{eq: rate_bd}
C = 3\kappa\|\bx^\star-\bx_0\|^2+282\sigma^2/\mu^2, ~~~ C_1 = 3\kappa, ~~~ C_2 = 282/\mu^2.
\end{align}
where $\kappa = 144M^2/\mu^2$.
The lemmas below have identical proofs to the corresponding results we cite from \cite{alacaoglu2024revisiting} with the minor differences of having the constants in Lemma~\ref{lem: str_mon} instead of \citep[Theorem C.1]{alacaoglu2024revisiting}. Hence, we do not repeat their proofs and just refer to \cite{alacaoglu2024revisiting}.

\begin{lemma}(\citep[Lemma C.9]{alacaoglu2024revisiting}, \citep[Property 1]{asi2021stochastic})\label{lem: mlmc1}
Under the setting of Lemma~\ref{lem: str_mon}, we have, for the output of \Cref{alg:fbf_mlmc}, that
\begin{align*}
\| \E[\by^{\out}] - \by^\star \|^2 &\leq \frac{2C}{N},\\
\E \| \by^\out - \by^\star \|^2 &\leq 14C\log_2N.
\end{align*}
where $C$ is as defined in \eqref{eq: rate_bd} and the number of calls to $\btil$ is $O(\log_2N)$.
\end{lemma}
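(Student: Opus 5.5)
The argument follows the standard MLMC template of \citep[Property 1]{asi2021stochastic}, with \Cref{lem: str_mon} as the only input about the inner solver. Write $\by^i = \texttt{FBF}(\bx_0, 2^i,\cdot)$ and $\by^\star$ for the exact solution. By \Cref{lem: str_mon}, with $\kappa\ge 1$, we have $\E\|\by^i-\by^\star\|^2 \le C/(2^i+\kappa) \le C 2^{-i}$ for every $i\ge 0$. Let $i_{\max}=\lfloor \log_2 N\rfloor$ and $p_i=\Pr(I=i)=2^{-i}$ for $i\ge1$, with the convention that $I=0$ (or $2^I>N$) returns $\by^0$.

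\textbf{Bias.} Condition on $I$. Since $\E[2^I(\by^I-\by^{I-1})\mathbb{1}\{1\le I\le i_{\max}\}] = \sum_{i=1}^{i_{\max}} (\E\by^i-\E\by^{i-1})$, the sum telescopes and gives $\E[\by^{\out}]=\E[\by^{i_{\max}}]$. Jensen's inequality then yields $\|\E\by^{\out}-\by^\star\|^2 \le \E\|\by^{i_{\max}}-\by^\star\|^2 \le C2^{-i_{\max}} \le 2C/N$, using $2^{i_{\max}}>N/2$.

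\textbf{Second moment.} Use $\|a+b\|^2\le 2\|a\|^2+2\|b\|^2$ to split
\[
\E\|\by^\out-\by^\star\|^2 \le 2\E\|\by^0-\by^\star\|^2 + 2\sum_{i=1}^{i_{\max}} p_i 4^i\,\E\|\by^i-\by^{i-1}\|^2 .
\]
For the difference terms,
\[
\E\|\by^i-\by^{i-1}\|^2\le 2\E\|\by^i-\by^\star\|^2+2\E\|\by^{i-1}-\by^\star\|^2\le 2C2^{-i}+2C2^{-(i-1)} = 6C2^{-i}.
\]
Hence each summand is at most $2^{-i}4^i\cdot 6C2^{-i}=6C$, and the sum contributes at most $12C\,i_{\max}$. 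The first term is at most $2C$. Altogether this gives $2C+12C\log_2 N\le 14C\log_2N$ for $N\ge2$. If $N=1$, the output is always $\by^0$, which needs a separate trivial case; any constant slack can be absorbed.

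\textbf{Cost.} Computing $\by^I$ and $\by^{I-1}$ costs $O(2^I)$ oracle calls. The expected cost is therefore $\sum_{i\le i_{\max}} 2^{-i}O(2^i)=O(i_{\max})=O(\log_2 N)$.

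\textbf{Obstacle.} The main subtlety is that $\by^I$ and $\by^{I-1}$ must be built from the same FBF trajectory (the $2^{I-1}$-step run is a prefix of the $2^I$-step one) so that the cost stays $O(2^I)$. The estimates above do not require any coupling, since they only use the triangle inequality. A second point to check is that \Cref{lem: str_mon} applies uniformly in $T=2^i$; the bound $C/(T+\kappa)\le C/T$ is what makes the telescoping and geometric weights balance exactly.
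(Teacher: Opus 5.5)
Your proof is correct and is essentially the argument the paper defers to (\citep[Property 1]{asi2021stochastic} via \citep[Lemma C.9]{alacaoglu2024revisiting}, with Lemma~\ref{lem: str_mon} supplying $\E\|\by^i-\by^\star\|^2\le C/(2^i+\kappa)\le C2^{-i}$): the bias telescopes to $\E[\by^{i_{\max}}]$ and gives $2C/N$, and the geometric weights give $2C+12C\log_2 N\le 14C\log_2 N$, which are exactly the stated constants. Two minor corrections to your side remarks: the case $N=1$ cannot be absorbed, since the right-hand side $14C\log_2 1$ vanishes, so the statement implicitly assumes $N\ge 2$ (true for the $N_k$ used later); and sharing the FBF trajectory is not needed for the $O(2^I)$ cost either, since independent runs of $2^I$ and $2^{I-1}$ steps still cost $O(2^I)$ oracle calls.
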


\begin{lemma}(\citep[Corollary C.10]{alacaoglu2024revisiting}, \citep[Theorem 1]{asi2021stochastic})\label{lem: mlmc2}
Let $\widetilde{J}_{\eta(G+\partial r)}(\bz_k)$ be as defined in \Cref{alg:weakmvi_stoc}. Under the setting of Lemma~\ref{lem: str_mon}, we have for any $b_k, v$ that
\begin{align*}
\| \E[\widetilde{J}_{\eta(G+\partial r)}(\bz_k)] - J_{\eta(G+\partial r)}(\bz_k) \|^2 &\leq b_k^2(\|(\id-J_{\eta(G+\partial r)})(\bz_k)\|^2 + \sigma^2),\\
\E \| \widetilde{J}_{\eta(G+\partial r)}(\bz_k) - J_{\eta(G+\partial r)}(\bz_k) \|^2 &\leq v^2(\|(\id-J_{\eta(G+\partial r)})(\bz_k)\|^2 + \sigma^2),
\end{align*}
with the parameters of Alg. \ref{alg:weakmvi_stoc} selected as
\begin{align}\label{eq: sar4}
N_k = \left\lceil \frac{\max\{ 2C_1, 2C_2\}}{\min\{b_k^2, v^2/2\}} \right\rceil, \text{~and~} M_k =  \left\lceil \frac{28\max\{ C_1, C_2\} \log_2 N_k }{v^2} \right\rceil
\end{align}
where $C_1, C_2$ are as defined in \eqref{eq: rate_bd}. The number of calls to the stochastic first-order oracle at each iteration is $O((\log N_k)\cdot M_k)$ in expectation.
\end{lemma}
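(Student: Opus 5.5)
The plan is to reduce everything to Lemma~\ref{lem: mlmc1}, applied conditionally on $\bz_k$ to the proximal subproblem, and then to exploit the fact that averaging $M_k$ independent copies leaves the bias unchanged while dividing the variance by $M_k$.

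\textbf{Step 1 (identify the constants).} Condition on $\bz_k$. By the remark after Lemma~\ref{lem: str_mon}, the subproblem has $A=\eta\partial r$, $\bsf=(\id+\eta G)-\bz_k$, initial point $\bx_0=\bz_k$ and solution $\by^\star=J_{\eta(G+\partial r)}(\bz_k)$. Hence $\|\bx^\star-\bx_0\|^2=\|(\id-J_{\eta(G+\partial r)})(\bz_k)\|^2$, and the constant in \eqref{eq: rate_bd} satisfies
\[
C=C_1\|(\id-J_{\eta(G+\partial r)})(\bz_k)\|^2+C_2\sigma^2\le \max\{C_1,C_2\}\big(\|(\id-J_{\eta(G+\partial r)})(\bz_k)\|^2+\sigma^2\big).
\]
Two points need checking here. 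First, the noise of $\btil=\id+\eta\gtil-\bz_k$ satisfies Assumption~\ref{asp: 3} with constants $\eta B$ and $\eta\sigma$. This only improves $C$ (or is absorbed into the constants), so I will state it with $\sigma$. Second, $\mu$ and $L_\bsf$ are deterministic, since $\mu=1-\eta\rho$-type strong monotonicity comes from the resolvent structure of \cite{alacaoglu2024revisiting}. Therefore $C_1$ and $C_2$ do not depend on $\bz_k$.

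\textbf{Step 2 (bias).} The $M_k$ copies are i.i.d. given $\bz_k$, so $\E_k[\widetilde J_{\eta(G+\partial r)}(\bz_k)]=\E_k[\by^{\out}]$. Lemma~\ref{lem: mlmc1} then gives
\[
\|\E_k[\widetilde J]-J\|^2\le \frac{2C}{N_k}.
\]
Since $N_k\ge 2\max\{C_1,C_2\}/b_k^2$, this is at most $b_k^2(\|(\id-J)(\bz_k)\|^2+\sigma^2)$.

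\textbf{Step 3 (variance).} Use the bias--variance decomposition together with independence:
\[
\E_k\|\widetilde J-J\|^2=\|\E_k\widetilde J-J\|^2+\tfrac{1}{M_k}\E_k\|\by^{\out}-\E_k\by^{\out}\|^2\le \frac{2C}{N_k}+\frac{1}{M_k}\E_k\|\by^{\out}-\by^\star\|^2 .
\]
Lemma~\ref{lem: mlmc1} bounds the last expectation by $14C\log_2 N_k$. Because $N_k\ge 4\max\{C_1,C_2\}/v^2$, the first term is at most $\tfrac{v^2}{2}(\cdots)$. The choice of $M_k$ in \eqref{eq: sar4} gives $14C\log_2N_k/M_k\le \tfrac{v^2}{2}(\cdots)$. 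Adding the two and taking total expectation yields the second inequality.

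\textbf{Step 4 (oracle count).} Each call to \Cref{alg:fbf_mlmc} uses $O(\log_2 N_k)$ oracle calls in expectation, by Lemma~\ref{lem: mlmc1}. This holds because $\sum_{i:2^i\le N}2^{-i}\cdot O(2^i)=O(\log N)$. With $M_k$ independent calls, the expected cost is $O(M_k\log N_k)$.

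\textbf{Main obstacle.} The only delicate point is Step~1: verifying that the rescaled oracle for $\bsf$ satisfies the hypotheses of Lemma~\ref{lem: str_mon} with the anchor $\bx_0=\bz_k$. That choice is exactly what turns $\|\bx^\star-\bx_0\|^2$ into the residual-type quantity $\|(\id-J)(\bz_k)\|^2$, and the relaxed-variance anchor in Assumption~\ref{asp: 3} must be re-centered at $\bz_k$ via Fact~\ref{lem: bg_equiv}. Doing so adds a term proportional to $\|\bz_k-\bz_0\|^2$. I would try to absorb it either by restating Assumption~\ref{asp: 3} locally at $\bz_t$ (as the paper allows), or by folding it into $\sigma^2$ using the boundedness arguments of \cite{alacaoglu2024revisiting}.
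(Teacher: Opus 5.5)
Your proposal is correct and follows essentially the same route as the paper, which simply defers to \citep[Corollary C.10]{alacaoglu2024revisiting}: apply Lemma~\ref{lem: mlmc1} conditionally on $\bz_k$ with $C\le\max\{C_1,C_2\}(\|(\id-J_{\eta(G+\partial r)})(\bz_k)\|^2+\sigma^2)$, use that averaging $M_k$ conditionally i.i.d.\ copies leaves the bias unchanged and divides the variance by $M_k$, and read off $N_k$ and $M_k$. Two minor corrections: the re-centering you flag as the main obstacle needs no boundedness argument here, because the setting of Lemma~\ref{lem: str_mon} already anchors the variance bound at the inner starting point $\bz_k$, and if you derive it from Assumption~\ref{asp: 3} the extra $\bz_k$-measurable term $2\eta^2B^2\|\bz_k-\bz_0\|^2$ just plays the role of $\sigma^2$ conditionally on $\bz_k$ without affecting $N_k,M_k$ (which depend only on $C_1,C_2$); and the strong monotonicity modulus of $\id+\eta G$ is $1-\eta L$, which comes from the $L$-Lipschitzness of $G$, not $1-\eta\rho$ from the resolvent structure.
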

Finally, the complexity analysis will be the combination of these three results. Most of the derivation is the same as \cite{alacaoglu2024revisiting} up to the change of the inner solver, as a result, we only sketch the differences in the proof, compared to \citep[Theorem C.11]{alacaoglu2024revisiting}.

\begin{theorem}(Detailed restatement of \Cref{th: mlmc})\label{th: mlmc_supp}
Let Assumptions \ref{asp: 1} and \ref{asp: 3} hold and suppose that $\rho < 1/L$. Then, for \Cref{alg:weakmvi_stoc} with $\eta \leq \frac{1}{L}$ and $\alpha=1-\rho/\eta$, $\alpha_k = \frac{\alpha}{\sqrt{k+2}\log(k+3)}$; and $N_k, M_k$ given in \eqref{eq: sar4} with $b_k^2 = \frac{\alpha_k}{120\alpha(k+1)}, v^2=\frac{1}{60}$,
we can generate $\bz^\out$ such that
\begin{equation*}
\E [\res(\bz^{\out})] \leq \varepsilon \text{~with expected stochastic oracle complexity~} \widetilde{O}(\varepsilon^{-4}),
\end{equation*}
where $\bz_{\hat k}$ is selected uniformly at random after running the algorithm for $K$ iterations and $\bz^{\out}$ is generated by applying one step of \eqref{eq: fbf_stoc} for problem \eqref{eq: def_prox}, starting from $\bz_{\hat k}$.
\end{theorem}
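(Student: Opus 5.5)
The proof combines three earlier results: Lemma~\ref{eq: lem_resid} (the inexact KM descent inequality from \cite{alacaoglu2024revisiting}), Lemma~\ref{lem: mlmc2} (bias/variance control of the MLMC resolvent estimator, now resting on Lemma~\ref{lem: str_mon} under \Cref{asp: 3}), and a final one-step FBF post-processing that converts a small fixed-point residual into a small $\res$. The structure mirrors \citep[Theorem C.11]{alacaoglu2024revisiting}; only the constants $C_1, C_2$ in \eqref{eq: rate_bd} change. One point needs care: these constants depend on the subproblem's initial distance $\|\bx^\star-\bx_0\| = \|J_{\eta(G+\partial r)}(\bz_k)-\bz_k\|$ and on a variance floor. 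The variance floor is $\sigma^2 + B^2\eta^2\|\bz_k-\bz_0\|^2$ after recentering Assumption~\ref{asp: 3}, since $\bsf(\cdot)=(\id+\eta G)(\cdot)-\bz_k$ and the inner solver starts at $\bz_k$. This is why Lemma~\ref{lem: mlmc2} is phrased relative to $\|(\id-J)(\bz_k)\|^2+\sigma^2$. The $\|\bz_k-\bz_0\|^2$ dependence must be absorbed as in the next step.

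\textbf{Step 1: bound the iterates.} Plug the bias bound $b_k^2(\|(\id-J)(\bz_k)\|^2+\sigma_k^2)$ and variance bound $v^2(\cdots)$ from Lemma~\ref{lem: mlmc2} into Lemma~\ref{eq: lem_resid}, but in its unsummed one-step form. Use $\|\bz_k-\bz^\star\|\,b_k X \le \tfrac{\alpha}{2}\dots$ via Young's inequality. With $b_k^2=\frac{\alpha_k}{120\alpha(k+1)}$ and $v^2=1/60$, the multiples of $\|(\id-J)(\bz_k)\|^2$ are absorbed into the negative term $-\alpha_k\alpha\|(\id-J)\bz_k\|^2$ supplied by conic quasi-nonexpansiveness. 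The multiples of $\|\bz_k-\bz^\star\|^2$ carry summable coefficients ($\alpha_k^2$ and $\alpha_k b_k$, both $O(k^{-1}\log^{-2})$). A Robbins--Siegmund / \citep[Lemma 5.31]{bauschke2017convex}-type argument, as in \eqref{eq: df44}, then gives $\E\|\bz_k-\bz^\star\|^2\le R^2$ uniformly. This step is the main obstacle. The difficulty is making sure that the $B^2\|\bz_k-\bz_0\|^2$ contribution to the inner solver's effective $\sigma^2$ (and the $\kappa\|\bz_k-J(\bz_k)\|^2$ term) enters only with summable weights. I would handle this by folding it into the $\|\bz_k-\bz^\star\|^2$ coefficient via $\|\bz_k-\bz_0\|^2\le 2\|\bz_k-\bz^\star\|^2+2\|\bz_0-\bz^\star\|^2$.

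\textbf{Step 2: residual rate.} With $R$ in hand, summing the recursion gives
$\sum_{k<K}\alpha_k\E\|(\id-J)(\bz_k)\|^2 = O(1)$. Since $\alpha_k\gtrsim 1/(\sqrt K\log K)$, this yields $\frac1K\sum_k\E\|(\id-J)(\bz_k)\|^2=\widetilde O(K^{-1/2})$. Next, produce $\bz^{\out}$ by one stochastic FBF step \eqref{eq: fbf_stoc} on \eqref{eq: def_prox} from $\bz_{\hat k}$. I would use a mini-batch or MLMC-accurate estimate there so that, as at the end of the proof of Theorem~\ref{th: mb_supp}, $\E\,\res^2(\bz^\out)\lesssim \eta^{-2}\E\|(\id-J)\bz_{\hat k}\|^2+$ an estimation error controlled by $R$. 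This gives $\E\res(\bz^\out)\le\varepsilon$ once $K=\widetilde O(\varepsilon^{-4})$.

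\textbf{Step 3: cost.} By Lemma~\ref{lem: mlmc2}, iteration $k$ costs $O(M_k\log N_k)$ in expectation. Here $M_k=O(\log N_k)$ because $v$ is constant and $C_1,C_2$ are uniformly bounded by Step 1, and $N_k=\widetilde O(k^{3/2})$ enters only logarithmically. Hence the total expected cost is $\widetilde O(K)=\widetilde O(\varepsilon^{-4})$.
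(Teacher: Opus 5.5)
Your proposal follows the same route as the paper, which defers to \citep[Theorem C.11]{alacaoglu2024revisiting}. Both plug Lemma~\ref{lem: str_mon} into Lemmas~\ref{lem: mlmc1} and~\ref{lem: mlmc2}, feed the resulting bias/variance bounds into (the one-step form of) Lemma~\ref{eq: lem_resid}, and convert the fixed-point residual into a bound on $\E[\res(\bz^{\out})]$ via one mini-batched FBF step. Your Step~1 recenters \Cref{asp: 3} at the inner starting point $\bz_k$, absorbs the resulting floor $\eta^2B^2\|\bz_k-\bz_0\|^2$ into summable coefficients of $\|\bz_k-\bz^\star\|^2$, and closes with a Robbins--Siegmund bound; this makes explicit a point the paper glosses over by writing a constant $\sigma^2$ in Lemma~\ref{lem: mlmc2}. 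The only slip is in Step~3: $C_1,C_2$ in \eqref{eq: rate_bd} are absolute constants depending only on $\kappa,\mu$, so $N_k,M_k$ are deterministic and the cost count does not need Step~1.
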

\begin{proof}
The proof of this theorem mirrors that of \citep[Theorem C.11]{alacaoglu2024revisiting} which is in fact oblivious to the inner solver until the last paragraph of the proof, particularly the calculation of the stochastic oracle complexity of each iteration. In our case, $N_k, M_k$ depend on $C_1, C_2$ which are slightly different than \cite{alacaoglu2024revisiting}, due to us using Lemma~\ref{lem: str_mon} which requires only \Cref{asp: 3}, unlike the result of \cite{alacaoglu2024revisiting} that used the bounded variance assumption. However, this only changes the absolute constants for the number of expected calls to the stochastic oracle and the complexity result is hence the same. That is, we plug Lemma~\ref{lem: str_mon} into \Cref{lem: mlmc1} and \Cref{lem: mlmc2}, then use these bounds for the right-hand side of \Cref{eq: lem_resid} to get the complexity result for the average of $\E \| \id-J_{\eta(G+\partial r)}(\bz_k)\|^2$.

Finally converting a bound on $\E \| \id-J_{\eta(G+\partial r)}(\bz)\|^2$ to a guarantee on $\E[\res(\bz^\out)]$ is standard, see for example the textbook \citep[Lemma 6.3]{lan2020first}. A similar argument by using a mini-batch estimator gives the result.
\end{proof}

\newpage
\section{Proofs for Section \ref{sec: vr}}\label{app: proofs4}
Let us provide a further intuition for \Cref{alg: var_red_fbf}.
Note that by the definition of $\bz_{k+1/2}$, we have
\begin{align*}
\iff &0 \in \partial r(\bz_{k+1/2}) + \frac{1}{\gamma_k}\left( \bz_{k+1/2} - \bar\bz_k + \gamma_k \bg_k \right) \\
\iff &G(\bz_{k+1/2}) + \partial r(\bz_{k+1/2})\ni\frac{1}{\gamma_k}\left( \bar\bz_k - \bz_{k+1/2}-\gamma_k\bg_k + \gamma_k G(\bz_{k+1/2}) \right).
\end{align*}
Hence, the second step, that is, the update of $\bz_{k+1}$ is a subgradient descent-like step by using the unbiased estimate of a particular \emph{subgradient} (indeed for a min-max problem, $G$ contains the gradients of the coupling function) from $(G+\partial r)\bz_{k+1/2}$ with step size $\tau_k$ (which absorbs the $1/\gamma_k$ appearing above). In this update, we move from the anchored iterate $\bar\bz_k$ to follow the idea of Halpern anchoring.

We now analyze one iteration of this method.
\begin{lemma}\label{lem: vr_one_it}
Let Assumptions \ref{asp: 1}, \ref{asp: 2}, and \ref{asp: 3} hold. Then we have, for \Cref{alg: var_red_fbf}, that
\begin{align}
\E\| \bz^\star-\bz_{k+1}\|^2 &\leq (1-\beta_k) \E\| \bz^\star-\bz_k\|^2 + \beta_k \| \bz^\star-\bz_0\|^2 - \frac{\beta_k(1-\tau_k)}{1+\tau_k} \E\|\bz_0-\bz_{k+1}\|^2 \notag \\
&\quad + \mathcal{C}_{1,k} \E\|\bz_k-\bz_{k+1/2}\|^2 + \mathcal{C}_{2,k} \E\| \bz_k-\bz_{k+1}\|^2  \notag \\
&\quad + \mathcal{C}_{3,k} \E\| \bz_0-\bz_{k+1/2}\|^2 + \mathcal{C}_{4,k}\E\|\bg_k - G(\bz_k)\|^2 + \mathcal{E}_{k}.\label{eq: sfx4}
\end{align}
where we define,
\begin{equation}\label{eq: c_defs}
\begin{aligned}
    \mathcal{E}_k &=  \frac{72\tau_k^2\gamma_k^2\sigma^2}{1+\tau_k}, \\
    \mathcal{C}_{1,k} &= \frac{\tau_k(1/4+(9/2)\gamma_k^2L^2 + 72\tau_k\gamma_k^2 L^2 - 2(1-\beta_k))}{1+\tau_k}+\frac{5\rho\tau_k(1-\beta_k)}{2\gamma_k}+{11\rho\tau_kL^2\gamma_k},\\
    \mathcal{C}_{2,k} &= \frac{1}{24(1+\tau_k)} - \frac{(1-\tau_k)(1-\beta_k)}{1+\tau_k}, \\
\mathcal{C}_{3,k} &= \frac{5\rho\tau_k\beta_k}{2\gamma_k} + \frac{2\tau_k(36\tau_k\gamma_k^2 B^2 -\beta_k)}{1+\tau_k},\\
    \mathcal{C}_{4,k} &= 110\rho\tau_k\gamma_k +\frac{4\tau_k\gamma_k^2(9+18\tau_k)}{1+\tau_k}.
\end{aligned}
\end{equation}
\end{lemma}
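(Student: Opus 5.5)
The plan is to start from the inclusion noted just before the lemma. Writing $\bu_k := \gamma_k^{-1}(\bar\bz_k - \bz_{k+1/2}) - \bg_k + G(\bz_{k+1/2})$, we have $\bu_k\in(G+\partial r)\bz_{k+1/2}$. Then \eqref{eq: weakminty} gives $\langle \bu_k, \bz_{k+1/2}-\bz^\star\rangle \ge -\rho\|\bu_k\|^2$.

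The update for $\bz_{k+1}$ reads $\bz_{k+1} = \bar\bz_k - \tau_k\gamma_k \widetilde\bu_k$, where $\widetilde\bu_k := \bu_k + \widetilde G_{k+1/2} - G(\bz_{k+1/2})$ is conditionally unbiased for $\bu_k$ given the history up to $\bz_{k+1/2}$. I will expand
\[
\|\bz^\star-\bz_{k+1}\|^2 = \|\bz^\star-\bar\bz_k\|^2 + 2\tau_k\gamma_k\langle \widetilde\bu_k, \bz^\star-\bar\bz_k\rangle + \tau_k^2\gamma_k^2\|\widetilde\bu_k\|^2 .
\]
For the inner product I split $\bz^\star-\bar\bz_k = (\bz^\star-\bz_{k+1/2}) + (\bz_{k+1/2}-\bar\bz_k)$. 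Taking $\E_{k+1/2}$, the first part becomes $\langle \bu_k,\bz^\star-\bz_{k+1/2}\rangle\le \rho\|\bu_k\|^2$. The second part is rewritten through $\bz_{k+1/2}-\bar\bz_k = -\gamma_k(\bu_k+\bg_k-G(\bz_{k+1/2}))$. This produces the negative term $-\Theta(\tau_k)\|\bar\bz_k-\bz_{k+1/2}\|^2$, a cross term with $\bg_k - G(\bz_{k+1/2})$, and the weak-Minty penalty $\Theta(\rho\tau_k/\gamma_k)\|\bar\bz_k-\bz_{k+1/2}-\gamma_k(\bg_k-G(\bz_{k+1/2}))\|^2$.

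For the anchor I use the exact identity
\[
\|\bz^\star-\bar\bz_k\|^2 = (1-\beta_k)\|\bz^\star-\bz_k\|^2+\beta_k\|\bz^\star-\bz_0\|^2-\beta_k(1-\beta_k)\|\bz_0-\bz_k\|^2 ,
\]
and the analogous one for $\|\bar\bz_k-\bz_{k+1/2}\|^2$. These give the $\beta_k\|\bz_0-\bz_{k+1/2}\|^2$ and $(1-\beta_k)\|\bz_k-\bz_{k+1/2}\|^2$ pieces, which explains the $-2(1-\beta_k)$ in $\mathcal C_{1,k}$ and the $-\beta_k$ in $\mathcal C_{3,k}$.

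The $\|\bz_k-\bz_{k+1}\|^2$ and $\|\bz_0-\bz_{k+1}\|^2$ terms with the $(1+\tau_k)^{-1}$ normalization come from a standard manoeuvre. I will not expand around $\bar\bz_k$ naively. Instead I write $\bar\bz_k-\bz_{k+1}=\tau_k\gamma_k\widetilde\bu_k$ and use a three-point identity $2\langle \bar\bz_k-\bz_{k+1}, \bz_{k+1}-\bz^\star\rangle$ to keep $-\|\bar\bz_k-\bz_{k+1}\|^2$ with weight $(1-\tau_k)$. I then decompose $\bar\bz_k-\bz_{k+1}$ via the anchor into $(1-\beta_k)(\bz_k-\bz_{k+1})+\beta_k(\bz_0-\bz_{k+1})$, and finally divide by $(1+\tau_k)$. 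This mirrors the Halpern/FBF analysis in \citep{pethick2023solving, alacaoglu2025towards}.

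Next come the error terms. The stochastic part $\tau_k^2\gamma_k^2\E\|\widetilde G_{k+1/2}-G(\bz_{k+1/2})\|^2$ is bounded by \Cref{asp: 3} as $B^2\|\bz_{k+1/2}-\bz_0\|^2+\sigma^2$, which yields $\mathcal E_k$ and the $36\tau_k\gamma_k^2B^2$ inside $\mathcal C_{3,k}$. The deterministic remainder $\|\bg_k-G(\bz_{k+1/2})\|^2$ is split via Young as $\|\bg_k-G(\bz_k)\|^2 + L^2\|\bz_k-\bz_{k+1/2}\|^2$ using \Cref{asp: 1}, giving $\mathcal C_{4,k}$ and the $\gamma_k^2L^2$ parts of $\mathcal C_{1,k}$. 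The $\rho$-term is split with Young into $\|\bz_k-\bz_{k+1/2}\|^2$, $\beta_k^2\|\bz_0-\bz_k\|^2$ (relative to $\bz_{k+1/2}$), and $\gamma_k^2\|\bg_k-G(\bz_{k+1/2})\|^2$, producing the $5\rho\tau_k/(2\gamma_k)$, $11\rho\tau_k L^2\gamma_k$ and $110\rho\tau_k\gamma_k$ coefficients.

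The main obstacle is bookkeeping rather than ideas. Choosing the Young constants so that every cross term lands exactly in the stated $\mathcal C_{i,k}$ is delicate, particularly the interplay between the $(1+\tau_k)$ normalization and the anchor terms $\|\bz_0-\bz_k\|^2$ versus $\|\bz_0-\bz_{k+1/2}\|^2$, and I expect to iterate on the constants there. \Cref{asp: 2} is not really needed in this one-step bound; it enters only later through the STORM bound on $\|\bg_k-G(\bz_k)\|^2$.
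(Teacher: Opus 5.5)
Your ingredients are the right ones: the inclusion, the weak-Minty step after conditioning on $\bz_{k+1/2}$, the anchor identities, and Young plus Lipschitzness. Your remark that Assumption~\ref{asp: 2} is not needed here is also correct. But the step that produces the $\frac{1}{1+\tau_k}$ structure of \eqref{eq: sfx4} is missing, and the manoeuvre you describe does not produce it.

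The three-point identity $2\langle\bar\bz_k-\bz_{k+1},\bz_{k+1}-\bz^\star\rangle=\|\bar\bz_k-\bz^\star\|^2-\|\bar\bz_k-\bz_{k+1}\|^2-\|\bz_{k+1}-\bz^\star\|^2$ puts weight $1$, not $1-\tau_k$, on $-\|\bar\bz_k-\bz_{k+1}\|^2$. Since $\bz_{k+1}-\bz_{k+1/2}=(1-\tau_k)(\bar\bz_k-\bz_{k+1/2})+\tau_k\gamma_k(\bg_k-\widetilde{G}_{k+1/2})$, the leftover inner product $-2\tau_k\gamma_k\langle\widetilde{\bu}_k,\bz_{k+1}-\bz_{k+1/2}\rangle$ has $\|\bar\bz_k-\bz_{k+1/2}\|^2$-part only $-2\tau_k(1-\tau_k)\|\bar\bz_k-\bz_{k+1/2}\|^2$. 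No division by $1+\tau_k$ arises anywhere.

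Consequently, when you apply the three anchor identities, the $\beta_k(1-\beta_k)\|\bz_0-\bz_k\|^2$ corrections come with coefficients $-1$, $+1$ and $+2\tau_k(1-\tau_k)$. This leaves a positive $\|\bz_0-\bz_k\|^2$ term that has no place in \eqref{eq: sfx4}. If you discard part of $-\|\bar\bz_k-\bz_{k+1}\|^2$ to restore cancellation, you are left with $-2\tau_k(1-\tau_k)(1-\beta_k)$. That is weaker than the $-\frac{2\tau_k(1-\beta_k)}{1+\tau_k}$ built into $\mathcal{C}_{1,k}$.

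Your first, direct expansion has the opposite defect. It carries $+\tau_k^2\gamma_k^2\|\widetilde{\bu}_k\|^2=+\|\bar\bz_k-\bz_{k+1}\|^2$, so the negative $\|\bz_0-\bz_{k+1}\|^2$ term and the $\Theta(1)$-negative $\|\bz_k-\bz_{k+1}\|^2$ term never appear.

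The missing idea is a self-referential identity, in three steps:
\begin{enumerate}
\item Expand $2\langle\bz_{k+1}-\bar\bz_k,\bz^\star-\bz_{k+1/2}\rangle=\|\bz_{k+1}-\bz_{k+1/2}\|^2-\|\bz^\star-\bz_{k+1}\|^2-\|\bz_{k+1/2}-\bar\bz_k\|^2+\|\bz^\star-\bar\bz_k\|^2$.
\item Write $\|\bz_{k+1}-\bz_{k+1/2}\|^2=\langle(1-\tau_k)(\bar\bz_k-\bz_{k+1/2})+\tau_k\gamma_k(\bg_k-\widetilde{G}_{k+1/2}),\bz_{k+1}-\bz_{k+1/2}\rangle$ and polarize the first inner product.
\item Solve for $\|\bz_{k+1}-\bz_{k+1/2}\|^2$, which now appears on both sides with weights $1$ and $\frac{1-\tau_k}{2}$:
\[
\|\bz_{k+1}-\bz_{k+1/2}\|^2=\tfrac{1-\tau_k}{1+\tau_k}\big(\|\bar\bz_k-\bz_{k+1/2}\|^2-\|\bz_{k+1}-\bar\bz_k\|^2\big)+\tfrac{2\tau_k\gamma_k}{1+\tau_k}\langle\bg_k-\widetilde{G}_{k+1/2},\bz_{k+1}-\bz_{k+1/2}\rangle .
\]
\end{enumerate}
This gives weight $-\frac{2\tau_k}{1+\tau_k}$ on $\|\bar\bz_k-\bz_{k+1/2}\|^2$ and $-\frac{1-\tau_k}{1+\tau_k}$ on $\|\bz_{k+1}-\bar\bz_k\|^2$. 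Together with the $+1$ on $\|\bz^\star-\bar\bz_k\|^2$ these sum to zero, so the $\|\bz_0-\bz_k\|^2$ corrections cancel exactly.

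Your error accounting also needs correcting. The cross term above cannot be conditioned away, because $\bz_{k+1}$ depends on $\xi_{k+1/2}$. The paper splits $\bz_{k+1}-\bz_{k+1/2}=(\bz_k-\bz_{k+1/2})+(\bz_{k+1}-\bz_k)$. It takes $\E_{k+1/2}$ only in the first piece (Young with $c_1=4$) and applies Young with $c_2=24$ to the second. That is the source of the $\frac{1}{24(1+\tau_k)}$ in $\mathcal{C}_{2,k}$ and of the factor $72=3\cdot 24$ in $\mathcal{E}_k$, $\mathcal{C}_{1,k}$, $\mathcal{C}_{3,k}$ and $\mathcal{C}_{4,k}$. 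The variance part of $\tau_k^2\gamma_k^2\|\widetilde{\bu}_k\|^2$ that you point to would carry coefficient $\tau_k^2\gamma_k^2$, and it does not occur in this organization at all.
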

\begin{remark}
The bound on the right-hand side of \eqref{eq: sfx4} is rather complicated. The main intuition is that for each of the error terms independent of $\bz^\star$, we have negative coefficients that we can use to cancel the positive coefficients (after picking the parameters accordingly), except the last terms on the right-hand side of \eqref{eq: sfx4}. For the second term of the last line, we will use the classical bound of STORM variance reduced estimator of \cite{cutkosky2019momentum}. The last term in the last line will be sufficiently small due to the choice of $\tau_k^2$.
\end{remark}
\begin{proof}[Proof of Lemma~\ref{lem: vr_one_it}]
The definitions of $\bz_{k+1/2}$ and the proximal operator give
\begin{align*}
&\bz_{k+1/2} + \gamma_k \partial r(\bz_{k+1/2}) \ni \bar \bz_k - \gamma_k \bg_k \\
\iff & G(\bz_{k+1/2}) + \partial r(\bz_{k+1/2}) \ni \frac{1}{\gamma_k}\left( \bar\bz_k - \bz_{k+1/2} \right) -\bg_k + G(\bz_{k+1/2}).
\end{align*}
Then, using \eqref{eq: weakminty} gives
\begin{align}
&\left\langle \frac{1}{\gamma_k}(\bar\bz_k - \bz_{k+1/2}) -  \bg_k + G(\bz_{k+1/2}), \bz_{k+1/2} - \bz^\star \right\rangle \notag \\
& \geq -\rho \left\|\frac{1}{\gamma_k}(\bar\bz_k - \bz_{k+1/2}) -  \bg_k + G(\bz_{k+1/2}) \right\|^2.
\end{align}
After multiplying both sides by $\tau_k\gamma_k$, we get
\begin{align}
&\tau_k\langle \bar\bz_k - \bz_{k+1/2} - \gamma_k \bg_k, \bz_{k+1/2} - \bz^\star \rangle + \tau_k\gamma_k \langle G(\bz_{k+1/2}), \bz_{k+1/2} - \bz^\star \rangle \notag \\
&\quad \geq -\frac{\rho \tau_k}{\gamma_k} \| \bar\bz_k - \bz_{k+1/2} - \gamma_k \bg_k + \gamma_k G(\bz_{k+1/2}) \|^2.\label{eq: ake3}
\end{align}
We estimate the first inner product.
The definition of $\bz_{k+1}$, after rearranging and dividing each side by $\tau_k$ yields
\begin{align*}
\bz_{k+1/2} - \bar\bz_k = \frac{1}{\tau_k}\left( \bz_{k+1} - \bar\bz_k \right)- \gamma_k \bg_k +  \gamma_k \gtil(\bz_{k+1/2}, \xi_{k+1/2}).
\end{align*}
As a result, we have for the first inner product in \eqref{eq: ake3} that
\begin{align*}
\tau_k\langle  \bz_{k+1/2} - \bar\bz_k + \gamma_k \bg_k, \bz^\star-\bz_{k+1/2}  \rangle=\langle \bz_{k+1} - \bar\bz_k + \tau_k\gamma_k \widetilde{G}(\bz_{k+1/2}, \xi_{k+1/2}), \bz^\star-\bz_{k+1/2} \rangle.
\end{align*}
This gives in \eqref{eq: ake3} that
\begin{align*}
&\langle \bz_{k+1} - \bar\bz_k,  \bz^\star - \bz_{k+1/2} \rangle + \tau_k\gamma_k \langle G(\bz_{k+1/2}) - \gtil(\bz_{k+1/2}, \xi_{k+1/2}), \bz_{k+1/2} - \bz^\star \rangle \notag \\
& \geq -\frac{\rho \tau_k}{\gamma_k} \| \bar\bz_k - \bz_{k+1/2} - \gamma_k \bg_k + \gamma_k G(\bz_{k+1/2}) \|^2.
\end{align*}
After taking expectation, using the tower property, and the assumption of $\gtil(\bz_{k+1/2}, \xi_{k+1/2})$ being unbiased, we obtain
\begin{align}\label{eq: nbw3}
\E\langle \bz_{k+1} - \bar\bz_k,  \bz^\star - \bz_{k+1/2} \rangle \geq -\frac{\rho \tau_k}{\gamma_k} \E\| \bar\bz_k - \bz_{k+1/2} - \gamma_k \bg_k + \gamma_k G(\bz_{k+1/2}) \|^2.
\end{align}
We rewrite the inner product using squared norms to derive
\begin{align}
2\langle \bz_{k+1} - \bar\bz_k, &\bz^\star-\bz_{k+1/2} \rangle = 2\langle \bz_{k+1} - \bz_{k+1/2}, \bz^\star-\bz_{k+1/2} \rangle + 2\langle \bz_{k+1/2} - \bar\bz_k, \bz^\star-\bz_{k+1/2} \rangle \notag \\
&= \| \bz_{k+1} - \bz_{k+1/2}\|^2 - \| \bz^\star-\bz_{k+1}\|^2 - \| \bz_{k+1/2} - \bar\bz_k\|^2 + \|\bz^\star-\bar\bz_k\|^2.\label{eq: jqp4}
\end{align}
We continue to estimate the main error term $\|\bz_{k+1} - \bz_{k+1/2}\|^2$ as
\begin{align}
&\| \bz_{k+1} - \bz_{k+1/2}\|^2 \notag \\
&= \langle \bz_{k+1} - \bz_{k+1/2}, \bz_{k+1} - \bz_{k+1/2} \rangle \notag \\
&= (1-\tau_k)\langle \bar\bz_k - \bz_{k+1/2}, \bz_{k+1} - \bz_{k+1/2} \rangle + \tau_k\gamma_k\langle \bg_k - \gtil(\bz_{k+1/2}, \xi_{k+1/2}), \bz_{k+1} - \bz_{k+1/2} \rangle \notag \\
&= \frac{1-\tau_k}{2} \left( \| \bar\bz_k - \bz_{k+1/2} \|^2 + \| \bz_{k+1} - \bz_{k+1/2}\|^2 - \| \bz_{k+1} - \bar\bz_k\|^2 \right) \notag \\
&\quad+ \tau_k\gamma_k\langle \bg_k - \gtil(\bz_{k+1/2}, \xi_{k+1/2}), \bz_{k+1} - \bz_{k+1/2} \rangle,\label{eq: cst2}
\end{align}
where the second equality used the definition of $\bz_{k+1}$ as
\begin{align*}
\bz_{k+1} - \bz_{k+1/2} &= \bar\bz_k - \bz_{k+1/2} - \tau_k(\bar\bz_k - \bz_{k+1/2} - \gamma_k \bg_k + \gamma_k \gtil(\bz_{k+1/2}, \xi_{k+1/2}))\\
&=(1-\tau_k)(\bar\bz_k - \bz_{k+1/2}) + \tau_k\gamma_k(  \bg_k - \gtil(\bz_{k+1/2}, \xi_{k+1/2})).
\end{align*}
The last equality in \eqref{eq: cst2} used the elementary identity $\langle \ba, \bb \rangle = \frac{1}{2} \left( \| \ba\|^2 + \|\bb\|^2 - \| \ba-\bb\|^2 \right)$.

Rearranging \eqref{eq: cst2} leads to
\begin{align*}
&\frac{1+\tau_k}{2} \| \bz_{k+1} - \bz_{k+1/2}\|^2 \\
&= \frac{1-\tau_k}{2} \left(\| \bar\bz_k - \bz_{k+1/2} \|^2 - \| \bz_{k+1} - \bar\bz_k\|^2 \right)+ \tau_k\gamma_k\langle \bg_k - \gtil(\bz_{k+1/2}, \xi_{k+1/2}), \bz_{k+1} - \bz_{k+1/2} \rangle.
\end{align*}
Multiplying both sides of $\frac{2}{1+\tau_k}$ gives us
\begin{align*}
\| \bz_{k+1} - \bz_{k+1/2}\|^2 &= \frac{1-\tau_k}{1+\tau_k} \left( \| \bar\bz_k - \bz_{k+1/2} \|^2 - \| \bz_{k+1} - \bar\bz_k\|^2 \right) \\
&\quad+ \frac{2\tau_k\gamma_k}{1+\tau_k} \langle \bg_k - \gtil(\bz_{k+1/2}, \xi_{k+1/2}), \bz_{k+1} - \bz_{k+1/2} \rangle.
\end{align*}
With this, we estimate \eqref{eq: jqp4} as
\begin{align}
&2\langle \bz_{k+1} - \bar\bz_k, \bz^\star-\bz_{k+1/2} \rangle \notag \\
&= \frac{-2\tau_k}{1+\tau_k} \| \bar\bz_k - \bz_{k+1/2}\|^2 - \frac{1-\tau_k}{1+\tau_k} \| \bz_{k+1} - \bar\bz_k\|^2 - \| \bz^\star-\bz_{k+1}\|^2+ \|\bz^\star-\bar\bz_k\|^2 \notag \\
&\quad+ \frac{2\tau_k\gamma_k}{1+\tau_k} \langle \bg_k - \gtil(\bz_{k+1/2}, \xi_{k+1/2}), \bz_{k+1} - \bz_{k+1/2} \rangle.\label{eq: ewr3}
\end{align}
Next, we use the definition of $\bar\bz_k$ for estimating the first, second and fourth terms on the right-hand side of \eqref{eq: ewr3}, to derive
\begin{align}
2\langle \bz_{k+1} - \bar\bz_k, \bz^\star-\bz_{k+1/2} \rangle
&= \beta_k \| \bz^\star-\bz_0\|^2 + (1-\beta_k) \| \bz^\star-\bz_k\|^2 - \| \bz^\star-\bz_{k+1}\|^2\notag \\
&\quad- \frac{2\tau_k}{1+\tau_k} \left(\beta_k\|\bz_0-\bz_{k+1/2}\|^2 + (1-\beta_k)\|\bz_k-\bz_{k+1/2}\|^2 \right)\notag  \\
&\quad - \frac{\beta_k(1-\tau_k)}{1+\tau_k} \| \bz_0 - \bz_{k+1}\|^2 - \frac{(1-\beta_k)(1-\tau_k)}{1+\tau_k} \|\bz_k-\bz_{k+1}\|^2\notag \\
&\quad+ \frac{2\tau_k\gamma_k}{1+\tau_k} \langle \bg_k - \gtil(\bz_{k+1/2}, \xi_{k+1/2}), \bz_{k+1} - \bz_{k+1/2} \rangle,\label{eq: hkd3}
\end{align}
by using the identity $\|\beta \ba + (1-\beta) \bb \|^2 = \beta \| \ba\|^2 + (1-\beta) \| \bb\|^2 - \beta(1-\beta)\|\ba-\bb\|^2$ three times for all the terms involving $\bar\bz_k$ in \eqref{eq: ewr3}.

Same as the derivation of \citep[Eq. (4.27)]{alacaoglu2025towards}, one can estimate by Young's inequality that
\begin{align}
&\mathbb{E}\left[\frac{2\tau_k\gamma_k}{1+\tau_k} \langle \bg_k - \gtil(\bz_{k+1/2}, \xi_{k+1/2}), \bz_{k+1} - \bz_{k+1/2} \rangle\right] \notag \\
&\leq \frac{\tau_k}{(1+\tau_k)c_1} \mathbb{E}\| \bz_k - \bz_{k+1/2}\|^2 + \frac{\tau_k\gamma_k^2 c_1}{1+\tau_k} \E\| \bg_k - G(\bz_{k+1/2})\|^2 \notag \\
&\quad + \frac{1}{(1+\tau_k)c_2} \mathbb{E}\|\bz_{k+1} - \bz_k\|^2 + \frac{\tau_k^2\gamma_k^2c_2}{(1+\tau_k)} \mathbb{E}\| \bg_k - \gtil(\bz_{k+1/2},\xi_{k+1/2})\|^2,\label{eq: qeq3}
\end{align}
where we used
\begin{align*}
&\mathbb{E}_{k+1/2} \langle \bg_k - \gtil(\bz_{k+1/2}, \xi_{k+1/2}), \bz_{k+1} - \bz_{k+1/2} \rangle \\
&= \mathbb{E}_{k+1/2} [\langle \bg_k - \gtil(\bz_{k+1/2}, \xi_{k+1/2}), \bz_{k} - \bz_{k+1/2} \rangle +  \langle \bg_k - \gtil(\bz_{k+1/2}, \xi_{k+1/2}), \bz_{k+1} - \bz_{k} \rangle] \\
&= \langle \bg_k - G(\bz_{k+1/2}), \bz_{k} - \bz_{k+1/2} \rangle + \mathbb{E}_{k+1/2} \langle \bg_k - \gtil(\bz_{k+1/2}, \xi_{k+1/2}), \bz_{k+1} - \bz_{k} \rangle].
\end{align*}
On \eqref{eq: qeq3}, after Young's inequality and Lipschitzness of $G$, we estimate as
\begin{align}
&\mathbb{E}\left[\frac{2\tau_k\gamma_k}{1+\tau_k} \langle \bg_k - \gtil(\bz_{k+1/2}, \xi_{k+1/2}), \bz_{k+1} - \bz_{k+1/2} \rangle\right] \notag \\
&\leq \frac{\tau_k}{(1+\tau_k)c_1} \mathbb{E}\| \bz_k - \bz_{k+1/2}\|^2 + \frac{A_1L^2\tau_k\gamma_k^2 c_1}{1+\tau_k} \mathbb{E}\| \bz_k - \bz_{k+1/2}\|^2 + \frac{A_2\tau_k\gamma_k^2 c_1}{1+\tau_k} \mathbb{E}\| \bg_k - G(\bz_k)\|^2 \notag \\
&\quad + \frac{1}{(1+\tau_k)c_2} \mathbb{E}\|\bz_{k+1} - \bz_k\|^2 + \frac{3\tau_k^2\gamma_k^2c_2}{(1+\tau_k)} \mathbb{E}\| \bg_k - G(\bz_k)\|^2 \notag \\
&\quad+ \frac{3L^2\tau_k^2\gamma_k^2c_2}{(1+\tau_k)} \mathbb{E}\| \bz_k - \bz_{k+1/2}\|^2 + \frac{3\tau_k^2\gamma_k^2c_2}{(1+\tau_k)} \mathbb{E}\| G(\bz_{k+1/2}) - \gtil(\bz_{k+1/2},\xi_{k+1/2})\|^2.\label{eq: hkd4}
\end{align}
where $A_1 = (1+a_1),\, A_2 = (1+\frac{1}{a_1})$ for any positive $a_1$.

Applying \Cref{asp: 3} results in the estimate
\begin{align}
\mathbb{E}\| G(\bz_{k+1/2}) - \gtil(\bz_{k+1/2},\xi_{k+1/2})\|^2 &\leq B^2\mathbb{E}\| \bz_{k+1/2} - \bz_0 \|^2+ \sigma^2.
\label{eq: sds3}
\end{align}
We also have by Young's inequalities that
\begin{align}
&\frac{\rho \tau_k}{\gamma_k} \| \bar\bz_k - \bz_{k+1/2} - \gamma_k \bg_k + \gamma_k G(\bz_{k+1/2}) \|^2 \notag \\
&\leq  \left(\frac{E_1\rho \tau_k(1-\beta_k)}{\gamma_k} + E_3L^2\rho \tau_k\gamma_k \right) \| \bz_k-\bz_{k+1/2} \|^2 \notag \\
&\quad+ \frac{E_1\rho \tau_k\beta_k}{\gamma_k} \| \bz_0 - \bz_{k+1/2} \|^2 + E_2\rho \tau_k\gamma_k \| \bg_k - G(\bz_{k}) \|^2,\label{eq: jwz3}
\end{align}
where we use the property
\begin{align*}
&\|\bar\bz_k-\bz_{k+1/2}\|^2 \\
&= \|\beta_k \bz_0 + (1-\beta_k) \bz_k - \bz_{k+1/2}\|^2 \leq \beta_k\|\bz_0-\bz_{k+1/2}\|^2 
+ (1-\beta_k)\|\bz_k-\bz_{k+1/2}\|^2.
\end{align*}
On \eqref{eq: jwz3}, we have $E_1 = (1+e_1),\, E_2 = (1+\frac{1}{e_1})(1+e_2),$ and $E_3 = (1+\frac{1}{e_1})(1+\frac{1}{e_2})$ for any positive $e_1, e_2$.

Finally, we set
\begin{align*}
    c_1 = 4, ~~~ c_2 = 24, ~~~ a_1 = \frac{1}{8}, ~~~ e_1 = \frac{1}{4}, ~~~ e_2 = 10.
\end{align*}
With these choices, we have
\begin{align*}
    A_1 = \frac{9}{8}, ~~~ A_2 = 9, ~~~ E_1 = \frac{5}{4},~~~ E_2 = 55, ~~~ E_3 = \frac{11}{2}.
\end{align*}
Next, we multiply both sides of \eqref{eq: nbw3} by $2$, then plug in the expectation of \eqref{eq: hkd3}, \eqref{eq: hkd4}, \eqref{eq: sds3}, and \eqref{eq: jwz3} to get the result.
\end{proof}
We now continue with the recursion of the STORM estimator of \cite{cutkosky2019momentum} that is used to update $\bg_k$. This lemma is taken from \citep[Lemma 6.1]{alacaoglu2025towards} (which we refer to for the proof of this precise statement) that followed the idea of \cite{cutkosky2019momentum} with the minor change of using the \Cref{asp: 3} rather than the uniformly bounded variance.
\begin{lemma}\label{lem: vr_storm_lem}(See \cite{cutkosky2019momentum} for the original idea and \citep[Lemma 6.1]{alacaoglu2025towards} for the statement used here)
Let $\bg_k$ be as defined in \Cref{alg: var_red_fbf}. Then, under Assumptions \ref{asp: 1} and \ref{asp: 3}, we have
\begin{align*}
\frac{\alpha_k}{2} \E \| \bg_k - G(\bz_k)\|^2 &\leq \left( 1- \frac{\alpha_k}{2} \right) \E \| \bg_k - G(\bz_k)\|^2 - \E \| \bg_{k+1} - G(\bz_{k+1})\|^2\\
&\quad+2L^2\E \| \bz_{k+1} - \bz_k\|^2 + 2\alpha_k^2 (B^2\E\|\bz_k-\bz_0\|^2 + \sigma^2).
\end{align*}
\end{lemma}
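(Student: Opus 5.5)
The plan follows the standard STORM argument of \cite{cutkosky2019momentum}. The only change from the bounded-variance case is in the single-sample variance term, where \Cref{asp: 3} is used instead of \eqref{eq: bdd_vr}. Write $\epsilon_k := \bg_k - G(\bz_k)$. From the update of $\bg_{k+1}$ in \Cref{alg: var_red_fbf},
\begin{align*}
\epsilon_{k+1} = (1-\alpha_k)\epsilon_k + \underbrace{\big(\gtil(\bz_{k+1},\xi_{k+1}) - \gtil(\bz_k,\xi_{k+1}) - G(\bz_{k+1}) + G(\bz_k)\big)}_{=:U_k} + \alpha_k\underbrace{\big(\gtil(\bz_k,\xi_{k+1}) - G(\bz_k)\big)}_{=:V_k}.
\end{align*}
The first step is to condition on everything up to and including $\bz_{k+1}$; this fixes $\epsilon_k$, $\bz_k$ and $\bz_{k+1}$, while $\xi_{k+1}$ is a fresh sample. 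Unbiasedness gives $\E[U_k]=\E[V_k]=0$ under this conditioning, so the cross terms with $(1-\alpha_k)\epsilon_k$ vanish. This leaves
\begin{align*}
\E\|\epsilon_{k+1}\|^2 = (1-\alpha_k)^2\E\|\epsilon_k\|^2 + \E\|U_k+\alpha_k V_k\|^2 \le (1-\alpha_k)\E\|\epsilon_k\|^2 + 2\E\|U_k\|^2 + 2\alpha_k^2\E\|V_k\|^2,
\end{align*}
using $(1-\alpha_k)^2\le 1-\alpha_k$ for $\alpha_k\in[0,1]$.

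Next I bound the two noise terms. Since the variance of a random vector is at most its second moment, $\E\|U_k\|^2 \le \E\|\gtil(\bz_{k+1},\xi_{k+1})-\gtil(\bz_k,\xi_{k+1})\|^2$. The expected-Lipschitz property in \Cref{asp: 2}, where the same seed is used for both points, then bounds this by $L^2\E\|\bz_{k+1}-\bz_k\|^2$. (Plain Lipschitzness from \Cref{asp: 1} would not suffice here, so the lemma really relies on the standing \Cref{asp: 2} of this section, with $L$ denoting $L_{\mathrm{exp}}$.) For $V_k$, \Cref{asp: 3} applied at $\bz_k$ gives $\E\|V_k\|^2 \le B^2\E\|\bz_k-\bz_0\|^2+\sigma^2$. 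Rearranging $\alpha_k\E\|\epsilon_k\|^2 = (1-\alpha_k/2)\E\|\epsilon_k\|^2 - (1-\alpha_k)\E\|\epsilon_k\|^2 + \tfrac{\alpha_k}{2}\E\|\epsilon_k\|^2$ and discarding the surplus on the favorable side then yields the stated inequality with constants $2L^2$ and $2\alpha_k^2$.

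There is no real obstacle beyond bookkeeping; the delicate point is the measurability argument. One must check that $\bz_{k+1}$ is determined before $\xi_{k+1}$ is drawn, which holds because $\bz_{k+1}$ uses only $\xi_{k+1/2}$. With that, conditioning on the history through $\bz_{k+1}$ makes $\epsilon_k$ deterministic and both noise terms mean-zero, as required.
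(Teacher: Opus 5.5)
Your proof is correct and is the standard STORM argument of \cite{cutkosky2019momentum} that the paper defers to via \citep[Lemma 6.1]{alacaoglu2025towards}: the same decomposition of $\bg_{k+1}-G(\bz_{k+1})$, conditioning on the history through $\bz_{k+1}$ to kill the cross terms, Young's inequality, the same-seed expected Lipschitz bound for $U_k$, and \Cref{asp: 3} for $V_k$. Both caveats you raise are warranted: the $2L^2\E\|\bz_{k+1}-\bz_k\|^2$ term genuinely needs \Cref{asp: 2} (which \Cref{th: vr_supp} does assume, even though the lemma lists only Assumptions \ref{asp: 1} and \ref{asp: 3}), and the step $(1-\alpha_k)^2\le 1-\alpha_k$ needs $\alpha_k\le 1$, which the later choice $\alpha_k=2/\sqrt{k+3}$ violates at $k=0$ since $\alpha_0=2/\sqrt{3}>1$.
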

We now include the restatement of \Cref{th: vr} that includes the parameter details and then provide its proof. 
\begin{theorem}\label{th: vr_supp}(Detailed restatement of \Cref{th: vr})\label{th: vr_supp}
Let Assumptions \ref{asp: 1}, \ref{asp: 2}, \ref{asp: 3} hold and suppose that $\rho > 0$ is sufficiently small with
\begin{align}\label{eq: sofx4}
    \rho \leq \min \Bigg\{ 
    \frac{L}{55} \left( \frac{83}{24L^2} - \frac{9{\bar\tau}}{\sqrt{3}L^2} \right),~~
    \frac{1}{12L}, ~~
    \frac{1}{16L(1+{\bar\tau})} - \frac{2\bar\tau}{17\sqrt{3}L}\left( \frac{9B^2}{L^2}+3\right)
    \Bigg\}=: f(\bar\tau).
\end{align}
Then, for the output of \Cref{alg: var_red_fbf} with 
\begin{equation*}
\beta_k = \frac{1}{k+3}, ~~~ \alpha_k = \frac{2}{\sqrt{k+3}}, ~~~ \gamma_k = \frac{1}{4L}, ~~~ \tau_k = \frac{\bar\tau}{\sqrt{k+3}},
\end{equation*}
where $\bar\tau \leq \min\left\{\frac{L^2}{219B^2},\frac{L^2}{20B^2+7L^2}\right\}$.
Then, we have
\begin{equation*}
\E [\res(\bz^{\out})] \leq \varepsilon \text{~with stochastic oracle complexity~} \widetilde{O}(\varepsilon^{-4}),
\end{equation*}
where $\Pr(\hat k=k) = \frac{\tau_k(k+3)}{\sum_{i=0}^{K-1} \tau_i(i+3)}$ and $\bz^{\out} = \bz_{\hat k+1/2}$.
\end{theorem}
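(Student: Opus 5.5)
The argument runs through a Lyapunov function that combines the one-iteration bound of Lemma~\ref{lem: vr_one_it} with the STORM recursion of Lemma~\ref{lem: vr_storm_lem}. Set
\[
\Phi_k := \E\|\bz^\star-\bz_k\|^2 + c\,\E\|\bg_k-G(\bz_k)\|^2 .
\]
I add \eqref{eq: sfx4} to $c$ times Lemma~\ref{lem: vr_storm_lem}, with $c$ chosen so that $c\alpha_k/2$ dominates $\mathcal{C}_{4,k}$. Since $\mathcal{C}_{4,k}=O(\tau_k\gamma_k)=O(\bar\tau/(L\sqrt{k+3}))$ and $\alpha_k=2/\sqrt{k+3}$, a constant $c=\Theta(\bar\tau/L)$ suffices; this is why $\tau_k$ and $\alpha_k$ share the same $1/\sqrt{k}$ order.

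With this $c$, the STORM term $2cL^2\E\|\bz_{k+1}-\bz_k\|^2=O(\bar\tau L)\E\|\bz_{k+1}-\bz_k\|^2$ must be absorbed by $\mathcal{C}_{2,k}\approx \frac1{24}-(1-\tau_k)(1-\beta_k)<0$, which is fine for small $\bar\tau$. Next I check the remaining signs with $\gamma_k=1/(4L)$.

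\begin{itemize}[leftmargin=*]
\item For $\mathcal{C}_{1,k}$: $\frac{5\rho\tau_k}{2\gamma_k}=10\rho L\tau_k$ and $11\rho\tau_kL^2\gamma_k$ must be dominated by $\frac{\tau_k}{1+\tau_k}(2(1-\beta_k)-\frac14-\frac{9}{32}-O(\tau_k))$. Making this negative, say at most $-\Theta(\tau_k)$, is exactly where the conditions in \eqref{eq: sofx4} come from.
\item For $\mathcal{C}_{3,k}$: the requirement $\frac{5\rho\tau_k\beta_k}{2\gamma_k}+\frac{72\tau_k^2\gamma_k^2B^2}{1+\tau_k}\le \frac{2\tau_k\beta_k}{1+\tau_k}$ holds since $\tau_k\le\bar\tau\sqrt{k+3}\,\beta_k$ with $\bar\tau\lesssim L^2/B^2$, and $\rho L$ is small.
\item The STORM term $2c\alpha_k^2B^2\E\|\bz_k-\bz_0\|^2$ is also $O(\bar\tau\beta_k B^2/L)$ times a distance. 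I split it as $\|\bz_k-\bz_0\|^2\le 2\|\bz_k-\bz^\star\|^2+2\|\bz^\star-\bz_0\|^2$. It is absorbed partly by the Halpern term $-\beta_k(1-\tau_k)/(1+\tau_k)\,\E\|\bz_0-\bz_{k+1}\|^2$, after relating $\bz_k$ to $\bz_{k+1}$ via $\|\bz_k-\bz_{k+1}\|^2$, and partly charged as an $O(\beta_k/\sqrt{k})$ multiplicative perturbation.
\end{itemize}

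After these steps I expect a recursion of the form
\[
\Phi_{k+1}\le (1-\beta_k+\epsilon_k)\Phi_k+\beta_k\|\bz^\star-\bz_0\|^2-\Theta(\tau_k)\E\|\bz_k-\bz_{k+1/2}\|^2+O\bigl(\alpha_k^2\sigma^2+\tau_k^2\sigma^2/L^2\bigr),
\]
with $\sum\epsilon_k<\infty$.

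The first consequence is a uniform bound on the iterates. Dropping the negative term, the recursion keeps $\Phi_k$ bounded. Using $\beta_k=1/(k+3)$ and multiplying by $(k+3)$, which is the Halpern telescoping trick, gives $(k+3)\Phi_{k+1}\le(k+2)\Phi_k+\|\bz^\star-\bz_0\|^2+O((k+3)/(k+3))\sigma^2$. Hence $\Phi_k\le C\,(\|\bz^\star-\bz_0\|^2+\sigma^2/L^2)$ up to logarithmic factors. This is the step that replaces bounded variance: the anchoring with weight $\beta_k$ pulls the iterates back toward $\bz_0$ and controls the $B^2$ growth.

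The second consequence is a weighted bound on the step lengths. I multiply the recursion by $(k+3)$ without dropping the negative term and telescope, which yields
\[
\sum_{k<K}\tau_k(k+3)\E\|\bz_k-\bz_{k+1/2}\|^2=O(K\log K),
\]
since the noise terms contribute $\sum(k+3)\alpha_k^2=O(K)$. Meanwhile $\sum_{k<K}\tau_k(k+3)=\Theta(\bar\tau K^{3/2})$, so under the stated sampling of $\hat k$ this gives $\E\|\bz_{\hat k}-\bz_{\hat k+1/2}\|^2=\widetilde O(K^{-1/2})$.

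Finally I bound the residual. Exactly as in the proof of Theorem~\ref{th: mb_supp}, the inclusion $\gamma_k^{-1}(\bar\bz_k-\bz_{k+1/2})-\bg_k+G(\bz_{k+1/2})\in(G+\partial r)\bz_{k+1/2}$ gives
\[
\res^2(\bz_{k+1/2})\lesssim (\gamma_k^{-2}+L^2)\|\bz_k-\bz_{k+1/2}\|^2+\gamma_k^{-2}\beta_k^2\|\bz_0-\bz_k\|^2+\|\bg_k-G(\bz_k)\|^2 .
\]
The middle term is $O(1/k^2)$ by the uniform bound. For the last term I take the same weighted average of the STORM recursion, giving $\E\|\bg_k-G(\bz_k)\|^2=\widetilde O(\alpha_k)$ on weighted average, i.e. $\widetilde O(K^{-1/2})$. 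Therefore $\E[\res^2(\bz^\out)]=\widetilde O(K^{-1/2})$, so $K=\widetilde O(\varepsilon^{-4})$ iterations suffice. Since each iteration uses $3$ oracle calls, Jensen's inequality gives the claim. The main obstacle I anticipate is the bookkeeping in the sign conditions: keeping every coefficient negative simultaneously while the $B^2$ terms are absorbed by the anchoring.
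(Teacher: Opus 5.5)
Your architecture matches the paper's: Lemma~\ref{lem: vr_one_it} plus a multiple of Lemma~\ref{lem: vr_storm_lem}, sign conditions at $\gamma_k=\frac{1}{4L}$, multiplication by $k+3$, telescoping, then conversion to the residual. However, your $\mathcal{C}_{3,k}$ bullet is false as stated.

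The positive part of $\mathcal{C}_{3,k}$ is $\frac{72\tau_k^2\gamma_k^2B^2}{1+\tau_k}$. Dominating it by $\frac{2\tau_k\beta_k}{1+\tau_k}$ requires $36\tau_k\gamma_k^2B^2\le\beta_k$, i.e.\ $\frac{9\bar\tau B^2}{4L^2}\sqrt{k+3}\le 1$. This fails for all large $k$ whenever $B>0$, however small $\bar\tau$ is. The relation $\tau_k=\bar\tau\sqrt{k+3}\,\beta_k$ you invoke is an identity showing $\tau_k/\beta_k\to\infty$, not $\tau_k\lesssim\beta_k$. So $\mathcal{C}_{3,k}$ is eventually positive, and nothing in your recursion controls the term $\E\|\bz_0-\bz_{k+1/2}\|^2$ it multiplies.

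The relation that matters is $\tau_k^2=\bar\tau^2\beta_k$. This $B^2$ error is of order $\beta_k$, so it must be charged to terms whose negative coefficients are of order at least $\beta_k$, not $\tau_k\beta_k$. The paper does this in \eqref{eq: eon98}, splitting $\|\bz_0-\bz_{k+1/2}\|^2\le 3(\|\bz_0-\bz_{k+1}\|^2+\|\bz_{k+1}-\bz_k\|^2+\|\bz_k-\bz_{k+1/2}\|^2)$. The resulting $O(\bar\tau^2\beta_kB^2/L^2)$ coefficients are absorbed as follows: by the Halpern term $-\frac{\beta_k(1-\tau_k)}{1+\tau_k}\E\|\bz_0-\bz_{k+1}\|^2$ (jointly with the STORM $B^2$ term, which is what $\bar\tau\le\frac{L^2}{219B^2}$ secures), by $\mathcal{C}_{2,k}=-\Theta(1)$, and by $\mathcal{C}_{1,k}=-\Theta(\tau_k)$. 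What remains of $\mathcal{C}_{3,k}$ is $\frac{5\rho\tau_k\beta_k}{2\gamma_k}-\frac{2\tau_k\beta_k}{1+\tau_k}$, which is where $\rho\le\frac{1}{12L}$ in \eqref{eq: sofx4} comes from. You already use this device for the STORM term in your third bullet; it is needed for $\mathcal{C}_{3,k}$ as well.

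There are two smaller issues. First, the $\bz^\star$-split in your third bullet yields a perturbation $\epsilon_k=\Theta(\bar\tau B^2/L^2)\,\beta_k$, which is not summable, contrary to your claim. This is survivable because $\epsilon_k$ is a small fraction of $\beta_k$, so $\Phi_k$ stays bounded and $\sum_k(k+3)\epsilon_k\Phi_k=O(K)$. However, the exact identity $(k+3)(1-\beta_k)=k+2$ you telescope with no longer applies. The paper's route through $\|\bz_0-\bz_{k+1}\|^2$ and $\|\bz_k-\bz_{k+1}\|^2$ avoids this. (Also, $\mathcal{C}_{4,k}=O(\tau_k/L^2)$ since $\rho\lesssim 1/L$, so $c=\Theta(\bar\tau/L^2)$, not $\Theta(\bar\tau/L)$.)

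Second, averaging the STORM recursion separately to control $\E\|\bg_k-G(\bz_k)\|^2$ needs $\sum_{k<K}(k+3)\E\|\bz_{k+1}-\bz_k\|^2=O(K)$. Your final recursion drops this, since you only make the coefficient of $\E\|\bz_k-\bz_{k+1}\|^2$ nonpositive. Either keep a strict margin there, or do as the paper does and take $c=\frac{6\bar\tau(1-\beta_k)}{L^2}$. Then $\mathcal{D}_{4,k}\le-\frac{\tau_k(1-\beta_k)}{32L^2}$, and $\frac{1-\beta_k}{L^2}\|\bg_k-G(\bz_k)\|^2$ is itself a telescoped good term. It is averaged to $O(K^{-1/2})$ together with $(1-\beta_k)\|\bz_k-\bz_{k+1/2}\|^2$ and $\beta_k\|\bz_0-\bz_{k+1/2}\|^2$. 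These three terms give the residual bound directly via $\|\bar\bz_k-\bz_{k+1/2}\|^2\le\beta_k\|\bz_0-\bz_{k+1/2}\|^2+(1-\beta_k)\|\bz_k-\bz_{k+1/2}\|^2$. Your separate uniform-bound step is therefore not needed, though it is correct.
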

\begin{remark}\label{eq: svx3}
As the result in \Cref{sec: mb_main}, the range of $\rho$ is quite restrictive. In particular, as $\bar\tau\to 0$ the dominant term in the upper bound, $\ \frac{1}{16L(1+\bar\tau)} - \frac{2\bar\tau}{17\sqrt{3}L}\left( \frac{9B^2}{L^2}+3\right)$, becomes $\approx\frac{1}{16L}$. 

To show that all the terms in the upper bound of $\rho$ are strictly positive, we require,
\begin{align*}
    \frac{1}{16(1+\bar\tau)} - \frac{2\bar\tau}{17\sqrt{3}}\left( \frac{9B^2}{L^2}+3\right) > 0
    \iff
    \frac{17\sqrt{3}L^2}{288B^2+96L^2} > \bar\tau + \bar\tau^2
\end{align*}
Because $\frac{L^2}{20B^2+7L^2} \geq \bar\tau$, this is satisfied.
It is easy to show that the other arguments in the definition of $\rho$, in \eqref{eq: sofx4}, are strictly positive.

To get the best dependence, one would also have to use the idea of \cite{pethick2023solving} to include another term in the potential function to have a bound for $\rho$ that approaches $\frac{1}{2L}$ as $\bar\tau\to 0$. We do not pursue this here for brevity since the best-known upper bound for $\rho$ (which is $1/L$) is attained in \Cref{sec: mlmc} for another algorithm.
\end{remark}
\begin{proof}[Proof of \Cref{th: vr_supp}]
To upper bound the last term on the right-hand side of the assertion of Lemma~\ref{lem: vr_one_it}, we use the result of Lemma~\ref{lem: vr_storm_lem}. In particular, in Lemma~\ref{lem: vr_storm_lem}, we let $\alpha_k = \frac{2}{\sqrt{k+3}}$ and multiply both sides by $\frac{\bar\tau(1-\beta_k)c_3}{L^2}$ to get
\begin{align*}
&\frac{\bar\tau(1-\beta_k)c_3}{L^2} \E \| \bg_{k+1} - G(\bz_{k+1}) \|^2 \\ &\leq \frac{\bar\tau(1-\beta_k)c_3}{L^2}\left( 1- \frac{1}{\sqrt{k+3}} \right) \E \| \bg_{k} - G(\bz_{k}) \|^2 - \frac{{\bar\tau}(1-\beta_k)c_3\alpha_k}{2L^2} \E\|\bg_k- G(\bz_k)\|^2 \notag \\
&\quad+2{\bar\tau}(1-\beta_k)c_3 \E \| \bz_{k+1} - \bz_k\|^2 \notag \\
&\quad+ \frac{2{\bar\tau}(1-\beta_k)c_3\alpha_k^2B^2}{L^2}\E\|\bz_k-\bz_0\|^2 +\frac{2{\bar\tau}(1-\beta_k)c_3\alpha_k^2\sigma^2}{L^2}.
\end{align*}
Using the identity $(1-\beta_k)\left(1-\frac{1}{\sqrt{k+3}} \right) \leq \frac{k+1}{k+3}$ and Young's inequality on the last inequality gives us that
\begin{align}
&\frac{{\bar\tau}(1-\beta_k)c_3}{L^2} \E \| \bg_{k+1} - G(\bz_{k+1}) \|^2 \notag \\ &\leq \frac{{\bar\tau}c_3(k+1)}{L^2(k+3)} \E \| \bg_{k} - G(\bz_{k}) \|^2 - \frac{{\bar\tau}(1-\beta_k)c_3\alpha_k}{2L^2} \E\|\bg_k- G(\bz_k)\|^2 \notag \\
&\quad+2{\bar\tau}(1-\beta_k)c_3 \E \| \bz_{k+1} - \bz_k\|^2 \notag \\
&\quad+ \frac{4{\bar\tau}(1-\beta_k)c_3\alpha_k^2B^2}{L^2}\left(\E\|\bz_{k+1}-\bz_0\|^2+\E\|\bz_k-\bz_{k+1}\|^2\right) + \frac{2{\bar\tau}(1-\beta_k)c_3\alpha_k^2\sigma^2}{L^2}.\label{eq: eon5}
\end{align}
Let us also note for the sixth term on the right-hand side of \eqref{eq: sfx4} that
\begin{align}
\mathcal{C}_{3,k} \E\| \bz_0-\bz_{k+1/2}\|^2 &= \left(\frac{5\rho\tau_k\beta_k}{2\gamma_k} + \frac{2\tau_k(36\tau_k\gamma_k^2 B^2 -\beta_k)}{1+\tau_k}\right)\E\| \bz_0-\bz_{k+1/2}\|^2 \notag \\
&\leq \left(\frac{5\rho\tau_k\beta_k}{2\gamma_k} - \frac{2\tau_k\beta_k}{1+\tau_k}\right)\E\| \bz_0-\bz_{k+1/2}\|^2\notag \\
&\quad+\frac{3\times 72\tau_k^2\gamma_k^2 B^2}{1+\tau_k}\mathbb{E}\left( \| \bz_0-\bz_{k+1}\|^2 + \| \bz_{k+1} - \bz_k\|^2 + \| \bz_k - \bz_{k+1/2}\|^2 \right).\label{eq: eon98}
\end{align}
Adding \eqref{eq: eon5} to the result of Lemma~\ref{lem: vr_one_it} and using \eqref{eq: eon98}, we obtain
\begin{align}
&\E \| \bz^\star - \bz_{k+1}\|^2 + \frac{{\bar\tau}(1-\beta_k)c_3}{L^2} \E \| \bg_{k+1} - G(\bz_{k+1}) \|^2 \notag \\
&\leq (1-\beta_k) \E\|\bz^\star- \bz_k\|^2+ \frac{{\bar\tau}c_3(k+1)}{L^2(k+3)} \E \| \bg_{k} - G(\bz_{k}) \|^2+\beta_k\|\bz^\star - \bz_0\|^2+ \mathcal{R}_k \notag \\
&\quad + \mathcal{D}_{1,k} \E\|\bz_k-\bz_{k+1/2}\|^2 + \mathcal{D}_{2,k} \E\| \bz_k-\bz_{k+1}\|^2  + \mathcal{D}_{3,k} \E\| \bz_0-\bz_{k+1/2}\|^2 \notag \\  &\quad
+ \mathcal{D}_{4,k} \E\|\bg_k - G(\bz_k)\|^2 + \mathcal{D}_{5,k} \E\|\bz_0-\bz_{k+1}\|^2,\label{eq: qqq4}
\end{align}
where the coefficients are defined as (where we assigned $c_3=6$ for the free variable)
\begin{align*}
    \mathcal{R}_k &= \mathcal{E}_k + \frac{12{\bar\tau}(1-\beta_k)\alpha_k^2\sigma^2}{L^2}, \\
    \mathcal{D}_{1,k} &= \mathcal{C}_{1, k}+\frac{216\tau_k^2\gamma_k^2B^2}{1+\tau_k} , \\
    \mathcal{D}_{2,k} &= \mathcal{C}_{2, k}  + 12{\bar\tau}(1-\beta_k) + \frac{24{\bar\tau}(1-\beta_k)\alpha_k^2B^2}{L^2} +\frac{216\tau_k^2\gamma_k^2B^2}{1+\tau_k}, \\
    \mathcal{D}_{3,k} &= \mathcal{C}_{3,k} - \frac{72\tau_k^2\gamma_k^2 B^2}{1+\tau_k}, \\
    \mathcal{D}_{4,k} &= \mathcal{C}_{4,k}  - \frac{3{\bar\tau}(1-\beta_k)\alpha_k}{L^2}, \\
    \mathcal{D}_{5,k} &= \frac{24{\bar\tau}(1-\beta_k)\alpha_k^2B^2}{L^2} + \frac{216\tau_k^2\gamma_k^2B^2}{1+\tau_k} - \frac{\beta_k(1-\tau_k)}{1+\tau_k}.
\end{align*}
This is, unfortunately, an even more complicated bound than Lemma~\ref{lem: vr_one_it}. One can see that the first two lines of \eqref{eq: qqq4} contain terms that will telescope after minor manipulations. The third line of \eqref{eq: qqq4} contains the term $\mathcal{R}_k$, which scales as $O(\tau_k^2+\alpha_k^2) = O(1/k)$. What remains is to select the parameters such that the last five terms of \eqref{eq: qqq4} will be nonpositive.

We wish to upper bound the sum of the last five terms in \eqref{eq: qqq4} with
\begin{align*}
\Theta(-\tau_k\E[(1-\beta_k) \| \bz_k-\bz_{k+1/2}\|^2 + (1-\beta_k) \| \bg_k - G(\bz_k)\|^2 + \beta_k \| \bz_0 - \bz_{k+1/2}\|^2]).
\end{align*}
Let us recall
\begin{align*}
\alpha_k = \frac{2}{\sqrt{k+3}},~~~ \gamma_k = \frac{1}{4L} ~~~ \tau_k = \frac{{\bar\tau}}{\sqrt{k+3}}.
\end{align*}
Then the last term of \eqref{eq: qqq4} will be nonpositive, that is, $\mathcal{D}_{5, k}\leq 0$, if,
\begin{align*}
\frac{96{\bar\tau}(1-\beta_k)B^2}{(k+3)L^2} + \frac{27{\bar\tau}^2B^2}{2(k+3)(1+\tau_k)L^2} \leq \frac{\beta_k(1-\tau_k)}{1+\tau_k},
\end{align*}
where $\beta_k = \frac{1}{k+3}$, ${\bar\tau}^2\leq {\bar\tau}$ because ${\bar\tau}\leq 1$ and also $1-\tau_k\geq 4/5$ since ${\bar\tau}\leq 1/3$. This will be implied by the definition of $\tau_k$ since
\begin{align*}
{\bar\tau} \leq \frac{L^2}{219B^2}.
\end{align*}
Let us now also note
\begin{equation*}
1-\beta_k \leq 1 \iff \frac{1}{1-\beta_k} \geq 1 \text{~and~} \frac{1-\tau_k}{1+\tau_k} \geq \frac{1}{2} \text{~when~} {\bar\tau} \leq \frac{1}{3} \text{~and~} 1-\tau_k \geq 1/2 \text{~when~} {\bar\tau}\leq 1.
 \end{equation*}
 We estimate the second from last term of \eqref{eq: qqq4}. By using $\mathcal{C}_{4,k}$ from \eqref{eq: c_defs}, we have
 \begin{align*}
 \mathcal{D}_{4, k} &=  110\rho\tau_k\gamma_k +\frac{4\tau_k\gamma_k^2(9+18\tau_k)}{1+\tau_k}  - \frac{3{\bar\tau}(1-\beta_k)\alpha_k}{L^2} \\
 &= 110\rho\tau_k\gamma_k +\frac{4\tau_k\gamma_k^2(9+18\tau_k)}{1+\tau_k}  - \frac{6\tau_k(1-\beta_k)}{L^2}.
 \end{align*} 
We will upper bound this term by $-\frac{\tau_k(1-\beta_k)}{32L^2}$. That is, we have
 \begin{align*}
 \mathcal{D}_{4, k} \leq -\frac{\tau_k(1-\beta_k)}{32L^2} \iff 
 \frac{55\rho}{2L} + \frac{9}{4L^2(1+\tau_k)} + \frac{9{\bar\tau}}{2L^2(1+\tau_k)\sqrt{k+3}} \leq \frac{191(1-\beta_k)}{32L^2},
 \end{align*}
 where for the equivalence, we divided both sides of the inequality by $\tau_k$ and plugged in $\gamma_k$.
 
Since $\frac{1}{1+\tau_k} \leq 1$, $1-\beta_k\geq2/3$, and $\frac{1}{\sqrt{k+3}}\leq 1/\sqrt{3}$, this will be implied by
\begin{align}
\rho \leq \frac{L}{55} \left( \frac{83}{24L^2} - \frac{9{\bar\tau}}{\sqrt{3}L^2} \right).\label{eq: rho_bd1}
\end{align}
We estimate the third from last term of \eqref{eq: qqq4}. We wish to show that $\mathcal{D}_{3,k} \leq -\frac{\tau_k\beta_k}{3(1+\tau_k)}$. By using $\mathcal{C}_{3,k}$ from \eqref{eq: c_defs}, we have
\begin{equation*}
\mathcal{D}_{3, k} = \frac{5\rho\tau_k\beta_k}{2\gamma_k} - \frac{2\tau_k\beta_k}{1+\tau_k} \leq -\frac{\tau_k\beta_k}{3(1+\tau_k)},
\end{equation*}
when
\begin{align}
\rho \leq \frac{1}{12L}.\label{eq: rho_bd2}
\end{align}
We used here that $1+\tau_k \geq 2$ since $\tau_k\leq {\bar\tau}\leq 1$.

We estimate the sixth term in \eqref{eq: qqq4}. We wish to show that $\mathcal{D}_{2, k} \leq 0$. Let us use the definition of $\mathcal{C}_{2, k}$ from \eqref{eq: c_defs} to write
\begin{align*}
\mathcal{D}_{2, k} &= \frac{1}{24(1+\tau_k)} + 12{\bar\tau}(1-\beta_k) + \frac{24{\bar\tau}(1-\beta_k)\alpha_k^2B^2}{L^2} +\frac{216\tau_k^2\gamma_k^2B^2}{1+\tau_k} - \frac{(1-\tau_k)(1-\beta_k)}{1+\tau_k}.
\end{align*}
Then, we have
\begin{align*}
\mathcal{D}_{2,k} \leq 0 \Longleftarrow 12{\bar\tau}(1-\beta_k) + \frac{96{\bar\tau}(1-\beta_k)B^2}{(k+3)L^2} + \frac{27{\bar\tau}^2B^2}{2(k+3)(1+\tau_k)L^2} \leq \frac{59}{120(1+\tau_k)},
\end{align*}
because $1-\beta_k \geq 2/3$, $\frac{1}{1+\tau_k} \geq 1/2$, and $1-\tau_k \geq 4/5$. This is implied by
\begin{align*}
{\bar\tau} \leq \frac{59}{120}\left( \frac{L^2}{73B^2+24L^2} \right).
\end{align*}
This is because $1-\beta_k\leq 1$, $\frac{1}{k+3}\leq\frac{1}{3}$,  ${\bar\tau}\leq 1/3$ (so, ${\bar\tau}^2\leq{\bar\tau}$).

We next estimate the fifth term in \eqref{eq: qqq4}. Let us use the definition of $\mathcal{C}_{1, k}$ from \eqref{eq: c_defs} to write
\begin{align*}
\mathcal{D}_{1, k} &=  \frac{\tau_k(1/4+9/2\gamma_k^2L^2 + 72\tau_k\gamma_k^2 L^2 - 2(1-\beta_k))}{1+\tau_k}+\frac{5\rho\tau_k(1-\beta_k)}{2\gamma_k}+{11\rho\tau_kL^2\gamma_k}+\frac{216\tau_k^2\gamma_k^2B^2}{1+\tau_k}.
\end{align*}
We then have that
\begin{align*}
\mathcal{D}_{1,k} \leq -\frac{\tau_k(1-\beta_k)}{128(1+\tau_k)} &\iff
\frac{1}{4(1+\tau_k)} + 10L\rho(1-\beta_k) + \frac{11\rho L}{4} + \frac{9}{32(1+\tau_k)} \\
&\quad+ \frac{{\bar\tau}}{\sqrt{k+3}(1+\tau_k)}\left( \frac{27B^2}{2L^2}+\frac{9}{2} \right) \leq \frac{255(1-\beta_k)}{128(1+\tau_k)},
\end{align*}
where we divided both sides by $\tau_k$ and used $\gamma_k=\frac{1}{4L}$ in the equivalence step. 

Because $1-\beta_k\geq 2/3$, the previous inequality will be implied by
\begin{align*}
10L\rho(1-\beta_k) + \frac{11\rho L}{4} + \frac{{\bar\tau}}{\sqrt{k+3}(1+\tau_k)}\left( \frac{27B^2}{2L^2}+\frac{9}{2} \right) \leq \frac{51}{64(1+\tau_k)}.
\end{align*}
Then, by using $1-\beta_k\leq 1$, $\sqrt{k+3} \geq \sqrt{3}$, and $\frac{1}{1+\tau_k} \leq 1$, this is true as long as
\begin{align*}
\rho \leq  \frac{1}{16L(1+\tau_k)} - \frac{2{\tau_k}}{17\sqrt{3}L}\left( \frac{9B^2}{L^2}+3 \right),
\end{align*}
which can be easily made independent of $k$ since $\tau_k$ is nonincreasing, that is, this bound of $\rho$ is implied by
\begin{align}
\rho \leq \frac{1}{16L(1+{\bar\tau})} - \frac{2\bar\tau}{17\sqrt{3}L}\left( \frac{9B^2}{L^2}+3\right),\label{eq: rho_bd3}
\end{align}
where the upper bound is guaranteed to be positive due to the definition of $\bar\tau$.

Summarizing the constraints derived from the nonpositivity of $\mathcal{D}_{4,k}$, $\mathcal{D}_{3,k}$, and $\mathcal{D}_{1,k}$, we require the parameter $\rho$ to satisfy the following condition,
\begin{align*}
    \rho \leq \min \Bigg\{ 
    \frac{L}{55} \left( \frac{83}{24L^2} - \frac{9{\bar\tau}}{\sqrt{3}L^2} \right),~~
    \frac{1}{12L}, ~~
    \frac{1}{16L(1+{\bar\tau})} - \frac{2\bar\tau}{17\sqrt{3}L}\left( \frac{9B^2}{L^2}+3\right)
    \Bigg\}.
\end{align*}
where $\bar\tau \leq \min\left\{\frac{L^2}{219B^2}, \frac{L^2}{20B^2+7L^2}\right\}$. 

As $\bar\tau \to 0$, the bound in \eqref{eq: rho_bd1} approaches $\frac{1}{15.9L}$, while the bound in \eqref{eq: rho_bd3} approaches $\frac{1}{16L}$, so this is the bound we have in our
theorem statement for $\bar\tau\to 0$.

With these, we then estimate \eqref{eq: qqq4} as
\begin{align*}
&\E \| \bz^\star - \bz_{k+1}\|^2 + \frac{6{\bar\tau}(1-\beta_k)}{L^2} \E \| \bg_{k+1} - G(\bz_{k+1}) \|^2 \\
&\leq (1-\beta_k) \E\|\bz^\star- \bz_k\|^2+ \frac{6{\bar\tau}(k+1)}{L^2(k+3)} \E \| \bg_{k} - G(\bz_{k}) \|^2+\beta_k\|\bz^\star - \bz_0\|^2 \\
&\quad + \frac{72\tau_k^2\gamma_k^2\sigma^2}{(1+\tau_k)} + \frac{12{\bar\tau}(1-\beta_k)\alpha_k^2\sigma^2}{L^2}\\
&\quad -\frac{\tau_k(1-\beta_k)}{128(1+\tau_k)}\E \|\bz_k-\bz_{k+1/2}\|^2 -\frac{\tau_k\beta_k}{3(1+\tau_k)} \E \| \bz_0-\bz_{k+1/2}\|^2 - \frac{\tau_k(1-\beta_k)}{32L^2} \E \| \bg_k - G(\bz_k)\|^2.
\end{align*}
We now multiply both sides by $k+3$ to obtain
\begin{align*}
&(k+3)\E \| \bz^\star - \bz_{k+1}\|^2 + \frac{6{\bar\tau}(k+2)}{L^2} \E \| \bg_{k+1} - G(\bz_{k+1}) \|^2 \\
&\leq (k+2) \E\|\bz^\star- \bz_k\|^2+ \frac{6{\bar\tau}(k+1)}{L^2} \E \| \bg_{k} - G(\bz_{k}) \|^2+\|\bz^\star - \bz_0\|^2 \\
&\quad + (k+3)\left(\frac{72\tau_k^2\gamma_k^2\sigma^2}{(1+\tau_k)} + \frac{12{\bar\tau}(1-\beta_k)\alpha_k^2\sigma^2}{L^2}\right)\\
&\quad -\frac{\tau_k(k+3)(1-\beta_k)}{128(1+\tau_k)}\E \|\bz_k-\bz_{k+1/2}\|^2-\frac{\tau_k(k+3)\beta_k}{3(1+\tau_k)} \E \| \bz_0-\bz_{k+1/2}\|^2\\
&\quad - \frac{\tau_k(k+3)(1-\beta_k)}{32L^2} \E \| \bg_k - G(\bz_k)\|^2.
\end{align*}
That is, we have for some $\delta\geq\frac{1}{128}$ that
\begin{align*}
&\delta\tau_k(k+3)\E\left[\frac{1-\beta_k}{L^2} \|\bg_k - G(\bz_k)\|^2 + (1-\beta_k)\|\bz_k-\bz_{k+1/2}\|^2 + \beta_k \|\bz_0-\bz_{k+1/2}\|^2\right] \\
&\leq -(k+3)\E \| \bz^\star - \bz_{k+1}\|^2 - \frac{6{\bar\tau}(k+2)}{L^2} \E \| \bg_{k+1} - G(\bz_{k+1}) \|^2 \\
&\quad +(k+2) \E\|\bz^\star- \bz_k\|^2+ \frac{6{\bar\tau}(k+1)}{L^2} \E \| \bg_{k} - G(\bz_{k}) \|^2+ O(1).
\end{align*}
Summing up, using $\sum_{k=0}^K \tau_k(k+3) = \Omega(K^{3/2})$, since $\tau_k=\Theta(1/\sqrt{k})$, we obtain that 
\begin{align*}
&\frac{1}{\sum_{k=0}^K \tau_k(k+3)}\sum_{k=0}^{K-1}\tau_k(k+3)\E\bigg[\frac{1-\beta_k}{L^2} \|\bg_k - G(\bz_k)\|^2 + (1-\beta_k)\|\bz_k-\bz_{k+1/2}\|^2 \\
&\qquad\qquad\qquad\qquad\qquad\qquad\qquad\qquad\qquad\qquad+ \beta_k \|\bz_0-\bz_{k+1/2}\|^2\bigg]= O(K^{-1/2}),
\end{align*}
that is, the left-hand side is smaller than $\varepsilon^2$ after $O(\varepsilon^{-4})$ iterations. The left-hand side of the last inequality can be converted to $\res(\bz_{k+1/2})$ and after using \citep[Lemma 6.2]{alacaoglu2025towards}. With the same idea as \Cref{sec: mb_main} and \citep[Theorem 4.6]{alacaoglu2025towards}, we convert this to a guarantee on a randomly selected iterate.
\end{proof}

\newpage
\section{Details for Section \ref{sec: numer} and Further Numerical Results}\label{app: exper}

\subsection{Additional experiment}

\begin{figure}[h] 
    \centering 
    \subfigure{%
    \includegraphics[width=0.32\linewidth]{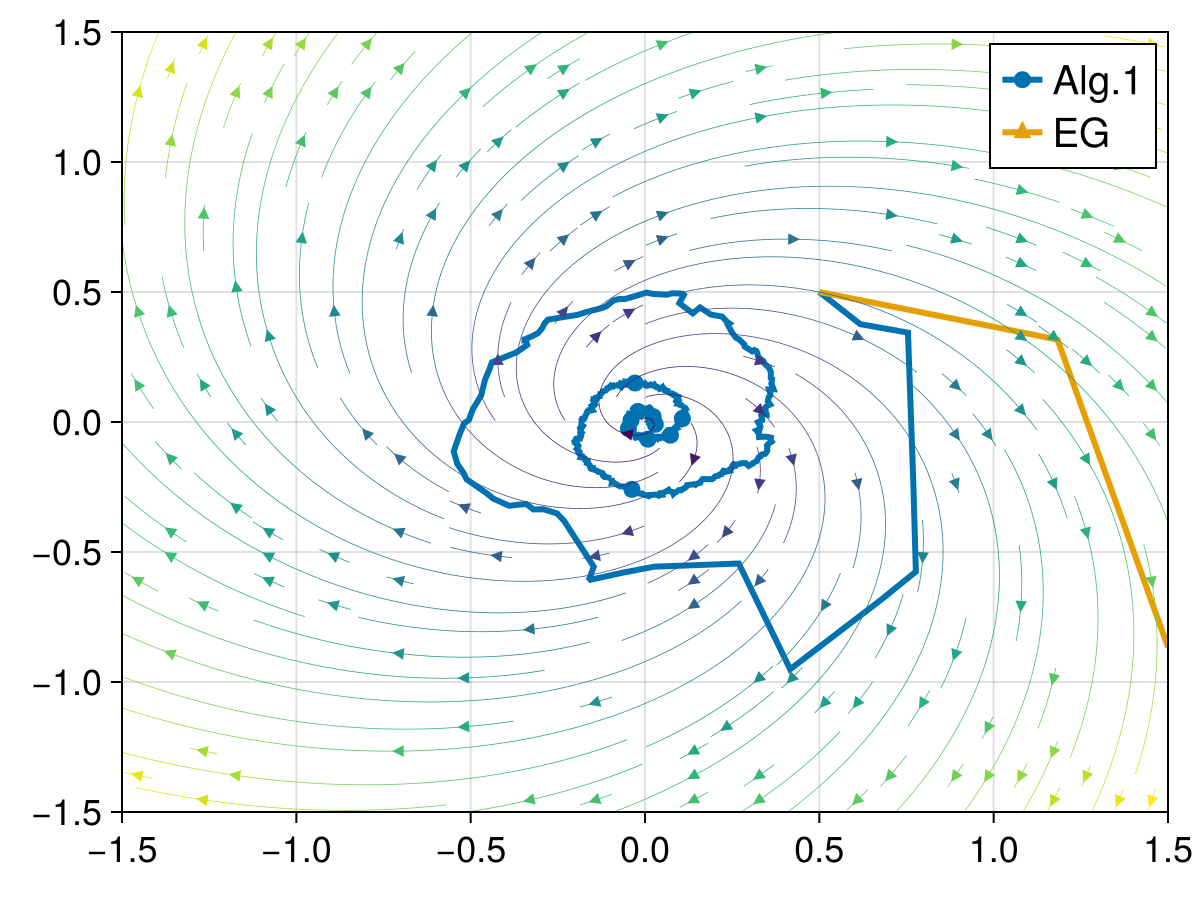}}\hfill 
    \subfigure{%
    \includegraphics[width=0.32\linewidth]{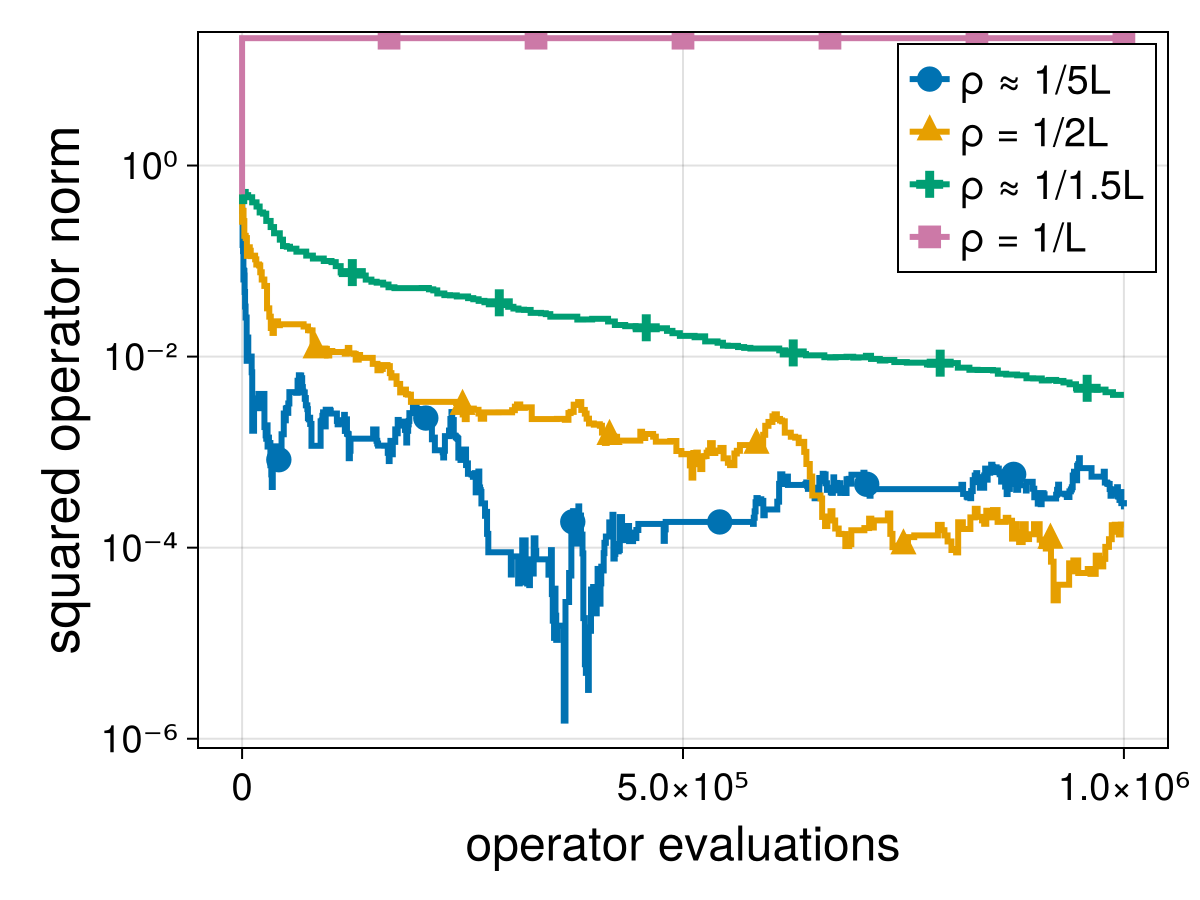}}\hfill 
    \subfigure{%
    \includegraphics[width=0.32\linewidth]{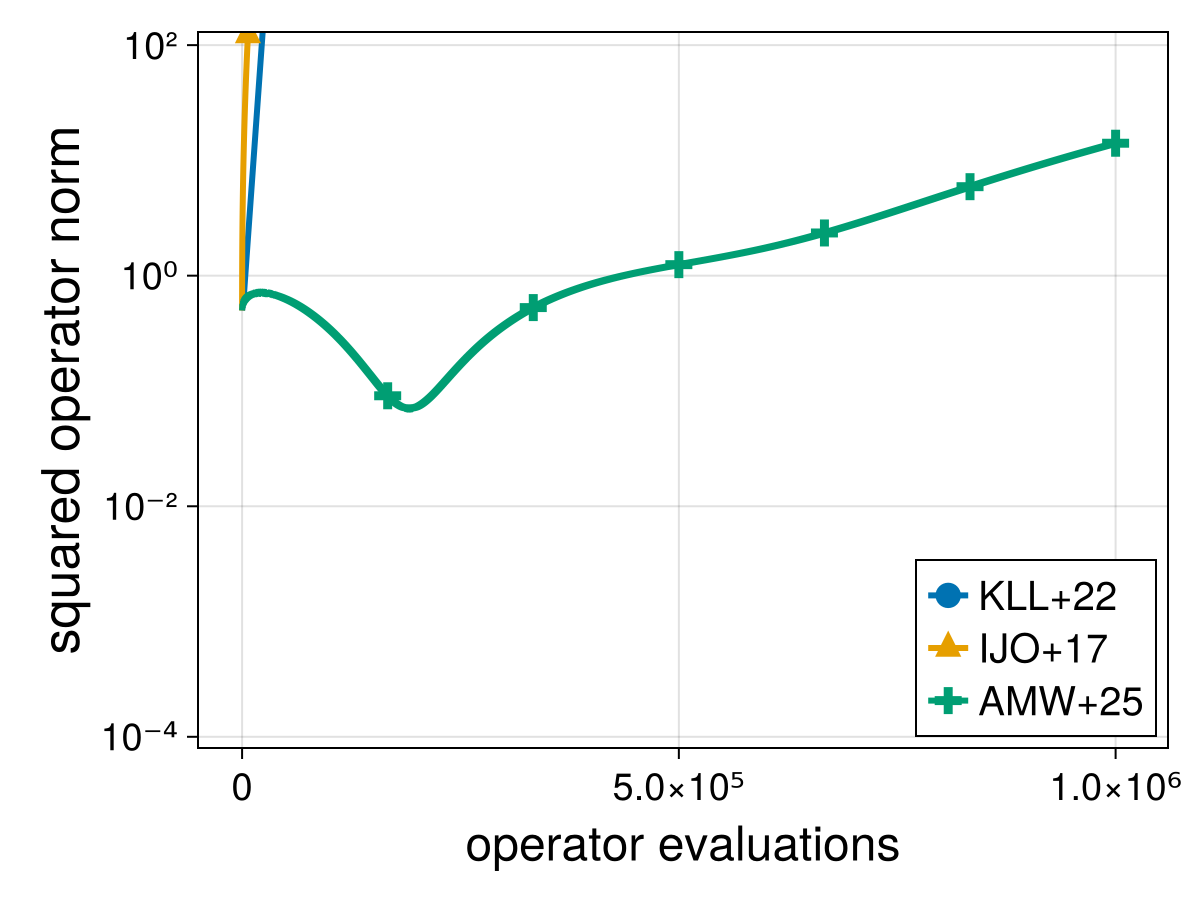}}
    \caption{\small 
    Left: Trajectories of Alg.1 and EG for the LAx counter-example (see \eqref{eq: def_rot}). 
    Middle: Alg.1 for counter-example with varying $\rho$. 
    Right: Methods from \cite{kotsalis2022simpleii}, \cite{iusem2017extragradient}, \cite{alacaoglu2025towards} in \Cref{tab:constraints} for the LAx counter-example.
    \emph{\small Middle and Right panel has log-scaled $y$-axis.}} 
\label{fig:lax-traj-regime} 
\end{figure}

We use the LAx counter-example instance from \citep[Theorem 4.3]{gorbunov2023convergence}. For $L > 0$ define
\begin{align*}
    F(x) = LAx, \qquad x\in\mathbb{R}^2,
\end{align*}
where $A$ is the rotation matrix
\begin{align}\label{eq: def_rot}
A =
\begin{pmatrix}
    \cos\theta & -\sin\theta\\
    \sin\theta & \cos\theta
\end{pmatrix},
\qquad \rho = -\tfrac{\cos\theta}{L}.
\end{align}

(Fig. \ref{fig:lax-traj-regime}, \emph{Left}) We use the same setup as the left panel of Fig.~\ref{fig:lax-gamma-png} fixing $\rho = \tfrac{1}{2L}$ to match the residual plot and keep the initialization and stopping criterion identical. In this setting, EG spirals outward and diverges as predicted by theory \citep{gorbunov2023convergence}, whereas \Cref{alg:weakmvi_stoc} converges to the solution and stabilizes. 

(Fig. \ref{fig:lax-traj-regime}, \emph{Middle}) On the same LAx instance, we examine stability as a function of the rotation angle $\theta$. We report the operator norm $\|F(z)\|^2$ on a log-scaled $y$-axis. Over the tested grid of $\theta$, \Cref{alg:weakmvi_stoc} is stable for $\rho < \tfrac{1}{L}$ and becomes unstable at $\rho = \tfrac{1}{L}$, confirming a stability boundary near $\tfrac{1}{L}$ for this instance, confirming our theoretical results.

(Fig. \ref{fig:lax-traj-regime}, \emph{Right}) For a fixed instance with $\rho=\tfrac{1}{2L}$, we also run the three method in \Cref{tab:constraints} that require $\rho = 0$ (\cite{kotsalis2022simpleii}, \cite{iusem2017extragradient}, \cite{alacaoglu2025towards}) using the same initialization where the noise in the stochastic gradient have the Gaussian distribution. In this setting, all three baselines become unstable and diverge since their theory only covers $\rho=0$.

\begin{figure}[h]
  \centering
  \includegraphics[width=0.42\linewidth]{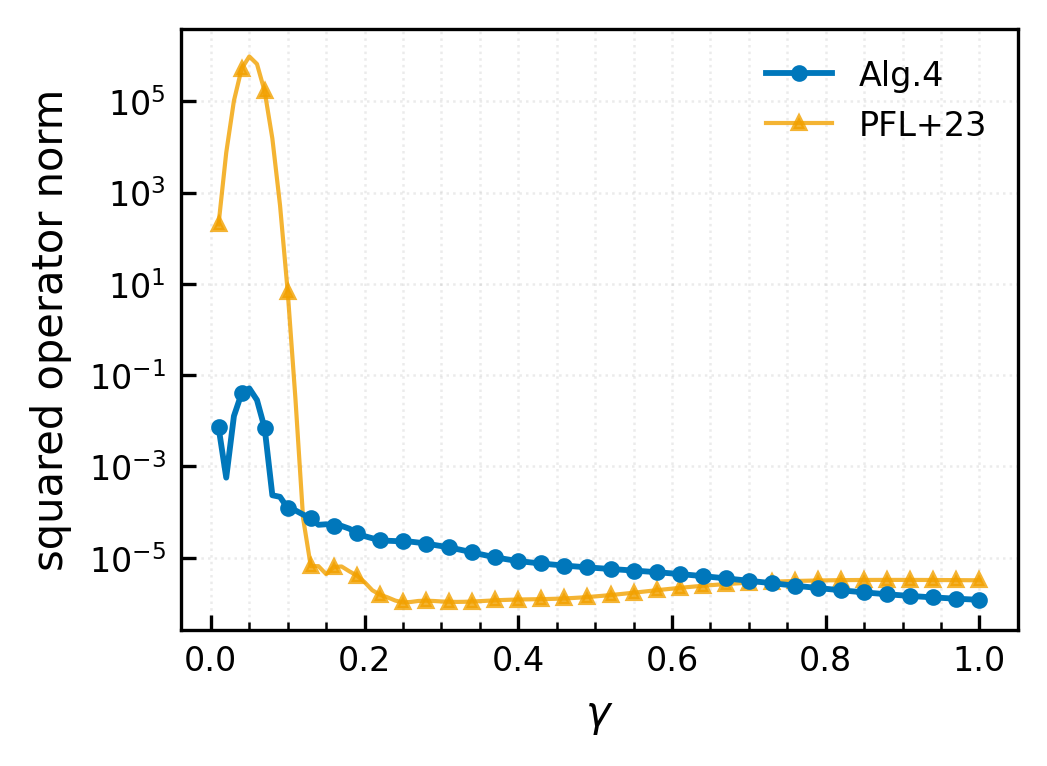}
  \caption{\small Alg. 4 and the algorithm of \cite{pethick2023solving} for the unconstrained problem in \eqref{eq: uq_prob} with Gaussian noise. \emph{Panel has log-scaled $y$-axis.}} 
  \label{fig:gauss-example2}
\end{figure}

In Fig. \ref{fig:gauss-example2}, and Fig.~\ref{fig:lax-gamma-png} (middle/right), we use the unconstrained quadratic problem from \citep[Example 2]{pethick2023solving},
\begin{equation}\label{eq: uq_prob}
    \min_{x\in\mathbb{R}} \max_{y\in\mathbb{R}} \phi(x, y) := axy + \frac{b}{2}x^2-\frac{b}{2} y^2
\end{equation}
where $a = \sqrt{L^2-L^4\rho^2}$ and $b = L^2\rho$. In our experiments, we set $L =1$ and $\rho = \frac{1}{10L}$. For Fig. \ref{fig:gauss-example2}, we replace the heavy-tailed noise (from Fig.~\ref{fig:lax-gamma-png} (middle/right)) with zero-mean Gaussian noise and keep the same $\gamma$ grid and seven-seed, reporting the mean of the last iterate. We reuse the \Cref{alg: var_red_fbf} schedule fixed in the main text. Both methods behave similarly for moderate $\gamma$; as it decreases, the method of \cite{pethick2023solving} loses stability and diverges, whereas \Cref{alg: var_red_fbf} remains stable.

\subsection{Experiment Details and Hyperparameters}

For the LAx counter-example with \Cref{alg:weakmvi_stoc} we tune the $\alpha_k$ schedule by sweeping the multiplier $c$ in $\alpha_k = \frac{c\,\alpha}{\sqrt{k+2}\,\log(k+3)}$, increasing $c$ from $1.0$ and enforcing $\alpha_0 < 1$. For the \cref{alg:fbf_mlmc} inner loop, we run a coarse-to-fine search over a single scaling coefficient that determines the per-iteration budgets $(N_k, M_k)$, targeting the smallest sample sizes that preserve the observed accuracy. Fig.~\ref{fig:lax-gamma-png} (left) and Fig.~\ref{fig:lax-traj-regime} (left) correspond to the same run. 

For Fig.~\ref{fig:lax-traj-regime} (middle) we reuse the $N_k, M_k$ value from the example above, for  \cref{alg:weakmvi_stoc}. We retune only the $\alpha_k$ multiplier $c$ using the same sweep and selection rule. At the boundary $\rho=\tfrac{1}{L}$, $\eta=\frac{1}{L}$, we need $\alpha=1-\tfrac{\rho}{\eta}=0$, so we set $\alpha=10^{-3}$ to initialize the $(N_k, M_k)$ budgets (since they depend on $\alpha$), but \cref{alg:weakmvi_stoc} still diverges at this threshold. Fig.~\ref{fig:lax-traj-regime} (right) For each method, we use the hyperparameter setting prescribed by its theory and show the method becomes unstable since $\rho\neq 0$.

Similar to \cref{alg:weakmvi_stoc}, we tune the $\alpha_k$ schedule for \cref{alg: var_red_fbf} with a coarse-to-fine grid over the initialization $\alpha_0$ and the decrease factor $c$, that is $\alpha_k = \frac{\alpha_0}{\sqrt{k/c+1}}$ in \cref{alg: var_red_fbf}. We first run a broad scan over $\alpha_0$ to identify a stable region, then perform a focused search around the best area to select the final setting.  The chosen $\alpha_0$ and $c$ are frozen for the full $\gamma$ sweep and reused across noise models. With this schedule, \cref{alg: var_red_fbf} remains stable as $\gamma$ decreases, while \cite{pethick2023solving} diverges.

\subsection{Computing infrastructure}

All experiments are ran locally on a MacBook Pro (Apple M2 Pro, 10-core CPU; macOS, arm64). We used Julia 1.10.5 and Python 3.8.20. No GPU acceleration was used; all results are CPU-only. We fixed pseudorandom seeds for each run and logged hyperparameters and metrics for reproducibility. Environment files  are included in the supplement.

\subsection{Code bases and modifications}

\emph{Julia (LAx / Alg. 1)}
We build on an open-source Julia package from 
\cite{alacaoglu2023beyond} available at \url{https://github.com/AxelBohm/beyond_golden_ratio.git}. While keeping its original structure, we implemented LAx counter-example problem and our \cref{alg:weakmvi_stoc}

\emph{Python (Example~2 / Alg. 4)}
We adapt our code-base from \cite{pethick2023solving} and its repository at \url{https://github.com/LIONS-EPFL/stochastic-weak-minty-code.git}. We use their method, labeled as PFL+23 with no change, and implemented \cref{alg: var_red_fbf} using their primitives in the codebase.

\end{document}